\let\div\@undefined
\DeclareMathOperator{\div}{div}
\DeclareMathOperator{\diam}{diam}
\DeclareMathOperator{\Lip}{Lip}
\DeclareMathOperator{\Lipb}{Lip_b}
\newcommand{\cointerval}[1]{[#1)}
\newcommand{\abs}[1]{\left\vert#1\right\vert}
\newcommand{\norm}[1]{\Vert#1\Vert}
\newcommand{\set}[1]{\left\{#1\right\}}
\newcommand{\Morrey}[1]{\norm{#1}_{\mathcal{M}^1}}
\newcommand{\Xden}{\mathscr{C}}
\newcommand{\Xgeo}{\mathscr{G}}
\newcommand{\Xlsi}{\mathscr{L}}
\newcommand{\nsmallpieces}{m}
\newcommand{\length}{\ell}
\newenvironment{breakablealgorithm}
  {
   \begin{center}
     \refstepcounter{algorithm}
     \hrule height.8pt depth0pt \kern2pt
     \renewcommand{\caption}[2][\relax]{
       {\raggedright\textbf{\fname@algorithm~\thealgorithm} ##2\par}%
       \ifx\relax##1\relax 
         \addcontentsline{loa}{algorithm}{\protect\numberline{\thealgorithm}##2}%
       \else 
         \addcontentsline{loa}{algorithm}{\protect\numberline{\thealgorithm}##1}%
       \fi
       \kern2pt\hrule\kern2pt
     }
  }{
     \kern2pt\hrule\relax
   \end{center}
  }
\newtheorem{theorem}{Theorem}[section]
\newtheorem{lemma}[theorem]{Lemma}
\newtheorem{corollary}[theorem]{Corollary}
\theoremstyle{definition}
\newtheorem{definition}[theorem]{Definition}
\theoremstyle{remark}
\newtheorem{remark}[theorem]{Remark}
\numberwithin{equation}{section}
\let\epsilon\varepsilon 
\begin{document}

\title[Atomic decomposition of metric currents]{An atomic decomposition of one-dimensional metric currents without boundary}


\author{}
\address{}
\curraddr{}
\email{}
\thanks{}

\author{}
\address{}
\curraddr{}
\email{}
\thanks{}

\author[Y.-W.~B. Chen]{You-Wei Benson Chen}
\address[Y.-W.~B. Chen]{Department of Mathematics, National Taiwan University, Taipei 10617, R.O.C.}
\email[Y.-W.~B. Chen]{bensonchen@ntu.edu.tw}
\thanks{} 

\author[J. Goodman]{Jesse Goodman}
\address[J. Goodman]{Department of Statistics, University of Auckland, Private Bag 92019, Auckland 1142, New Zealand}
\email[J. Goodman]{jesse.goodman@auckland.ac.nz}
\thanks{} 

\author[F. Hernandez]{Felipe Hernandez}
\address[F. Hernandez]{Department of Mathematics, MIT, 182 Memorial Dr, Cambridge, MA 02139, USA}
\email[F. Hernandez]{felipeh@mit.edu}

\author[D. Spector]{Daniel Spector}
\address[D. Spector]{Department of Mathematics, National Taiwan Normal University, No. 88, Section 4, Tingzhou Road, Wenshan District, Taipei City, Taiwan 116, R.O.C.
\newline
and
\newline
National Center for Theoretical Sciences\\No. 1 Sec. 4 Roosevelt Rd., National Taiwan
University\\Taipei, 106, Taiwan
\newline
and
\newline
Department of Mathematics, University of Pittsburgh, Pittsburgh, PA 15261 USA
}
\email[D. Spector]{spectda@gapps.ntnu.edu.tw}

\subjclass[2020]{Primary 	58A25, 53C65, 54E35}

\date{}

\dedicatory{}

\commby{}

\begin{abstract}
  This paper proves an atomic decomposition of the space of $1$-dimensional metric currents without boundary, in which the atoms are specified by closed Lipschitz curves with uniform control on their Morrey norms.
Our argument relies on a geometric construction which states that for any $\epsilon>0$ one can express a piecewise-geodesic closed curve as the sum of piecewise-geodesic closed curves whose total length is at most $(1+\epsilon)$ times the original length and whose Morrey norms are each bounded by a universal constant times $\epsilon^{-2}$.  
In Euclidean space, our results refine the state of the art, providing an approximation of divergence free measures by limits of sums of closed polygonal paths whose total length can be made arbitrarily close to the norm of the approximated measure.
\end{abstract}

\maketitle

\section{Introduction}

Function spaces emerge readily in the modeling of physical phenomena via the calculus of variations and partial differential equations.  
A question of fundamental importance in understanding these equations, energies, and the underlying phenomena is then the analysis of these function spaces into their basic building blocks -- atoms -- as well as the synthesis of general elements of these spaces -- atomic decompositions.  
Typically, an atomic decomposition is built around three basic conditions:
\begin{enumerate}
\item Support Condition -- an atom is zero outside a set from a standard class;
\item Size Condition -- some norm of an atom is bounded in terms of its support;
\item Cancellation Condition -- an atom integrates to zero against a specified class of test functions.
\end{enumerate}
This paradigm was first employed for Hardy spaces in \cites{Coifman, GarnettLatter,Latter,LatterUchiyama}, and has been extended to other function spaces, e.g.\ Besov and Tribel-Lizorkin spaces \cites{AH, Grafakos}.  
As noted by Coifman and Weiss \cite{CoifmanWeiss}*{p.~574}, atomic decompositions often simplify existing proofs, allow easy proofs of new results, and enable the definition of these spaces to be extended to new situations with theories as useful and powerful as the original ones.  

In this paper we establish an atomic decomposition for the space of $1$-dimensional metric currents without boundary.   
The atoms in our setting are specified by closed Lipschitz curves, which can be canonically mapped into the space of $1$-dimensional metric currents, which we abbreviate as $1$-currents in the sequel.  
The support condition reflects the fact that the resulting $1$-current is supported in the image of the curve.  
The size condition involves a bound on the atoms in a Morrey space.  
The cancellation condition reflects the fact that a closed Lipschitz curve maps to a $1$-current without boundary.  

In the Euclidean setting, $1$-currents can be identified with vector-valued measures on $\mathbb{R}^n$, acting on vector-valued functions $\omega \in C_c(\mathbb{R}^n;\mathbb{R}^n)$ via
\begin{align*}
  T(\omega) = \int_{\mathbb{R}^n} \omega\cdot dT
  .
\end{align*}
In this identification, $1$-currents without boundary correspond to divergence free vector-valued measures, because
\begin{align*}
\partial T(\phi) := T(d\phi) = \int_{\mathbb{R}^n} \nabla \phi \cdot dT= -\langle\operatorname*{div}T,\phi\rangle
\end{align*}
for all $\phi \in C^1_c(\mathbb{R}^n)$.  
The analogue of our atomic decomposition in this setting was established in \cite{HS}.

\subsection{Main result}\label{MainResultSubsect}

We begin by recalling some notation which is further defined in \Cref{differentialforms}.
For $E$ a complete, separable, geodesic metric space, we denote by $\mathcal{D}^1(E),\mathcal{M}_1(E)$ the space of $1$-forms and $1$-dimensional metric currents on $E$, respectively, in the sense of Ambrosio and Kirchheim \cite{AmbrosioKirchheim}.  
For a 1-current $T \in \mathcal{M}_1(E)$, we denote by $\partial T$ its boundary and $\mathbb{M}(T)$ its mass.

Given a Lipschitz curve $\gamma:[0,l]\to E$, we denote by $[[\gamma]]$ the $1$-dimensional metric current associated to this curve.
Note that if $\gamma,\tilde{\gamma}$ are the same curve traversed in opposite directions then $[[\gamma]]+[[\tilde{\gamma}]]=0$.
We write $\length(\gamma)$ for the length of $\gamma$.
For a curve $\gamma\colon [0,\length(\gamma)]\to E$ parametrized by arc length, we write
\begin{equation}\label{BallGrowthConstant}
  \Morrey{\gamma}= \sup_{r>0, x\in E} \frac{|\set{t\in[0,\length(\gamma)]\colon d(x,\gamma(t))\leq r}|}{r}
\end{equation}
for the Morrey norm of the curve.  

We require in addition that $E$ should be a geodesic metric space in the sense that for every pair of points $x,y\in E$, there exists a length-minimizing geodesic with endpoints $x,y$.

Our main result gives an atomic decomposition for a metric current in terms of collections of curves.
\begin{theorem}\label{approximationMetric}
  There is a universal constant $C$ such that if $E$ is a complete, separable, geodesic metric space, $T \in \mathcal{M}_1(E)$ satisfies $\partial T=0$, and $0<\epsilon<1$, then there exist piecewise-geodesic closed curves $\gamma_{i,n}$ and scalars $\lambda_{i,n} \geq 0$, $n \in \mathbb{N}$, $1\leq i\leq n$, such that 
  \begin{gather}
    T(\omega)= \lim_{n \to \infty} \sum_{i=1}^n \lambda_{i,n} \frac{[[\gamma_{i,n}]](\omega)}{\length(\gamma_{i,n})} 
    \quad\text{for all $\omega\in \mathcal{D}^1(E)$}, \label{limit}
    \\
    \sum_{i=1}^n\abs{\lambda_{i,n}} \leq (1+\epsilon) \mathbb{M}(T)
    \quad\text{for all $n$, and} \label{mass}
    \\
    \Morrey{\gamma_{i,n}} \leq \frac{C}{\epsilon^2}
    \quad\text{for all $i,n$}. \label{morrey_bound}
  \end{gather}
\end{theorem}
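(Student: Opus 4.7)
The plan is to combine a Smirnov-type decomposition of boundaryless $1$-currents with the geometric construction described in the abstract, together with a piecewise-geodesic approximation step. In broad strokes, I would first write $T$ as a finite weighted approximation of an integral over closed Lipschitz curves; then replace each curve by a piecewise-geodesic one; and finally apply the geometric decomposition to each piecewise-geodesic closed curve to obtain the Morrey bound.

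For the first step, since $\partial T=0$ and $T$ has finite mass, the Paolini--Stepanov extension of Smirnov's decomposition theorem to metric currents realizes $T$ as a superposition $T=\int [[\gamma]]\,d\mu(\gamma)$, where $\mu$ is a finite Borel measure on the space of closed Lipschitz curves parametrized by arc length, satisfying $\int \length(\gamma)\,d\mu(\gamma)=\mathbb{M}(T)$. Discretizing $\mu$ gives a finite sum $\sum_j \alpha_j [[\eta_j]]$ that is close to $T$ in the weak sense on any fixed countable collection of test forms and satisfies $\sum_j \alpha_j \length(\eta_j)\le (1+\epsilon/3)\mathbb{M}(T)$. Next, each $\eta_j$ is replaced by a piecewise-geodesic closed curve $\tilde\eta_j$ obtained by sampling on a sufficiently fine partition and connecting consecutive samples by length-minimizing geodesics (possible since $E$ is geodesic), chosen so that $\length(\tilde\eta_j)\le (1+\epsilon/3)\length(\eta_j)$ and $[[\tilde\eta_j]]$ is uniformly close to $[[\eta_j]]$ on the prescribed test forms.

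The essential step is then the application of the geometric construction from the abstract to each $\tilde\eta_j$: it produces piecewise-geodesic closed subcurves $\xi_{j,k}$ with $[[\tilde\eta_j]]=\sum_k [[\xi_{j,k}]]$, with $\sum_k \length(\xi_{j,k})\le (1+\epsilon/3)\length(\tilde\eta_j)$, and with $\Morrey{\xi_{j,k}}\le C/\epsilon^2$ for every $k$. Re-indexing the family $\{\xi_{j,k}\}$ as $\{\gamma_{i,n}\}$ along a sequence of increasingly fine discretizations, and setting $\lambda_{i,n}=\alpha_j\length(\xi_{j,k})$ so that $\lambda_{i,n}[[\gamma_{i,n}]]/\length(\gamma_{i,n})=\alpha_j[[\xi_{j,k}]]$, I would obtain \eqref{mass} and \eqref{morrey_bound} after absorbing the three $(1+\epsilon/3)$ factors by working with a slightly smaller initial parameter; the weak limit \eqref{limit} follows from a standard diagonal argument along the mesh refinements combined with density in the choice of test forms.

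The main obstacle I anticipate is the geometric decomposition itself. The natural strategy is to examine the ball-growth function $r\mapsto |\{t\colon d(x,\gamma(t))\le r\}|/r$ and, whenever it exceeds $C/\epsilon^2$ at some pair $(x,r)$, to excise a closed subcurve by joining two well-separated visit times by a short geodesic chord; iterating this surgery should produce pieces that are Morrey-controlled. The delicate point is to balance the extra length introduced by each chord against the length gained by excision, which requires a stopping-time or covering argument on the ball-growth profile and is the source of the $\epsilon^{-2}$ dependence in the Morrey bound.
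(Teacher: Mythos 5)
Your outline matches the paper's architecture—Paolini--Stepanov decomposition, piecewise-geodesic sampling, and the Surgery Lemma—but it contains a genuine gap in the very first step. You assert that Paolini--Stepanov realizes a boundaryless $T$ as a superposition $T=\int[[\gamma]]\,d\mu(\gamma)$ over a measure $\mu$ on \emph{closed} Lipschitz curves with $\int\length(\gamma)\,d\mu(\gamma)=\mathbb{M}(T)$. This is not what their theorem says. The measure $\overline{\eta}_1$ in \cite{PS}*{Theorem~3.1} lives on $\Theta_1(E)$, the set of Lipschitz curves of mass~$1$, and these curves are generally \emph{not} closed. The relation $\partial T=0$ is encoded only ``on average'': the two identities in \eqref{Paophi} say that the push-forward of $\overline{\eta}_1$ under the starting-point map and under the ending-point map both coincide with $\mu_T$. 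No individual curve in the support need be a loop, and if you try to treat them as loops your formula $[[\eta_j]]$ will have a boundary, so the partial sums will not converge to a boundaryless current.

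Converting this ``closed on average'' statement into an honest approximation by closed curves is precisely the content of \Cref{bbassertion}, and it is not a triviality. The paper's route is to (i) pass to the concatenated measures $\overline{\eta}_l$ on curves of length $l$; (ii) apply the Strong Law of Large Numbers to extract representative finite families $\gamma_{i,l}$; (iii) replace these by geodesic samplings; and (iv) close each $\gamma_{i,l}$ with a geodesic chord between its endpoints. The essential point you are missing is why step (iv) costs only $o(l)$ in total length: because the start- and end-point distributions both equal the \emph{finite} measure $\mu_T$, the endpoints concentrate (with $\overline{\eta}_l$-probability near~$1$) in a fixed ball independent of $l$, so the closing chord has bounded length for most curves, and the average closing length divided by $l$ tends to $0$ as $l\to\infty$ (equation \eqref{ExtraLengthSmall}). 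Without this argument you cannot control \eqref{mass}: in your discretization the lengths $\length(\eta_j)$ would have to include the chord lengths, and there is no a priori bound showing these sum to $\epsilon\,\mathbb{M}(T)$ rather than something comparable to $\mathbb{M}(T)$ itself. Once \Cref{bbassertion} is in hand, the remainder of your plan—apply \Cref{surgery-eta} to each closed curve, reindex, and diagonalize—matches the paper's proof of \Cref{approximationMetric}, and your heuristic description of the Surgery Lemma's proof (excising sub-loops where the ball-growth ratio is too large, balancing chord cost against excised length) is consistent in spirit with the paper's two-cut-type algorithm, though the paper's actual bookkeeping via the quantities $\beta(\gamma)$ and $\nsmallpieces(\gamma,\delta)$ is more involved than a single stopping-time argument.
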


We note that $\mathbb{M}([[\gamma]])\leq \length(\gamma)$, see \Cref{easyestimateformass}, so that the left-hand side of \eqref{mass} is an upper bound for the mass of the sum in \eqref{limit} and therefore
\begin{equation*}
  \mathbb{M}(T) \leq \liminf_{n \to \infty} \sum_{i=1}^n\abs{\lambda_{i,n}} \leq \limsup_{n \to \infty} \sum_{i=1}^n\abs{\lambda_{i,n}} \leq (1+\epsilon) \mathbb{M}(T) 
  .
\end{equation*}

\subsection{Discussion}
\subsubsection{The atoms of the decomposition}
If we interpret \Cref{approximationMetric} in the atom-style formulation, the atoms are 
\begin{equation*}
  a_{i,n}= \frac{[[\gamma_{i,n}]]}{\length(\gamma_{i,n})}
  .
\end{equation*}
These atoms $a_{i,n} \in \mathcal{M}_1(E)$ satisfy forms of the classical support, size, and cancellation conditions:
\begin{enumerate}[ref=(\arabic*)]
  \item\label{SupportCond}
  The support of $a_{i,n}$ is the image of the curve $\gamma_{i,n}$.
  \item\label{SizeCond}
  The size of the atom in the Morrey norm is bounded in terms of the length of its support, i.e.
  \begin{align*}
    \Morrey{a_{i,n}} \leq (C/\epsilon^2) \length(\gamma_{i,n})^{-1}
    .
  \end{align*}
  \item\label{CancellationCond}
  The atoms satisfy the cancellation condition
  \begin{align*}
    a_{i,n}(\omega)&=0  \text{ for all } \omega=d\pi \text{ for } \pi\in\mathcal{D}^1(E)
    .
  \end{align*}
\end{enumerate}
In addition, the atoms satisfy an additional normalization condition:
\begin{enumerate}[resume, ref=(\arabic*)]
  \item\label{NormalizationCond}
  The atoms $a_{i,n}$ are normalized so that $\mathbb{M}(a_{i,n})\leq 1$. 
\end{enumerate}
We remark that \ref{NormalizationCond} is not a size condition in the usual sense, as the inequality does not involve the size of the support.  
However, this condition ensures that if $T$ is any limit of the form
\begin{align}\label{decomposition_by_atoms}
  T = \lim_{n \to \infty} \sum_{i=1}^n \lambda_{i,n} a_{i,n}
  ,
\end{align}
where as in \Cref{approximationMetric} the convergence is pointwise on $\mathcal{D}^1(E)$, then $T$ defines an element $T \in \mathcal{M}_1(E)$.  
In particular, it follows that the atomic decomposition from \Cref{approximationMetric} characterizes the space $\set{T\in\mathcal{M}_1(E)\colon \partial T=0}$ of 1-currents without boundary.

\begin{remark}
  The convergence \eqref{decomposition_by_atoms} contrasts with a common idea of an atomic decomposition, to represent
  \begin{align}\label{classical_atomic_decomposition}
    T=\sum_{i=1}^\infty \lambda_i a_i.
  \end{align}
  It is not possible in general to write $T \in \mathcal{M}_1(E)$ in this way with $a_i$ supported on Lipschitz curves.  
  Indeed, if the $\{\lambda_i\}_{i\in \mathbb{N}}$ are a fixed absolutely summable sequence and $\{a_i\}_{i \in \mathbb{N}}$ satisfy $\mathbb{M}(a_{i})\leq 1$, then the series on the right-hand side of \eqref{classical_atomic_decomposition} converges in norm in the space of $1$-currents. 
  In particular, any such limit $T$ is supported on a one-dimensional set.  
\end{remark}  

\subsubsection{Interpreting the Morrey norm}
Measuring the size of atoms in terms of the Morrey norm is unfamiliar, but nonetheless natural.  
For a fixed curve $\gamma$, having a finite Morrey norm \eqref{BallGrowthConstant} means that the length of the portion of $\gamma$ in a ball grows no faster than a fixed constant times the radius of the ball.  
Intuitively, this quantifies the idea that $\gamma$ is genuinely one-dimensional:  for instance, the length measure of such a curve $\gamma$ -- the image of Lebesgue measure under the mapping $\gamma$ when parametrized by arc length -- is Ahlfors-David $1$-regular.  
The significance of the bound \eqref{morrey_bound} is to extend this  quantitative control uniformly over the curves $\{\gamma_{i,n}\}$ in \Cref{approximationMetric}.  
This uniformity is crucial in the proofs from \cites{HS,HRS} of Sobolev embeddings and estimates for elliptic PDE for the space of divergence free measures in the Euclidean setting.  
Our \Cref{approximationMetric} paves the way for the study of analogous PDE on a Riemannian manifold, which is the subject of a forthcoming work. 

Note that
\begin{align*}
  \operatorname*{diam} \gamma \leq  \length(\gamma) \leq \frac{1}{2} \Morrey{\gamma} \operatorname*{diam} \gamma
  .
\end{align*}
Thus the size condition \ref{SizeCond} implies a condition that more closely resembles that for classical atoms:
\begin{align*}\tag{$2'$}\label{SizePrime}
  \Morrey{a_{i,n}} \leq C' (\operatorname*{diam} \operatorname*{supp} a_{i,n})^{-1}
  .
\end{align*}
When the uniform Morrey bound \eqref{morrey_bound} is in force, this form of the bound is equivalent up to constants to the size condition \ref{SizeCond}.

\subsubsection{A comparison with classical atoms and dimension stability}

In the Euclidean setting, the support, size, and cancellation conditions can be expressed differently to give a more plain comparison with classical atoms.  
In particular, for any atom $a$ supported on a piecewise geodesic curve $\gamma$, there exists a ball $B$ containing $\operatorname*{supp} \gamma$ for which $\operatorname*{diam} B \approx \operatorname*{diam} \gamma$. 
Using $B$, we can formulate weaker analogues of the conditions \ref{SupportCond}, \ref{SizeCond}/\eqref{SizePrime}, \ref{CancellationCond} and \ref{NormalizationCond} without reference to $\gamma$:
\begin{align*}
  \tag{$1'$}\label{SupportPrime}
  \operatorname*{supp} a &\subset B
  ;
  \\
  \tag{$2''$}\label{SizeDoublePrime}
  \Morrey{a} &\leq \frac{C''}{\operatorname*{diam} B}
  ;
  \\
  \tag{$3'$}\label{CancellationPrime}
  \int_{B} \nabla \varphi \cdot da &= 0 
  \quad\text{for all $\varphi \in C^1_c(\mathbb{R}^n;\mathbb{R}^n)$}
  ;
  \\
  \tag{$4'$}\label{NormalizationPrime}
  \|a\|_{M_b} &\leq 1
  .
\end{align*}
Even without assuming that the atom $a$ is given by a curve $\gamma$, the weaker cancellation condition \eqref{CancellationPrime} still implies that $a$ is divergence free.  In particular, the seemingly weaker conditions on atoms \eqref{SupportPrime}, \eqref{SizeDoublePrime}, \eqref{CancellationPrime}, and \eqref{NormalizationPrime} are still sufficient to imply that any limit of the form \eqref{decomposition_by_atoms} defines a divergence free measure on $\mathbb{R}^n$ (and therefore continue to characterize the space of such measures).

The cancellation conditions \ref{CancellationCond}/\eqref{CancellationPrime} can be replaced by the still weaker condition 
\begin{align*}\tag{$3''$}\label{CancellationDoublePrime}
  \int_{B} da &=0
  .
\end{align*}
Unlike \ref{CancellationCond}/\eqref{CancellationPrime}, the condition \eqref{CancellationDoublePrime} no longer implies that $a$ is divergence free.
However, the combination of \eqref{SupportPrime}, \eqref{SizeDoublePrime}, \eqref{CancellationDoublePrime}, and \eqref{NormalizationPrime} is the most immediate for comparison with the Hardy space atoms, which satisfy the support, size, and cancellation conditions
\begin{align*}
  \operatorname*{supp} a &\subset B
  ;
  \\
  \|a\|_{L^\infty} &\leq \frac{C}{(\operatorname*{diam} B)^n}
  ;
  \\
  \int_{B} da &=0.
\end{align*}
\begin{remark}
  Note that for Hardy space atoms, only three conditions are needed as the condition \eqref{NormalizationPrime} can be deduced as a consequence of the stronger size bound imposed in that setting.  
\end{remark}
The conditions \eqref{SupportPrime}, \eqref{SizeDoublePrime}, \eqref{CancellationDoublePrime}, and \eqref{NormalizationPrime}, and their comparison with the Hardy space suggests, that it would be fruitful to study spaces in which the atoms satisfy a modified size condition 
\begin{equation*}\tag{$2''_\beta$}\label{SizePowerp}
  \|a\|_{\mathcal{M}^\beta} \leq \frac{C_\beta''}{(\diam B)^\beta}
\end{equation*}
combining a Morrey norm bound with a different power of the diameter.  
It turns out that a slighty weaker signed version of the condition \eqref{SizePowerp}, along with  \eqref{SupportPrime}, \eqref{CancellationDoublePrime}, and \eqref{NormalizationPrime}, are sufficient to ensure that any limit of the form \eqref{decomposition_by_atoms} has support on a set whose lower Hausdorff dimension is at least $\beta$.
The resulting function spaces are therefore called dimension stable spaces, see \cite{DS}*{Theorem H} and the surrounding discussion.

\subsubsection{Overview of the proof}
The proof of \Cref{approximationMetric} relies on two main components: a representation of integral currents without boundary in terms of closed curves; and an algorithm to convert a closed curve into a collection of closed curves satisfying uniform bounds on their Morrey norm.

Generalizing work of S.\ Smirnov~\cite{Smirnov} on divergence free measures in the Euclidean setting, Paolini and Stepanov \cite{PS} have shown that in a complete separable metric space, a 1-current without boundary can be represented as an integral with respect to a certain measure on the space of Lipschitz curves.
These curves under this measure are generally not closed, but they can be considered to be closed ``on average'' in the sense that the induced measures for the starting and ending points coincide.
In \Cref{CurrentsAndCurves} we argue that these curves can be represented by sequences of closed piecewise-geodesic curves: see \Cref{bbassertion}.
This is similar to an observation of Bourgain and Brezis \cites{BourgainBrezis2004,BourgainBrezis2007} for the Euclidean case, proved in \cite{GHS}.  

The crucial part of the argument is to show that a closed piecewise-geodesic curve can itself be represented by an equivalent collection of closed piecewise-geodesic curves satisfying uniform Morrey bounds.
This is our \Cref{surgery-lem}, which we call the Surgery Lemma, and \Cref{surgery-eta}.
This result does not depend on the connection to 1-currents, and for clarity of exposition we state and prove this result first, in \Cref{surgery}.

Our statement of the Surgery Lemma differs from the corresponding statement in \cite{HS}, on which our result is based.
In \cite{HS}*{Lemma~5.1}, a smooth closed curve in Euclidean space is represented by a collection of piecewise-smooth closed curves, and the total length increases by a large universal constant.
By contrast, \Cref{surgery-lem} produces closed piecewise-geodesic curves in a more general metric space.
The proof of \Cref{surgery-lem} has been simplified considerably compared to the original proof of \cite{HS}*{Lemma~5.1}.
We introduce parameters $\epsilon$ and $n$, which were implicitly set as arbitrary constants in \cite{HS}*{Lemma~5.1}, and as a result \Cref{approximationMetric} allows the total length of the curves to be within a factor $1+\epsilon$ of the mass of the initial $1$-current, at the cost of a factor $C/\epsilon^2$ in the Morrey norm of the atoms. 

In Euclidean space, the piecewise-geodesic curves are precisely the polygonal curves.
Thus \Cref{approximationMetric} applied in Euclidean space yields \cite{HS}*{Theorem~1.5}, with more precise control over the dependence on constants.

\Cref{preliminaries} introduces notation for curves and geodesics.
\Cref{surgery} states and proves the Surgery Lemma~\ref{surgery-lem}, and its consequence \Cref{surgery-eta} that more closely matches the structure of \Cref{approximationMetric}.
\Cref{CurrentsAndCurves} gives further details on differential forms and metric currents and proves \Cref{bbassertion}.
In \Cref{ProofOfMainResult} these elements are combined to complete the proof of \Cref{approximationMetric}.
In \Cref{appendix} we prove a separability theorem for the space of Lipschitz functions on a complete separable metric space as well as several continuity results for metric $1$-forms, results which may be of independent interest, though are specifically useful for our purposes in \Cref{CurrentsAndCurves} in the proof of \Cref{bbassertion}.  

\section{Preliminaries}\label{preliminaries}

\subsection{Lipschitz curves}\label{LipschitzCurves}
By a Lipschitz curve in $E$ we mean a function $\gamma\colon[a,b]\to E$, where $a\leq b$, such that $\sup\set{d(\gamma(t),\gamma(t'))/ |t-t'|}<\infty$, the supremum being taken over distinct $t,t'\in[a,b], t\neq t'$.
The length of the curve is 
\begin{align*}
  \length(\gamma) := \sup\set{\sum_{i=1}^k d(\gamma(s_{i-1}), \gamma(s_i)) \colon a=s_0<s_1<\dotsb<s_{k-1}<s_k=b} 
  ,
\end{align*}
and it is easily seen that a Lipschitz curve has finite length.
Given a subinterval $[t,t']\subset[a,b]$, $t<t'$, we write $\gamma|_{[t,t']}$ for the Lipschitz curve formed by restricting to the subinterval $[t,t']$.

Define $\tilde{\Theta}(E)$ to be the set of equivalence classes of Lipschitz-continuous curves in $E$ modulo increasing reparametrization, where two curves $\gamma_i\colon[a_i,b_i]\to E$ are equivalent when there is a bijective increasing function $\phi\colon [a_1,b_1]\to [a_2,b_2]$ such that $\gamma_1 = \gamma_2\circ\phi$.
It is easily seen that the length of a Lipschitz curve is invariant under reparametrization.
Thus any equivalence class in $\tilde{\Theta}(E)$ contains a unique representative $\gamma$ defined on the interval $[0,\length(\gamma)]$ and parametrized by arc length in the sense that $\length(\gamma|_{[0,t]})=t$ for all $t\in [0,\length(\gamma)]$.
We may therefore identify $\tilde{\Theta}(E)$ with 
\begin{equation*}
  \Theta(E) = \set{\gamma\colon [0,\length(\gamma)]\to E\text{ such that $\gamma$ is parametrized by arc length}}
\end{equation*}
and for definiteness, we will deal henceforth primarily with $\Theta(E)$.
For instance, we will interpret $\gamma|_{[t,t']}$ modulo reparametrization as the curve $\tilde{\gamma}\in\Theta(E)$ defined by $\tilde{\gamma}\colon [0,t'-t]\to E$, $\tilde{\gamma}(s)=\gamma(t+s)$. 

Let $\gamma_1,\dotsc,\gamma_k\in\Theta(E)$ be Lipschitz curves such that the endpoint of $\gamma_i$ is the initial point of $\gamma_{i+1}$ for all $i=1,\dotsc,k-1$.
Then we can concatenate the curves $\gamma_1,\dotsc,\gamma_k$ to form $\gamma\in\Theta(E)$ that is also a Lipschitz curve.
In detail, if we set $s_i=\sum_{j=1}^i \length(\gamma_j)$ for $i=0,\dotsc,k$, then $\gamma(t)=\gamma_i(t-s_{i-1})$ for all $t\in[s_{i-1},s_i]$.
Note that $\gamma(t)$ is well-defined at $t\in\set{s_1,\dotsc,s_{k-1}}$ due to the assumption on the endpoints of the curves $\gamma_1,\dotsc,\gamma_k$; we will impose this condition on the endpoints as a standing requirement whenever we discuss concatenations.

Given $\gamma\in\Theta(E)$ parametrized by arc length, we define $\mu_{\gamma}$ to be the image of Lebesgue measure under the mapping $\gamma$, i.e., the Borel measure defined by
\begin{align}
  \label{mugammaFormula}
  \mu_{\gamma}(B): = |  \{ t\in[0,\length(\gamma)] \colon \gamma(t) \in B  \} |
  ,
\end{align}
where $|\cdot|$ denotes the Lebesgue measure.
The Morrey norm also applies to Borel measures $\mu$ on $E$, with
\begin{equation}\label{MorreyForMeasures}
  \Morrey{\mu} = \sup_{r>0, x\in E} \frac{\mu(B_r(x))}{r}
  ,
\end{equation}
and the definition \eqref{BallGrowthConstant} of $\Morrey{\gamma}$ can equivalently be expressed as $\Morrey{\gamma}=\Morrey{\mu_\gamma}$.

\begin{lemma}\label{MorreyIsNorm}
  If $\gamma\in\Theta(E)$ is the concatenation of curves $\gamma_1,\dotsc,\gamma_k\in\Theta(E)$, then 
  $[[\gamma]] = [[\gamma_1]] + \dotsb + [[\gamma_k]]$ and   $\Morrey{\gamma} \leq \Morrey{\gamma_1} + \dotsb + \Morrey{\gamma_k}$.
\end{lemma}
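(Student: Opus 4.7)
The plan is to reduce both assertions to the additivity of the occupation measure $\mu_\gamma$ under concatenation. As a preliminary step, I would establish the identity $\mu_\gamma = \mu_{\gamma_1} + \dotsb + \mu_{\gamma_k}$. Using the formula \eqref{mugammaFormula} together with the partition $[0,\length(\gamma)] = \bigcup_{i=1}^k [s_{i-1},s_i]$ on which $\gamma(t) = \gamma_i(t - s_{i-1})$, for any Borel set $B \subset E$ I would compute
\begin{align*}
  \mu_\gamma(B)
  &= \abs{\set{t \in [0,\length(\gamma)] \colon \gamma(t) \in B}}
  \\
  &= \sum_{i=1}^k \abs{\set{t \in [s_{i-1},s_i] \colon \gamma_i(t - s_{i-1}) \in B}}
  = \sum_{i=1}^k \mu_{\gamma_i}(B),
\end{align*}
by the translation invariance of Lebesgue measure on each subinterval.

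With this identity in hand the Morrey bound is immediate. For any $r > 0$ and $x \in E$,
\begin{equation*}
  \frac{\mu_\gamma(B_r(x))}{r} = \sum_{i=1}^k \frac{\mu_{\gamma_i}(B_r(x))}{r} \leq \sum_{i=1}^k \Morrey{\gamma_i},
\end{equation*}
and taking the supremum over $r>0$ and $x \in E$ yields $\Morrey{\gamma} \leq \sum_i \Morrey{\gamma_i}$, where I invoke the equivalent formulation $\Morrey{\gamma} = \Morrey{\mu_\gamma}$ supplied by \eqref{MorreyForMeasures}.

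For the current identity, I would exploit linearity of $T \mapsto T(\omega)$ in $\omega \in \mathcal{D}^1(E)$ and reduce to the case of a monomial form $\omega = f\, d\pi$. Using the standard formula from the Ambrosio--Kirchheim framework,
\begin{equation*}
  [[\gamma]](f\, d\pi) = \int_0^{\length(\gamma)} f(\gamma(t)) \, (\pi \circ \gamma)'(t)\, dt,
\end{equation*}
I would partition the integral over the subintervals $[s_{i-1}, s_i]$. Since $\gamma$ is Lipschitz (the endpoint-matching assumption built into the definition of concatenation guarantees continuity at each knot $s_i$, and the Lipschitz property of each $\gamma_i$ extends to $\gamma$), the derivative $(\pi\circ\gamma)'$ exists a.e.\ and satisfies $(\pi\circ\gamma)'(t) = (\pi\circ\gamma_i)'(t-s_{i-1})$ a.e.\ on $(s_{i-1},s_i)$. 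The substitution $s = t - s_{i-1}$ then gives
\begin{equation*}
  [[\gamma]](f\, d\pi) = \sum_{i=1}^k \int_0^{\length(\gamma_i)} f(\gamma_i(s)) \, (\pi \circ \gamma_i)'(s)\, ds = \sum_{i=1}^k [[\gamma_i]](f\, d\pi),
\end{equation*}
and by linearity this extends to every $\omega \in \mathcal{D}^1(E)$.

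I do not anticipate a serious obstacle: the argument is a routine bookkeeping exercise combining additivity of Lebesgue measure, change of variables, and the defining integral formula for $[[\gamma]]$. The only mildly technical point is verifying that the concatenation $\gamma$ inherits the Lipschitz property from its pieces so that the chain rule applies; this is guaranteed by the standing assumption on matching endpoints and by working in the arc-length parametrization on $\Theta(E)$.
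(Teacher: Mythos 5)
Your proof is correct and follows essentially the same route as the paper: both establish $\mu_\gamma = \sum_i \mu_{\gamma_i}$ by splitting over the concatenation subintervals, both derive the Morrey bound from norm subadditivity of $\Morrey{\cdot}$ on measures, and both obtain the current identity by splitting the integral in \eqref{currentcurve} over the subintervals. The only cosmetic difference is that you rewrite the Riemann--Stieltjes integral as a Lebesgue integral against $(\pi\circ\gamma)'$ (valid since $\pi\circ\gamma$ is Lipschitz, hence absolutely continuous), whereas the paper splits the Riemann--Stieltjes integral directly, which avoids invoking the a.e.\ chain rule.
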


\begin{proof}
It is easily seen that $\mu_\gamma = \mu_{\gamma_1} + \dotsb + \mu_{\gamma_k}$ because the length in the definition \eqref{mugammaFormula} of $\mu_\gamma$ can be split across the $k$ subintervals (disjoint except at their endpoints) corresponding to the concatenated curves $\gamma_1,\dotsc,\gamma_k$.
Since $\Morrey{\gamma}=\Morrey{\mu_\gamma}$ by definition, the inequality follows from the fact that $\Morrey{\cdot}$ is a norm on measures.

We will defer the proof that $[[\gamma]] = [[\gamma_1]] + \dotsb + [[\gamma_k]]$ until we have formally defined metric currents in \Cref{differentialforms}, but we note that it follows immediately from the definition \eqref{currentcurve} of $[[\gamma]]$.
\end{proof}

\subsection{Geodesic curves}

For the purposes of this paper, we say that a curve $\gamma\in\Theta(E)$ parametrized by arc length is a \emph{geodesic} if $d(\gamma(s),\gamma(t))=|s-t|$ for all $s,t\in[0,\length(\gamma)]$, i.e., if $\gamma$ is an isometry between $[0,\length(\gamma)]$ and its image in $E$.

We note that this definition of geodesic differs slightly from how the term is used in other contexts, where $\gamma$ may only be required to be a local isometry.
Thus, what we have called a geodesic means a ``length-minimizing geodesic.''

\begin{lemma}\label{ball-growth-geodesic}
  Let $\gamma\in\Theta(E)$ be a geodesic.
  Then $\gamma$ satisfies the ball-growth condition
  \[
  \Morrey{\gamma} = 2
  .
  \]
\end{lemma}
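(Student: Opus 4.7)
The plan is to sandwich $\Morrey{\gamma}$ between the bounds $2$ from above and below, using the isometry property $d(\gamma(s),\gamma(t)) = |s-t|$ in both directions.

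For the upper bound, I would fix $x \in E$ and $r > 0$ and set $A_{x,r} = \{t \in [0,\length(\gamma)] : d(x,\gamma(t)) \leq r\}$. For any $s, t \in A_{x,r}$, the isometry property combined with the triangle inequality yields
\[
  |s - t| = d(\gamma(s), \gamma(t)) \leq d(\gamma(s), x) + d(x, \gamma(t)) \leq 2r,
\]
so $A_{x,r}$ has diameter at most $2r$. Since $A_{x,r}$ is a subset of $\mathbb{R}$ of diameter at most $2r$, it is contained in an interval of length $2r$, hence $|A_{x,r}| \leq 2r$. Taking the supremum over $x, r$ gives $\Morrey{\gamma} \leq 2$.

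For the lower bound, I would assume $\length(\gamma) > 0$ (otherwise the Morrey norm is trivially $0$ and the statement should be read modulo this degenerate case, handled by the definition). Pick any $r$ with $0 < r \leq \length(\gamma)/2$ and set $x = \gamma(r)$. For $t \in [0, 2r] \subset [0, \length(\gamma)]$, the isometry property gives $d(x, \gamma(t)) = |t - r| \leq r$, so $[0, 2r] \subset A_{x,r}$ and therefore $|A_{x,r}|/r \geq 2$. This forces $\Morrey{\gamma} \geq 2$, completing the proof.

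There is no real obstacle here: the only subtlety is recognizing that the isometry property is used in two different directions, namely as an upper bound on separation (for the triangle inequality step, obtaining $|s-t| \leq 2r$) and as an exact equality of distance (for the lower bound, to guarantee that the entire interval $[0,2r]$ lies in $A_{x,r}$). Everything else is a direct computation from the definition \eqref{BallGrowthConstant}.
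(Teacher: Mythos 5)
Your proof is correct and follows essentially the same route as the paper: a triangle-inequality diameter bound for the upper estimate, and a centered-ball construction for the matching lower bound (the paper uses the specific choice $r = \tfrac{1}{2}\length(\gamma)$ rather than an arbitrary $r \le \length(\gamma)/2$, but this is the same idea). Your remark about the degenerate case $\length(\gamma)=0$ is apt — the lemma as stated implicitly assumes a nonconstant curve, as does the paper's own proof — but this is a cosmetic point, not a gap.
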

\begin{proof}
  A geodesic $\gamma$ gives an isometry from $[0,\length(\gamma)]$ to $E$.
  Thus if $t,t'\in[0,\length(\gamma)]$ have $\gamma(t),\gamma(t')\in B_r(x)$ for some $x\in E$, $r>0$, then $|t-t'|=d(\gamma(t),\gamma(t'))\leq 2r$.
  In particular, the set $\set{t\in[0,\length(\gamma)]\colon \gamma(t)\in B_r(x)}$ has diameter at most $2r$, so that $\mu_\gamma(B_r(x)) = |\set{t\in[0,\length(\gamma)]\colon \gamma(t)\in B_r(x)}|\leq 2r$.
  Equality is attained if we take $r=\tfrac{1}{2}\length(\gamma)$ and $x=\gamma(r)$: then $\gamma(t)\in B_r(x)$ for all $t\in[0,\length(\gamma)]$, and this interval has length $2r$ by construction.
  Therefore $\Morrey{\gamma}=\sup_{r>0,x\in E} \mu_\gamma(B_r(x))/r = 2$.
\end{proof}

Our assumption on the space $E$ means that for each unordered pair $\set{x,y}$ of points in $E$, there exists a (length-minimizing) geodesic $\gamma$ with endpoints $x,y$.
By considering both $\gamma$ and its reversal, we may therefore construct a pair of geodesics $\gamma_1,\gamma_2$ both of length $d(x,y)$, from $x$ to $y$ and from $y$ to $x$ respectively, and such that $[[\gamma_1]]+[[\gamma_2]]=0$.

For notational convenience, we will write this pair of curves as a function of the unordered pair $\set{x,y}$.
That is, $G_{x,y}$ will denote a geodesic of length $\length(G_{x,y})=d(x,y)$ with endpoints $G_{x,y}(0)=x$, $G_{x,y}(d(x,y))=y$, such that $G_{y,x}$ is the reversal of $G_{x,y}$ and
\begin{align}\label{ReversedGeodesics}
  [[G_{x,y}]] + [[G_{y,x}]] = 0
  .
\end{align}
We remark that in general, choosing a single such geodesic simultaneously for all choices of $\set{x,y}$ may require an appeal to the axiom of choice.

\subsection{Piecewise geodesic curves}

A piecewise geodesic means a concatenation of finitely many geodesic curves (with the requirement of compatible endpoints, as mentioned in \cref{LipschitzCurves}).
Equivalently, $\gamma\in\Theta(E)$ is a piecewise geodesic curve if there exists a partition $s_0=0<s_1<\dotsb<s_{k-1}<s_k=\length(\gamma)$ such that $\gamma|_{[s_{i-1},s_i]}$ is a geodesic for each $i=1,\dotsc,k$.
We will call such a partition $(s_0,\dotsc,s_k)$ a \emph{resolution of $\gamma$ into geodesic edges}.
We remark that resolutions into geodesic edges are not unique: for instance, refining such a partition yields another resolution into geodesic edges.
Once we have fixed a resolution into geodesic edges, we will call each subcurve $\gamma|_{[s_{i-1},s_i]}$ a \emph{geodesic edge} of $\gamma$.

Write $\Xgeo\subset \Theta(E)$ for the set of piecewise geodesic curves.

\begin{corollary}
  \label{C1-small-ball}
  Suppose $\gamma \in \Xgeo$ can be written as the concatenation of at most $k$ geodesic curves.
  Then $\mu_\gamma$ satisfies the ball growth condition
  \[
    \Morrey{\gamma} \leq 2k
    ,\qquad\text{i.e.,}\qquad
    \mu_\gamma(B_r(x)) \leq 2 k r\quad\text{for all $r>0,x\in E$}
    .
  \]
\end{corollary}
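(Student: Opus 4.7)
The plan is to combine the two immediately preceding results: \Cref{MorreyIsNorm} (subadditivity of $\Morrey{\cdot}$ under concatenation) and \Cref{ball-growth-geodesic} (each individual geodesic has Morrey norm equal to $2$). Since $\gamma\in\Xgeo$ is, by hypothesis, the concatenation of at most $k$ geodesic edges $\gamma_1,\dotsc,\gamma_m$ with $m\leq k$, \Cref{MorreyIsNorm} gives
\begin{equation*}
  \Morrey{\gamma} \leq \Morrey{\gamma_1} + \dotsb + \Morrey{\gamma_m},
\end{equation*}
and \Cref{ball-growth-geodesic} yields $\Morrey{\gamma_i}=2$ for each $i$ (with the convention that a degenerate geodesic of zero length contributes $0\leq 2$). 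Summing, $\Morrey{\gamma}\leq 2m\leq 2k$.

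The second form is just an unfolding of the definition. Because $\Morrey{\gamma}=\Morrey{\mu_\gamma}$ and the Morrey norm of a measure is the supremum of $\mu(B_r(x))/r$ over $r>0$ and $x\in E$, the bound $\Morrey{\gamma}\leq 2k$ is equivalent to $\mu_\gamma(B_r(x))\leq 2kr$ for every $r>0$ and every $x\in E$.

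There is no real obstacle here; the only mild care required is to allow for fewer than $k$ pieces (if some concatenated geodesic is trivial or if the representation uses $m<k$ edges), which is handled by the monotonicity $2m\leq 2k$. Everything else is a direct invocation of the two cited lemmas.
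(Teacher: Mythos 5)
Your proof is correct and mirrors the paper's argument exactly: the paper's own justification is simply that the corollary follows immediately from \Cref{ball-growth-geodesic} and \Cref{MorreyIsNorm}, which is precisely the combination you spell out. Nothing further is needed.
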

\Cref{C1-small-ball} follows immediately from \Cref{ball-growth-geodesic,MorreyIsNorm}.

\subsection{Closed curves and the basic cut operation}\label{BasicCut}

We call $\gamma\in\Theta(E)$ a closed curve if $\gamma(\length(\gamma))=\gamma(0)$, and write $\Theta_c(E)\subset\Theta(E)$ for the set of closed curves.
Then the definition of $\gamma|_{[t,t']}$, given in \Cref{LipschitzCurves} for the case $t<t'$, can be naturally extended to the case $t'<t$ while still respecting the orientation of $\gamma$.
Namely, for $\gamma\in\Theta_c(E)$ and $0\leq t'<t\leq\length(\gamma)$, we interpret $\gamma|_{[t,t']}$ as the concatenation of $\gamma|_{[t,\length(\gamma)]}$ and $\gamma|_{[0,t']}$, i.e., the curve $\tilde{\gamma}\in\Theta(E)$ defined by 
\begin{equation*}
  \tilde{\gamma}\colon [0,\length(\gamma)-t+t']\to E
  , 
  \quad 
  \tilde{\gamma}(s)=\begin{cases}
    \gamma(t+s), & 0\leq s<\length(\gamma)-t,\\
    \gamma(s-t), & \length(\gamma)-t \leq s \leq \length(\gamma)-t+t'
    .
  \end{cases}
\end{equation*}
In effect, we interpret $[0,\length(\gamma)]$ as the circle of length $\length(\gamma)$ traversed counter-clockwise with the endpoints $0$ and $\length(\gamma)$ identified, and we interpret $[t,t']$ as the counter-clockwise interval from $t$ to $t'$, either $[t,t']$ or $[t',\length(\gamma)]\cup[0,t]$ depending on whether $t\leq t'$ or $t>t'$.
We will call $[t,t']$ a sub-interval of $[0,\length(\gamma))]$ in both cases.
In a similar spirit we define
\begin{equation*}
  d_\gamma(t,t') = \min(\length(\gamma|_{[t,t']}),\length(\gamma|_{[t',t]}))
  ,
\end{equation*}
which we can interpret as the distance between $t$ and $t'$ in the circle.
We remark also that $\length(\gamma|_{[t,t']})+\length(\gamma|_{[t',t]})=\length(\gamma)$ for all distinct $t,t'\in[0,\length(\gamma)]$.

For consistency, we will also define $d_\gamma(s,s')=\length(\gamma|_{[\min(s,s'),\max(s,s')]})$ for a non-closed curve $\gamma\in\Theta(E)\setminus\Theta_c(E)$.

In \Cref{surgery} we will perform ``surgery'' on a closed curve, repeatedly cutting it into well-behaved pieces.
To this end, we first introduce a basic cut operation $C(\gamma,t,t')$, which splits a closed curve $\gamma\in\Theta_c(E)$ at two specified points $t,t'\in[0,\length(\gamma)]$ and returns a pair of closed curves formed from the two parts, each closed by a single geodesic edge traversed in opposite directions.
Specifically, $C(\gamma,t,t')$ is defined to be the pair $(\gamma',g)$ of closed curves, where $\gamma'$ is the concatenation of $\gamma|_{[t',t]}$ and $G_{\gamma(t),\gamma(t')}$, while $g$ is the concatenation of $\gamma|_{[t,t']}$ and $G_{\gamma(t'),\gamma(t)}$.
In particular,
\begin{gather*}
  [[\gamma]] = [[\gamma']] + [[g]]
  .
\end{gather*}
We write $\Xgeo_c = \Xgeo\cap\Theta_c(E)$ for the set of closed piecewise-geodesic curves.

\section{The Surgery Lemma} \label{surgery}

A key ingredient in the proof of \autoref{approximationMetric} is a metric analogue of the surgery lemma in \cite{HS}*{Lemma~5.1}. 
The Surgery Lemma shows that $\gamma\in\Xgeo_c$ can be represented by a sum of curves that are themselves closed and have uniformly bounded Morrey norms, with quantitative bounds on the amount of additional length required.

\begin{lemma}[Surgery]\label{surgery-lem}
  Let $E$ be a complete, separable, geodesic metric space, and let $0<\epsilon<1$ and $n\in\mathbb{N}$.
  Suppose $\gamma \in \Xgeo_c$ is a closed piecewise-geodesic curve.
  Then there exist $N\in\mathbb{N}$ and closed piecewise-geodesic curves $(\gamma_{j})_{j=1}^N$ in $\Xgeo_c$ such that
  \begin{enumerate}[ref=(\arabic*)]
    \item\label{item:SurgeryMeasureIdentity}
    the decomposition
    \begin{align*}
      [[\gamma]]= \sum_{j=1}^{N} [[\gamma_{j}]]
    \end{align*}
    holds;

    \item\label{item:SurgeryTotalLength}
    the total length of the curves in the decomposition satisfies
    \begin{align*}
      \sum_{j=1}^N \length(\gamma_{j}) \leq \left( 1 + \frac{2\epsilon}{1-\epsilon} + \frac{12 \epsilon^{-1}}{n(1-\epsilon)} \right) \length(\gamma) ;
    \end{align*}
    
    \item\label{item:SurgeryBallGrowth}
    each measure $\mu_{\gamma_j}$ satisfies the ball growth condition
    \begin{align*}
      \Morrey{\gamma_j}
      \leq 4\epsilon^{-1}+2n+10
      .
    \end{align*}
  \end{enumerate}
\end{lemma}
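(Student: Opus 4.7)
My plan is to establish the lemma by exhibiting an explicit iterative surgery algorithm built from the basic cut operation $C(\cdot,\cdot,\cdot)$ of \Cref{BasicCut}. Set the target threshold $M := 4\epsilon^{-1}+2n+10$ and initialize a finite multiset $\mathcal{C} := \set{\gamma}$ of closed piecewise-geodesic curves; cut curves in $\mathcal{C}$ and replace them with their two pieces until every element of $\mathcal{C}$ satisfies $\Morrey{\gamma_*} \leq M$. Throughout, maintain the identity $\sum_{\gamma_*\in\mathcal{C}} [[\gamma_*]] = [[\gamma]]$: this is automatic at initialization and preserved at every cut, because $C(\gamma_*,t_1,t_2)=(\gamma_*',g)$ satisfies $[[\gamma_*]]=[[\gamma_*']]+[[g]]$ by construction. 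Item \ref{item:SurgeryMeasureIdentity} therefore follows as soon as the procedure halts.

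At each iteration, if some $\gamma_*\in\mathcal{C}$ violates $\Morrey{\gamma_*}\leq M$, pick a witnessing ball $B_r(x)\subset E$ with $\mu_{\gamma_*}(B_r(x))>Mr$. The central step is a pigeonhole selection of parameters $t_1,t_2\in\gamma_*^{-1}(B_r(x))$ such that: (a) the shorter arc from $t_1$ to $t_2$ spans at most $n$ consecutive geodesic edges of $\gamma_*$; (b) a $\Theta(\epsilon)$-fraction of this arc lies in $B_r(x)$; and (c) the arc has length at most $O(\epsilon^{-1}) r$. Existence of such a pair is forced by the high density $\mu_{\gamma_*}(B_r(x))/r > 4\epsilon^{-1}+2n+10$: averaging over windows of $n$ consecutive edges (or equivalently, over consecutive connected components of $\gamma_*^{-1}(B_r(x))$) yields some window whose in-ball density realises (b) and (c). Apply $C(\gamma_*,t_1,t_2)$ and replace $\gamma_*$ in $\mathcal{C}$ by the remainder $\gamma_*'$ and the cut-off loop $g$.

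For item \ref{item:SurgeryBallGrowth}, observe that $g$ is a concatenation of at most $n$ geodesic edges inherited from $\gamma_*$ together with one closing geodesic, so \Cref{C1-small-ball} yields $\Morrey{g}\leq 2(n+1)\leq M$; the remainder $\gamma_*'$ either already satisfies the Morrey bound and is finalised, or re-enters the queue. Termination follows from a potential argument based on the total Morrey-violating mass $\sum_{\gamma_*}\sup_{r,x}\bigl(\mu_{\gamma_*}(B_r(x))-Mr\bigr)^+$, which decreases by a controlled amount at every cut. For item \ref{item:SurgeryTotalLength}, each application of $C$ adds at most $2r$ to the total length (the closing geodesic pair), while retiring an arc of length at least $\Theta(\epsilon) r$ into a finalised loop $g$. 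Summing the resulting geometric series yields the $\frac{2\epsilon}{1-\epsilon}\length(\gamma)$ contribution; a separate tally of the ``refinement'' cuts performed solely to enforce the edge-count constraint~(a) — which occur with spatial density $O(1/n)$ along $\gamma$ — supplies the additional $\frac{12\epsilon^{-1}}{n(1-\epsilon)}\length(\gamma)$ term.

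The main obstacle is the pigeonhole selection of $(t_1,t_2)$: one must \emph{simultaneously} control the edge count (for \ref{item:SurgeryBallGrowth} via \Cref{C1-small-ball}), the ball-mass cleared per unit of added length (for the length bookkeeping in \ref{item:SurgeryTotalLength}), and the strict progress required for termination. This is a quantitative refinement of the argument in \cite{HS}*{Lemma~5.1}, with the parameters $\epsilon$ and $n$ serving as explicit knobs that tune the tradeoff between length overhead and Morrey bound; tightening the constants hidden in the $\Theta$ and $O$ estimates above is what produces the precise numerical factors $4\epsilon^{-1}+2n+10$ and $1+\frac{2\epsilon}{1-\epsilon}+\frac{12\epsilon^{-1}}{n(1-\epsilon)}$.
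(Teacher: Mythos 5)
Your proposal departs substantially from the paper's proof, and the departure introduces gaps that I do not believe can be patched without essentially reinventing the paper's structure.

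\paragraph{The paper's mechanism.}
The paper does \emph{not} search for a Morrey-violating ball and cut near it. Instead it fixes $\delta := \min_j \length(\gamma|_{[s_{j-1},s_j]})$ (the smallest geodesic edge of the input curve), introduces two structural classes — $(\delta,\epsilon)$-large-scale-invertible curves (\Cref{LSI-def}) and $(\delta,\epsilon,n)$-curves (\Cref{delta-def}) — and shows (\Cref{full-ball-lem}) that a curve in both classes already satisfies the Morrey bound. The cut criterion is then intrinsic to the curve, not to a ball: a Type~I cut (\Cref{cut_I}) splices out the \emph{shortest} arc on which large-scale invertibility fails, and a Type~II cut (\Cref{cut_II}) splices out a short arc with $n+1$ small edges. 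The parameter $n$ in the lemma counts \emph{small} edges ($<\delta$) per window, not total edges; the $4\epsilon^{-1}$ term in the Morrey bound accounts separately for up to $2\epsilon^{-1}$ large edges fitting in a window of length $2\epsilon^{-1}\delta$. Termination is discrete and unconditional: each Type~I cut shortens the remainder by at least $(1-\epsilon)\delta$, and each Type~II cut reduces the small-edge count by at least $n$, so the number of cuts is bounded by $\length(\gamma)/((1-\epsilon)\delta)$ plus a $3/n$ multiple of it.

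\paragraph{Gaps in your approach.}
First, your termination argument does not close. The potential $\sum_{\gamma_*}\sup_{r,x}\bigl(\mu_{\gamma_*}(B_r(x))-Mr\bigr)^+$ is not shown to decrease by a quantity bounded away from zero, which is what finite termination needs; worse, a cut replaces the excised arc in $\gamma_*'$ by a new geodesic chord, and that chord can \emph{raise} the potential at a different ball, so even monotone decrease is unclear. The paper avoids this entirely by tying the decrement to the fixed scale $\delta$.

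Second, the pigeonhole step is asserted rather than proved, and as stated it over-constrains. You want an arc with at most $n$ total geodesic edges, a $\Theta(\epsilon)$-fraction of its length in $B_r(x)$, and total length $O(\epsilon^{-1})r$, all from the single hypothesis $\mu_{\gamma_*}(B_r(x))>Mr$. But edge count and length are independent: the edges inside the ball may all be very long (few of them) or very short (many of them), and averaging over windows of $n$ consecutive edges does not control their length, while averaging over connected components of $\gamma_*^{-1}(B_r(x))$ does not control their edge count. This is exactly the tension the paper resolves by tracking small edges and large edges separately (the $2n$ and $4\epsilon^{-1}$ terms in the Morrey bound), and by processing the two failure modes with two different cut operations. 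With your bound $\Morrey{g}\le 2(n+1)$ you are implicitly claiming to do strictly better than the target $4\epsilon^{-1}+2n+10$; that would require the excised arc to have few edges \emph{and} modest length, which you have not shown.

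Third, the length bookkeeping has a factor error and an unsubstantiated claim. The basic cut $C(\gamma_*,t_1,t_2)$ inserts a geodesic chord into \emph{each} of $\gamma_*'$ and $g$, so the added length is $2\,d(\gamma_*(t_1),\gamma_*(t_2))\le 4r$, not $2r$. More importantly, the ``geometric series'' picture requires each cut to permanently retire in-ball mass proportional to what it costs in added length, which you have not established (and which is false if the chord re-enters the same ball). The paper's accounting is instead direct: summing $\beta_j$ over Type~I cuts is bounded by $\length(\gamma)/(1-\epsilon)$ via the telescoping length decrease \eqref{LgammabetajBound}, and the Type~II overhead is bounded by the count in \eqref{T2Bound}.

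In short, the iterative-cut idea is in the right spirit, but the specific greedy/potential scheme you propose does not yield termination, the pigeonhole lemma it rests on is both unproved and in tension with itself, and the constants do not track. The decisive ideas of the paper's proof — fixing $\delta$ from the input, using large-scale invertibility as the cut trigger, separating small-edge failures from l.s.i.\ failures into two cut types, and getting termination from a fixed length decrement — are all absent.
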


The curves $(\gamma_{j})_{j=1}^N$ from \Cref{surgery-lem} will be determined from $\gamma$ via an algorithm controlled by parameters $\epsilon$ and $n$.
By choosing these parameters appropriately, the curves $(\gamma_{j})_{j=1}^N$ can be chosen to have a small relative increase in total length, as in the following result.

\begin{corollary}\label{surgery-eta}
  There exists a universal constant $C'$ such that if $E$ is a complete, separable, geodesic metric space, $0<\eta<1$, and $\gamma \in \Xgeo_c$ is a closed piecewise-geodesic curve, then there exist $N\in\mathbb{N}$ and closed piecewise-geodesic curves $(\gamma_{j})_{j=1}^N$ in $\Xgeo_c$ such that
  \begin{enumerate}[ref=(\arabic*)]
    \item\label{item:SurgeryMeasureIdentity-eta}
    the decomposition
    \begin{align*}
      [[\gamma]]= \sum_{j=1}^{N} [[\gamma_{j}]]
    \end{align*}
    holds;
    
    \item\label{item:SurgeryTotalLength-eta}
    the total length of the curves in the decomposition satisfies
    \begin{align*}
      \sum_{j=1}^N \length(\gamma_{j}) \leq \left( 1 + \eta \right) \length(\gamma) ;
    \end{align*}
    
    \item\label{item:SurgeryBallGrowth-eta}
    each measure $\mu_{\gamma_j}$ satisfies the ball growth condition
    \begin{align*}
      \Morrey{\gamma_j}
      \leq \frac{C'}{\eta^2}
      .
    \end{align*}
  \end{enumerate}
\end{corollary}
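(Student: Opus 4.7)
The plan is to derive this corollary directly from \Cref{surgery-lem} by a straightforward optimization of the parameters $\epsilon$ and $n$ in terms of $\eta$. The structural items---the decomposition identity \ref{item:SurgeryMeasureIdentity-eta} and the fact that the $\gamma_j$ lie in $\Xgeo_c$---are inherited verbatim from \Cref{surgery-lem}, so all the work is in tuning the two parameters so that the inflation factor in \ref{item:SurgeryTotalLength-eta} is at most $1+\eta$ while the Morrey bound in \ref{item:SurgeryBallGrowth-eta} takes the claimed form.

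Concretely, the length inflation supplied by \Cref{surgery-lem}\ref{item:SurgeryTotalLength} is $1+\frac{2\epsilon}{1-\epsilon}+\frac{12\epsilon^{-1}}{n(1-\epsilon)}$, and I would force each of the two error terms to lie below $\eta/2$. Setting $\epsilon = \eta/5$ gives $1-\epsilon \geq 4/5$ and therefore $\frac{2\epsilon}{1-\epsilon}\leq \eta/2$ for every $\eta\in(0,1)$. With $\epsilon$ fixed at this value, the second error term takes the form $C_0/(n\eta)$ for an explicit $C_0$ (one can take $C_0=75$), so choosing $n$ to be the smallest integer with $n\geq 150/\eta^2$ pushes it below $\eta/2$. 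Summing the two bounds gives \ref{item:SurgeryTotalLength-eta}.

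For \ref{item:SurgeryBallGrowth-eta}, I substitute the same choices into the Morrey bound from \Cref{surgery-lem}\ref{item:SurgeryBallGrowth}: the quantity $4\epsilon^{-1}+2n+10$ becomes at most $20/\eta + 2(150/\eta^2+1) + 10$. Since $\eta<1$, the linear-in-$\eta^{-1}$ term and the constant term are absorbed into $\eta^{-2}$ at the cost of a larger multiplicative constant, yielding a universal bound of the form $\Morrey{\gamma_j}\leq C'/\eta^2$ with an explicit $C'$ (the computation above gives $C'\leq 332$, though this numerical value is immaterial).

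There is no real analytic obstacle; the entire substantive content is in the Surgery Lemma itself, and this corollary is simply the extremal regime of the two-parameter family. What the computation incidentally exhibits is the rigidity of the tradeoff built into \Cref{surgery-lem}: to make both error terms $O(\eta)$ one necessarily takes $\epsilon\sim \eta$ and $n\gtrsim 1/\eta^2$, so the quadratic dependence $C'/\eta^2$ of the Morrey bound on $\eta^{-1}$ is the best that can be extracted from the Surgery Lemma as stated; in particular, matching the $\epsilon^{-2}$ exponent appearing in \Cref{approximationMetric}\eqref{morrey_bound}.
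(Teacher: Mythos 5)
Your proof is correct and takes exactly the same route as the paper, which dispatches the corollary in one line by invoking \Cref{surgery-lem} with $\epsilon=c\eta$ and $n=\lceil\epsilon^{-2}\rceil$ for a small absolute constant $c$. Your choices $\epsilon=\eta/5$ and $n=\lceil 150/\eta^2\rceil$ are a concrete instance of that, and the arithmetic you carry out checks out.
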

\Cref{surgery-eta} follows immediately from \Cref{surgery-lem} with $\epsilon=c\eta$ and $n=\lceil\epsilon^{-2}\rceil$ for a sufficiently small absolute constant $c>0$.

\subsection{Regularity conditions on piecewise-geodesic curves}

\Cref{ball-growth-geodesic} shows that each geodesic edge in a piecewise-geodesic curve $\gamma\in\Xgeo$ has Morrey norm 2.
If too many such geodesic edges are close together (for instance, if $\gamma$ consists of the same geodesic traversed repeatedly in opposite directions) then the Morrey norm of $\gamma$ may become large, as in the upper bound from \Cref{C1-small-ball}.
However, if the geodesic edges remain substantially separated across large spatial scales, the Morrey norm of $\gamma$ may stay considerably smaller.

This intuition is formalized in \Cref{LSI-def,delta-def}.
In both definitions, $\delta$ will represent the spatial scale of small geodesic edges from $\gamma$, and all our estimates will be uniform in $\delta>0$, whereas $\epsilon$ and $n$ will be the fixed parameters from \Cref{surgery-lem}.

\begin{definition}[Large-scale invertibility]\label{LSI-def}
  Let $\delta>0$ and $0<\epsilon<1$.
  We say that a piecewise-geodesic curve $\gamma\in\Xgeo$ is \emph{$(\delta,\epsilon)$-large-scale-invertible}, abbreviated \emph{$(\delta,\epsilon)$-l.s.i.}, if
  \begin{equation}
    \label{bilip-2}
    d(\gamma(s),\gamma(t)) > \epsilon d_\gamma(s,t)
    \quad \text{ for all } s,t \text{ with }d_\gamma(s,t)\geq \delta
    .
  \end{equation}
We write $\Xlsi(\delta,\epsilon)$ for the set of all $(\delta,\epsilon)$-l.s.i.\ curves.
\end{definition}

\begin{lemma}
    \label{large-ball-lem}
   If $\gamma\in\Xlsi(\delta,\epsilon)$, 
   then the measure $\mu_\gamma$ satisfies the ball growth condition
    \[
        \sup_{r\geq\delta/2, x\in E}\frac{\mu_\gamma(B_r(x))}{r} \leq 4 \left( 1+\epsilon^{-1} \right)
        .
    \]
\end{lemma}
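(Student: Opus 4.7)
The plan is to fix a ball $B_r(x)$ with $r\geq\delta/2$ and analyze the preimage $S=\gamma^{-1}(B_r(x))=\{t\in[0,\length(\gamma)]:\gamma(t)\in B_r(x)\}$, whose Lebesgue measure is $\mu_\gamma(B_r(x))$ by definition \eqref{mugammaFormula}. The strategy is to pick a base point $s_0\in S$ (assuming $S\neq\emptyset$, otherwise the bound is trivial) and split $S$ according to whether the intrinsic distance $d_\gamma(s_0,t)$ falls below or above the threshold $\delta$ governing the l.s.i.\ condition.

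First I would dispose of the ``near'' part
\begin{equation*}
S_{\mathrm{near}}=\{t\in S: d_\gamma(s_0,t)<\delta\}.
\end{equation*}
The set $\{t\in[0,\length(\gamma)]:d_\gamma(s_0,t)<\delta\}$ is an arc (in the closed case) or interval (in the non-closed case) of total length at most $2\delta$, so $|S_{\mathrm{near}}|\leq 2\delta\leq 4r$ using the hypothesis $r\geq\delta/2$.

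Next I would handle the ``far'' part $S_{\mathrm{far}}=S\setminus S_{\mathrm{near}}$. For any $t\in S_{\mathrm{far}}$, the definition of l.s.i.\ (Definition \ref{LSI-def}) applies since $d_\gamma(s_0,t)\geq\delta$, giving
\begin{equation*}
\epsilon\, d_\gamma(s_0,t)<d(\gamma(s_0),\gamma(t))\leq 2r,
\end{equation*}
where the last inequality uses $\gamma(s_0),\gamma(t)\in B_r(x)$. Hence $d_\gamma(s_0,t)<2r/\epsilon$, and $S_{\mathrm{far}}$ is contained in the set of points of circular (resp.\ linear) distance less than $2r/\epsilon$ from $s_0$, which has measure at most $4r/\epsilon$.

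Combining the two bounds yields $|S|\leq 4r+4r\epsilon^{-1}=4r(1+\epsilon^{-1})$, i.e., $\mu_\gamma(B_r(x))/r\leq 4(1+\epsilon^{-1})$; taking the supremum over $r\geq\delta/2$ and $x\in E$ gives the claim. I do not expect any real obstacle here: the only point requiring minor care is that $d_\gamma$ is defined as a circular distance when $\gamma$ is closed (Section \ref{BasicCut}) versus a linear one otherwise, but in both cases the set $\{t:d_\gamma(s_0,t)<c\}$ has Lebesgue measure at most $2c$, so the argument is unified.
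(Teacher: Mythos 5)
Your argument is correct and follows essentially the same route as the paper: both pick a base parameter value $s_0$ mapping into the ball, split the preimage according to whether $d_\gamma(s_0,t)$ is below or above the l.s.i.\ threshold $\delta$, bound the near part by $2\delta\le 4r$, and use \eqref{bilip-2} together with $d(\gamma(s_0),\gamma(t))\le 2r$ to bound the far part by $4\epsilon^{-1}r$. The only cosmetic difference is that the paper enlarges $B_r(x)$ to $B_{2r}(\gamma(s))$ before estimating, while you invoke the triangle inequality within $B_r(x)$ directly; the two are equivalent.
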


\begin{proof}
Let $x\in E$ and $r\geq \delta/2$.  
If $\mu_\gamma(B_r(x)) > 0$, then there exists $s\in [0,\length(\gamma)]$ such that $\gamma(s)\in B_r(x)$.
It follows that $B_r(x)\subset B_{2r}(\gamma(s))$, and so
\[
  \mu_\gamma(B_r(x)) \leq \mu_\gamma(B_{2r}(\gamma(s)))
  = |\{t\in[0,\length(\gamma)] \colon d(\gamma(s),\gamma(t)) \leq 2r\}|
  .
\]
As $\gamma$ satisfies~\eqref{bilip-2}, we have the estimate
\begin{align*}
  &|\{t\in[0,\length(\gamma)] \colon d(\gamma(s),\gamma(t)) \leq 2r\}| 
  \\&\quad
  = |\{t\in[0,\length(\gamma)] \colon d(\gamma(s),\gamma(t)) \leq 2r, d_\gamma(s,t) <\delta\}| \\
  &\qquad +  |\{t\in[0,\length(\gamma)] \colon d(\gamma(s),\gamma(t)) \leq 2r, d_\gamma(s,t) \geq \delta \}|\\
  &\quad\leq 2 \delta + |\{ t\in[0,\length(\gamma)] \colon d_\gamma(s,t) < 2\epsilon^{-1} r\}|\\
  &\quad\leq 4r+ 4\epsilon^{-1}r
  .
  \qedhere
\end{align*}

\end{proof}

Note that if $\delta$ could be replaced by 0, the conclusion of \Cref{large-ball-lem} would become
\begin{align*}
  \Morrey{\gamma}  \leq 4 \left( 1+\epsilon^{-1} \right)
  ,
\end{align*}
similar to the desired bound in \Cref{surgery-lem}\ref{item:SurgeryBallGrowth}.  
In general, to account for small radii, we introduce the following class of curves, which are constrained to have a bounded number of small geodesic edges in moderately small parts of the curve.
\begin{definition}[$(\delta,\epsilon,n)$-curve]
  \label{delta-def}
  Let $\gamma\in\Xgeo$ be a piecewise-geodesic curve.
  We say that $\gamma$ is a $(\delta,\epsilon,n)$-curve if it has a resolution $(s_0,\dotsc,s_k)$ into geodesic edges such that, for all subintervals $[t,t']\subset [0,\length(\gamma)]$ of length at most $2\epsilon^{-1}\delta$, 
  \begin{align*}
    \# \set{ j\in\set{1,\dotsc,k} \colon [s_{j-1},s_j]\subset[t,t']\text{ and }\length(\gamma|_{[s_{j-1},s_j]}) < \delta } \leq n
    .
  \end{align*}
  We denote by $\Xden(\delta,\epsilon,n)$ the set of all $(\delta,\epsilon,n)$-curves.
\end{definition}

In words, \Cref{delta-def} says that in a $(\delta,\epsilon,n)$-curve, every subinterval of length at most $2\epsilon^{-1}\delta$ can be written as a concatenation of geodesic edges, at most $n$ of which have length less than $\delta$, plus up to two partial geodesic edges overlapping the endpoints of the subinterval.
We remark that we obtain an equivalent definition if we also require in \Cref{delta-def} that $t,t'\in\set{s_0,\dotsc,s_k}$, i.e., that  $\gamma|_{[t,t']}$ should be a concatenation of geodesic edges.

It is also useful to count the number of edges with length smaller than $\delta$.
\begin{definition}\label{nsmallpieces-def}
  For a piecewise-geodesic curve $\gamma\in\Xgeo$, define
  \begin{align*}
    \nsmallpieces(\gamma,\delta):=  \min_{(s_0,\dotsc,s_k)} \# \set{ j\in\set{1,\dotsc,k} \colon  \length(\gamma|_{[s_{j-1},s_j]}) < \delta }
    ,
  \end{align*}
  where the minimum is over resolutions $(s_0,\dotsc,s_k)$ of $\gamma$ into geodesic edges.
\end{definition}
Note that if $\gamma$ fails to be a $(\delta,\epsilon,n)$-curve then necessarily $\nsmallpieces(\gamma,\delta) \geq n+1$, but the converse need not hold.

\begin{lemma}
    \label{full-ball-lem}
   If $0<\epsilon<1$, $n\in\mathbb{N}$, and $\gamma\in\Xden(\delta,\epsilon,n)\cap\Xlsi(\delta,\epsilon)$, 
   then $\gamma$ satisfies the ball growth condition
    \[
        \Morrey{\gamma} \leq 4\epsilon^{-1}+2n+4
        .
    \]
\end{lemma}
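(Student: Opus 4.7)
I would split on the radius. For $r \geq \delta/2$, \Cref{large-ball-lem} already gives $\mu_\gamma(B_r(x))/r \leq 4(1+\epsilon^{-1}) \leq 4\epsilon^{-1}+2n+4$, so the interesting case is $r < \delta/2$, where the $(\delta,\epsilon,n)$-curve structure must be used to prevent many small geodesic edges from crowding into $B_r(x)$.

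Fix $x \in E$, $r < \delta/2$, and set $S := \{t : \gamma(t) \in B_r(x)\}$, assumed non-empty; pick $s_0 \in S$. Let $R := \max(\delta, 2\epsilon^{-1}r)$, which satisfies $R \leq \epsilon^{-1}\delta$ since $r < \delta/2$ and $\epsilon \leq 1$. Define the subinterval $J := \{t : d_\gamma(s_0, t) \leq R\}$ of length $\leq 2R \leq 2\epsilon^{-1}\delta$. Exactly as in the proof of \Cref{large-ball-lem}, the inclusion $S \subset J$ follows from the l.s.i.\ hypothesis \eqref{bilip-2}: if $d_\gamma(s_0,t) \geq \delta$, then $\epsilon d_\gamma(s_0,t) < d(\gamma(s_0),\gamma(t)) \leq 2r$ gives $d_\gamma(s_0,t) < 2\epsilon^{-1}r \leq R$; otherwise $d_\gamma(s_0,t) < \delta \leq R$ automatically.

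Now invoke a resolution $(s_0', \dotsc, s_k')$ of $\gamma$ into geodesic edges witnessing \Cref{delta-def}. Since $S \subset J$, only edges meeting $J$ can contribute to $\mu_\gamma(B_r(x))$; these split into small full edges contained in $J$ with length $<\delta$ (at most $n$ by the $(\delta,\epsilon,n)$-condition applied to the subinterval $J$); large full edges contained in $J$ with length $\geq \delta$ (at most $2R/\delta$, since their total length is $\leq |J|$); and partial edges straddling an endpoint of $J$ (at most $2$). \Cref{ball-growth-geodesic} bounds each edge's contribution to $\mu_\gamma(B_r(x))$ by $2r$, so
\[ \mu_\gamma(B_r(x)) \leq 2r\bigl(n + 2R/\delta + 2\bigr) = 2nr + 4Rr/\delta + 4r. \]
A direct check gives $4Rr/\delta \leq 4\epsilon^{-1}r$ in both cases of the maximum: if $R = \delta$ then $4Rr/\delta = 4r \leq 4\epsilon^{-1}r$ because $\epsilon \leq 1$, and if $R = 2\epsilon^{-1}r$ then $4Rr/\delta = 8\epsilon^{-1}r^2/\delta < 4\epsilon^{-1}r$ because $r < \delta/2$. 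The bound $\mu_\gamma(B_r(x))/r \leq 4\epsilon^{-1} + 2n + 4$ follows.

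The principal obstacle is selecting $R$ so that both $S \subset J$ (forcing $R \geq \delta$ to absorb the regime where l.s.i.\ is vacuous \emph{and} $R \geq 2\epsilon^{-1}r$ to absorb the l.s.i.\ estimate) and $|J| \leq 2\epsilon^{-1}\delta$ (required to activate the $(\delta,\epsilon,n)$-condition) hold; taking $R := \max(\delta, 2\epsilon^{-1}r)$ just balances these constraints, and the case split $R = \delta$ vs.\ $R = 2\epsilon^{-1}r$ is what produces the precise constant $4\epsilon^{-1} + 2n + 4$ rather than something larger.
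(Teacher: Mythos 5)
Your proof is correct and follows essentially the same route as the paper's: the same split on $r \geq \delta/2$ (via \Cref{large-ball-lem}) versus $r < \delta/2$, and for small $r$ the same argument of using large-scale invertibility to localize $\gamma^{-1}(B_r(x))$ to a parameter interval of length at most $2\epsilon^{-1}\delta$, then counting small full edges (at most $n$), large full edges, and at most two partial edges, each contributing at most $2r$ by \Cref{ball-growth-geodesic}. The only cosmetic difference is that the paper takes the fixed interval $I = \{t : d_\gamma(s_0,t) < \epsilon^{-1}\delta\}$ and bounds the number of large full edges directly by $2\epsilon^{-1}$, whereas your $r$-dependent $R = \max(\delta, 2\epsilon^{-1}r)$ yields a smaller interval but then forces the case-split to verify $4Rr/\delta \leq 4\epsilon^{-1}r$ — a small detour, since the uniform bound $2R/\delta \leq 2\epsilon^{-1}$ already holds for all $r < \delta/2$.
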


\begin{proof}
By the assumption $\gamma\in\Xlsi(\delta,\epsilon)$, \Cref{large-ball-lem} applies and shows that
\begin{align*}
 \sup_{r \geq \delta/2, x\in E}\frac{\mu_\gamma(B_r(x))}{r} \leq 4 \left( 1+\epsilon^{-1} \right) r
 .
\end{align*}
Now consider $r<\delta/2$.
As in the proof of \Cref{large-ball-lem}, if $\mu_\gamma(B_r(x))>0$, we find $s \in[0,\length(\gamma)]$ such that $\gamma(s) \in B_r(x)$.
We next define the interval
\begin{align*}
  I:= \{t \in[0,\length(\gamma)]: d_\gamma(s,t) <  \epsilon^{-1}\delta\}.
\end{align*}
We claim that if $\gamma(t) \in B_{2r}(\gamma(s))$ then $t \in I$.
To see this, split according to whether $d_\gamma(s,t)<\delta$ or $d_\gamma(s,t)\geq \delta$.
If $d_\gamma(s,t)<\delta$, then the assumption $0<\epsilon<1$ gives $d_\gamma(s,t)<\delta<\epsilon^{-1}\delta$ and $t\in I$.
On the other hand, if $d_\gamma(s,t)\geq\delta$, then \eqref{bilip-2} applies and yields
\begin{equation*}
  d_\gamma(s,t) < \epsilon^{-1} d(\gamma(s),\gamma(t)) \leq \epsilon^{-1} (2r)<\epsilon^{-1}\delta
\end{equation*}
by the assumption $r<\delta/2$.

In particular, the part of $\gamma$ lying in $B_{2r}(\gamma(s))$ is a subset of $\gamma\vert_I$.
Now fix a resolution of $\gamma$ into geodesic edges.
The length of $\gamma|_I$ is at most $2\epsilon^{-1}\delta$, and therefore $\gamma\vert_I$ fully contains at most $2\epsilon^{-1}$ geodesic edges of length at least $\delta$.
On the other hand, $I$ has length at most $2\epsilon^{-1}\delta$.
Since $\gamma$ is a $(\delta,\epsilon,n)$-curve, $\gamma|_{I}$ contains at most $n$ geodesic edges of $\gamma$ of length less than $\delta$.  
Finally $\gamma|_I$ may contain parts of up to 2 other edges, up to one edge containing each of the endpoints of $\gamma|_I$.

In total, $\gamma|_I$ is contained in the union of $k\leq 2\epsilon^{-1}+n+2$ geodesic edges.
By \Cref{C1-small-ball}, 
\begin{align*}
  \mu_\gamma(B_r(x)) \leq \mu_\gamma(B_{2r}(\gamma(s))) \leq   (2\epsilon^{-1}+n+2)(2r)
  \leq (4\epsilon^{-1}+2n+4)r.
\end{align*}
Together with the estimate for $r\leq \delta/2$ from \Cref{large-ball-lem}, this yields the claim.
\end{proof}

\subsection{Type I cut operation}\label{ss-TypeI}
By \Cref{full-ball-lem}, a $(\delta,\epsilon,n)$-curve that is $(\delta,\epsilon)$-large-scale-invertible admits a ball growth condition.
Given a general curve, the idea is to perform ``surgery'' on the curve, using the basic cut operation $C(\gamma,t,t')$ from \Cref{BasicCut} in two distinct ways.  

In a Type I cut operation, the input is a $(\delta,\epsilon,n)$-curve that is not $(\delta,\epsilon)$-l.s.i., which is split into one slightly shorter curve and one well-behaved curve satisfying a ball growth condition.

\begin{lemma}[Type I cut operation]
  \label{cut_I}
  Let $\epsilon \in (0,1)$ and $n\in\mathbb{N}$.  
  There exist 
  \begin{align*}
    C_{\mathrm{I}}(\gamma) &\colon \Xden(\delta,\epsilon,n) \setminus \Xlsi(\delta,\epsilon)
    \to \Xgeo \times \Xgeo 
    \\
    \text{and}\quad \beta(\gamma) &\colon \Xden(\delta,\epsilon,n) \setminus \Xlsi(\delta,\epsilon) \to [\delta, \tfrac{1}{2}\length(\gamma)]
  \end{align*}
  with the following properties.
  Abbreviate $(\gamma',g):=C_{\mathrm{I}}(\gamma)$.
  Then
  \begin{align}\label{SameSumTypeI}
    [[\gamma]] = [[\gamma']] + [[g]]
    ,
  \end{align}
  the curve $g$ admits the ball growth bound
  \begin{align}\label{MorreyTypeI}
    \Morrey{g} \leq 4\epsilon^{-1}+2n+10
    ,
  \end{align}
  the curves $\gamma',g$ satisfy the length bounds
  \begin{align}
    \length(\gamma') &\leq \length(\gamma) -(1-\epsilon)\beta(\gamma)
    \label{LengthBoundTypeIgamma}
    , 
    \\
    \length(\gamma') +\length(g) &\leq \length(\gamma) + 2\epsilon \beta(\gamma)
    \label{LengthBoundTypeItotal}
    ,
  \end{align}
  and the number of small edges in the curve $\gamma'$ has the bound
  \begin{align}\label{SmallCountBoundTypeI}
    \nsmallpieces(\gamma',\delta) \leq \nsmallpieces(\gamma,\delta) +3
    .
  \end{align}
\end{lemma}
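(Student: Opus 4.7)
The plan is to construct $(s^*, t^*)$ as a pair minimizing $d_\gamma$ among witnesses to the failure of the $(\delta,\epsilon)$-l.s.i.\ condition, and to apply the basic cut operation at that pair. Precisely, the assumption $\gamma \notin \Xlsi(\delta,\epsilon)$ makes the set
\[
A := \set{(s,t) \in [0,\length(\gamma)]^2 \colon d_\gamma(s,t) \geq \delta,\ d(\gamma(s),\gamma(t)) \leq \epsilon\, d_\gamma(s,t)}
\]
non-empty; it is closed in the compact square by continuity. Choose $(s^*, t^*) \in A$ minimizing $d_\gamma(s^*, t^*)$, and after swapping $s^*$ and $t^*$ if necessary assume $\length(\gamma|_{[s^*, t^*]}) = d_\gamma(s^*, t^*) =: \beta(\gamma)$. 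Since $d_\gamma$ is the circular distance, $\beta(\gamma) \in [\delta, \tfrac{1}{2}\length(\gamma)]$. Set $C_{\mathrm{I}}(\gamma) := C(\gamma, s^*, t^*) = (\gamma', g)$, where the basic cut operation of Subsection~\ref{BasicCut} makes $g$ the concatenation of the short arc $\tilde\gamma := \gamma|_{[s^*, t^*]}$ with the closing bridge $G_{\gamma(t^*), \gamma(s^*)}$, while $\gamma'$ concatenates $\gamma|_{[t^*, s^*]}$ with the reversed bridge.

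Identity \eqref{SameSumTypeI} is built into the basic cut operation. Writing $L := d(\gamma(s^*), \gamma(t^*)) \leq \epsilon \beta(\gamma)$, the identities $\length(\gamma') = \length(\gamma) - \beta(\gamma) + L$ and $\length(g) = \beta(\gamma) + L$ immediately yield \eqref{LengthBoundTypeIgamma} and \eqref{LengthBoundTypeItotal}. For the small-edge bound \eqref{SmallCountBoundTypeI}, starting from a minimizing resolution of $\gamma$ into geodesic edges, the curve $\gamma'$ differs from a subcurve of $\gamma$ by at most two truncated geodesic edges at the cut points $s^*, t^*$ together with one bridge edge, each of which could have length less than $\delta$.

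The main step is \eqref{MorreyTypeI}, and the primary obstacle is that $g$ itself is generally not in $\Xlsi(\delta, \epsilon)$: for a pair $(s_1, s_2)$ with $s_1$ on $\tilde\gamma$ and $s_2$ on the bridge, the bridge-point sits within distance $L$ of both $\gamma(s^*)$ and $\gamma(t^*)$, which can make the ambient distance small even while $d_g(s_1, s_2)$ remains of order $\beta(\gamma)$ when $L$ is near $\epsilon \beta(\gamma)$. The workaround is to split $\mu_g = \mu_{\tilde\gamma} + \mu_{G_{\gamma(t^*),\gamma(s^*)}}$ and use \Cref{MorreyIsNorm,ball-growth-geodesic} to get
\[
\Morrey{g} \leq \Morrey{\tilde\gamma} + \Morrey{G_{\gamma(t^*),\gamma(s^*)}} \leq \Morrey{\tilde\gamma} + 2,
\]
reducing the task to $\Morrey{\tilde\gamma} \leq 4\epsilon^{-1} + 2n + 8$. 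The key input is an \emph{arc l.s.i.\ from minimality}: for $u_1, u_2 \in [s^*, t^*]$ with $\delta \leq |u_1 - u_2| < \beta(\gamma)$, one has $d(\gamma(u_1), \gamma(u_2)) > \epsilon |u_1 - u_2|$. (Since $|u_1 - u_2| \leq \beta(\gamma) \leq \tfrac{1}{2}\length(\gamma)$, $d_\gamma(u_1, u_2) = |u_1 - u_2|$, and a failure would place $(u_1, u_2)$ into $A$ with strictly smaller $d_\gamma$, contradicting minimality.) This supplies the $(\delta, \epsilon)$-l.s.i.\ hypothesis for $\tilde\gamma$ everywhere except the single endpoint pair, which has measure zero and is harmless for the ball estimates. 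Moreover $\tilde\gamma \in \Xden(\delta, \epsilon, n+2)$, since every subinterval of $\tilde\gamma$ of length at most $2\epsilon^{-1}\delta$ meets at most $n$ small edges of $\gamma$ plus the at most two possibly-small truncated edges at $s^*, t^*$. The target bound $4\epsilon^{-1} + 2(n+2) + 4$ then follows by rerunning the proof of \Cref{full-ball-lem} on $\tilde\gamma$.
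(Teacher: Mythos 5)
Your proposal is correct and follows essentially the same strategy as the paper's proof: choose a pair $(s^*,t^*)$ minimizing $d_\gamma$ among witnesses to the failure of the l.s.i.\ condition, apply the basic cut, obtain the length and small-edge bounds directly, and derive the Morrey bound on $g$ by showing the short arc inherits l.s.i.-type control from minimality and then invoking \Cref{full-ball-lem} together with bookkeeping for the bridge and the two possibly-truncated edges.

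Where you differ from the paper's write-up is in two streamlining choices worth noting. First, the paper handles the endpoint pair $(t,t')$ by a case split on $\beta(\gamma)=\delta$ versus $\beta(\gamma)>\delta$, using in the latter case a limiting argument (approximating $[t,t']$ by slightly smaller subintervals) to obtain the non-strict inequality $d(\gamma(t),\gamma(t'))\geq\epsilon d_\gamma(t,t')$, so that $\gamma|_{[t,t']}$ literally satisfies \eqref{bilip-2} in non-strict form; you instead observe that the single exceptional pair $(s^*,t^*)$ contributes Lebesgue measure zero in the set estimates of \Cref{large-ball-lem,full-ball-lem} and is therefore harmless, which is correct and avoids both the case split and the limit. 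Second, the paper extracts the maximal ``whole-edge'' subcurve $\gamma|_{[u,u']}$, applies \Cref{full-ball-lem} with the original $n$, and then adds the two partial edges and the bridge via \Cref{MorreyIsNorm,ball-growth-geodesic}, contributing $+6$; you instead absorb the two partial edges into a modified count $\tilde\gamma\in\Xden(\delta,\epsilon,n+2)$, run \Cref{full-ball-lem} with $n+2$, and add only the bridge, contributing $+2$. The arithmetic lands in the same place, $4\epsilon^{-1}+2n+10$. Both accountings are valid; yours is arguably a little cleaner to state. If you wish to make the proposal airtight, it would be worth spelling out explicitly that the induced resolution of $\tilde\gamma$ (whole edges of $\gamma$ inside $[s^*,t^*]$ plus at most two truncated boundary edges) is a valid resolution witnessing $\tilde\gamma\in\Xden(\delta,\epsilon,n+2)$, and that the single exceptional endpoint pair leaves the inclusion-up-to-measure-zero in \Cref{large-ball-lem} intact for every choice of basepoint $s$, not just $s=0$ or $s=\beta(\gamma)$ — both of which are straightforward but deserve a sentence.
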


As we shall explain in the proof of \Cref{surgery-lem}, we will use $\gamma'$ as the input at the next stage of an iterative procedure.
Thus the bounds \eqref{LengthBoundTypeIgamma} and $\beta(\gamma)\geq\delta$, which show that a Type I cut operation reduces the length by an amount bounded away from 0, will help to guarantee that the iterative procedure must terminate.
Meanwhile the curve $g$, which already satisfies a ball growth condition, will become one of the output curves in \Cref{surgery-lem}.

\begin{proof}
Consider a closed curve $\gamma\in\Xden(\delta,\epsilon,n)\setminus \Xlsi(\delta,\epsilon)$ that is a $(\delta,\epsilon,n)$-curve but is not $(\delta,\epsilon)$-large-scale-invertible.
Note that such a curve must have $\length(\gamma) \geq 2\delta$.

The failure of $\gamma$ to be $(\delta,\epsilon)$-large-scale-invertible
implies that there exist $s,s'\in[0,\length(\gamma)]$
with $d_\gamma(s,s') \geq \delta$ and $d(\gamma(s),\gamma(s')) \leq \epsilon d_{\gamma}(s,s')$.
We may therefore define
\begin{equation}\label{betagammainf}
  \beta(\gamma) = \inf \{ \length(\gamma|_{[s,s']}) \colon s,s' \in [0,\length(\gamma)], d_{\gamma}(s,s') \geq \delta
  \text{ and } d(\gamma(s),\gamma(s')) \leq \epsilon d_{\gamma}(s,s')\}
  ,
\end{equation}
where the infimum is over a non-empty set.

Since $\length(\gamma|_{[s,s']})\geq d_\gamma(s,s')\geq\delta$, we have $\beta(\gamma)\geq\delta$ by construction.
The constraints in the infimum \eqref{betagammainf} are unaffected by interchanging $s$ and $s'$, so we may assume without loss of generality that $\length(\gamma|_{[s,s']})\leq \length(\gamma|_{[s',s]})$.
In this case $\length(\gamma|_{[s,s']})+\length(\gamma|_{[s',s]}) = \length(\gamma)$ implies that $\length(\gamma|_{[s,s']})\leq \tfrac{1}{2}\length(\gamma)$, and we conclude that $\beta(\gamma)\leq\tfrac{1}{2}\length(\gamma)$.

Since $\length(\gamma|_{[s,s']})$, $d_\gamma(s,s')$ and $d(\gamma(s),\gamma(s'))$ are continuous functions of $s,s'$, the set of pairs $(s,s')$ that appear in \eqref{betagammainf} is a non-empty compact subset of $[0,\length(\gamma)]^2$.
Hence the infimum is attained and we may choose in some definite way a pair $(t,t')=(t(\gamma), t'(\gamma))$ attaining the infimum; for instance, we may choose $(t,t')$ to be the lexicographically-smallest pair attaining the infimum.
Then $\beta(\gamma)=d_\gamma(t(\gamma),t'(\gamma))$, and we set $C_{\mathrm{I}}(\gamma) = C(\gamma, t(\gamma), t'(\gamma)) := (\gamma',g)$.
Thus $g$ consists of a curve $\gamma|_{[t,t']}$ of length $\beta(\gamma)$, closed by a single geodesic edge; and $\gamma'$ consists of the rest of $\gamma$, with $\gamma|_{[t,t']}$ replaced by a single geodesic edge.
The identity \eqref{SameSumTypeI} follows as a property of the basic cut operation $C(\gamma,t,t')$.

For the length bounds \eqref{LengthBoundTypeIgamma}--\eqref{LengthBoundTypeItotal}, we note that $\gamma'$ is formed from $\gamma$ by removing a part $\gamma|_{[t,t']}$ of length $\beta(\gamma)$ and inserting a geodesic edge of length $d(\gamma(t),\gamma(t'))$.
Since $(t,t')$ attain the infimum in \eqref{betagammainf} we have the bound $d(\gamma(t),\gamma(t'))\leq\epsilon d_\gamma(t,t')$, so that $\length(\gamma')-(\length(\gamma)-\beta(\gamma))=d(\gamma(t),\gamma(t'))\leq \epsilon d_\gamma(t,t')$, as in \eqref{LengthBoundTypeIgamma}.
Similarly, $\gamma'$ and $g$ collectively include the entire path $\gamma$ along with two geodesic edges of length $d(\gamma(t),\gamma(t'))$, so the bound \eqref{LengthBoundTypeItotal} follows by the same argument.

For the bound \eqref{SmallCountBoundTypeI} on small edges, note that $\gamma'$ can be formed using at most 3 edges that are not edges of $\gamma$: up to two partial edges between $\gamma(t),\gamma(t')$ and the respective closest endpoints of edges contained in $\gamma|_{[t',t]}$; and the geodesic edge $[\gamma(t'),\gamma(t)]$.
Each of those 3 edges may have length less than $\delta$, and $m(\gamma',\delta)\leq m(\gamma,\delta)+3$ follows.

It remains to prove the ball growth bound \eqref{MorreyTypeI} for $g$.
Fix a resolution of $\gamma$ into geodesic edges satisfying the properties in \Cref{delta-def}.

We first handle the case where $\beta(\gamma)=\delta$, i.e., where the curve $\gamma|_{[t,t']}$ has length $\delta$.
Since $\epsilon<1$, the curve $\gamma|_{[t,t']}$ has length less than $2\epsilon^{-1}\delta$, so we may apply the definition of $(\delta,\epsilon,n)$-curve to conclude that $\gamma|_{[t,t']}$ contains at most $n$ edges of $\gamma$ with lengths smaller than $\delta$.
In addition, $\gamma|_{[t,t']}$ may contain parts of up to 2 other edges of $\gamma$; or $\gamma|_{[t,t']}$ may instead consist of a single edge of length $\delta$.
Either way, $\gamma|_{[t,t']}$ can be formed as a concatenation of $k\leq n+2$ geodesics, so that \Cref{C1-small-ball} gives $\Morrey{g}\leq 2(n+2)$, which implies \eqref{MorreyTypeI}.

Otherwise, we may assume $\beta(\gamma)>\delta$.
By minimality, every strictly smaller subinterval $[s,s']\subsetneq [t,t']$ of length at least $\delta$ must satisfy $d(\gamma(s),\gamma(s')) > \epsilon d_\gamma(s,s')$.
Since $\beta(\gamma)>\delta$, we may find a sequence of subintervals increasing towards $[t,t']$ and all having length at least $\delta$.
By continuity it follows that $d(\gamma(t),\gamma(t'))\geq\epsilon d_\gamma(t,t')$, so that 
\begin{equation}\label{gammatt'lsi}
    d(\gamma(s),\gamma(s'))\geq\epsilon d_\gamma(s,s') \quad\text{for all $s,s'\in[t,t']$ such that }d_\gamma(s,s')\geq \delta
  .
\end{equation}
Since $\length(\gamma|_{[t,t']})=\beta(\gamma)\leq\tfrac{1}{2}\length(\gamma)$, distances along the curves $\gamma|_{[t,t']}$ and $\gamma$ coincide: for $s,s'\in[t,t']$, 
\[
  d_\gamma(s,s')=\min(\length(\gamma|_{[s,s']}), \length(\gamma)-\length(\gamma|_{[s,s']}))=\length(\gamma|_{[s,s']})=d_{\gamma|_{[t,t']}}(s,s')
  .
\]
Thus \eqref{gammatt'lsi} shows that $\gamma|_{[t,t']}$ is $(\delta,\epsilon)$-l.s.i.

Let $\gamma|_{[u,u']}$ be the subcurve of $\gamma|_{[t,t']}$ formed by concatenating those geodesic edges $\gamma_{[s_{j-1},s_j]}$ for which $[s_{j-1},s_j]\subset[t,t']$, possibly excluding up to two partial edges of $\gamma$ that overlap the endpoints of $[t,t']$.
Since $\gamma$ is a $(\delta,\epsilon,n)$-curve it follows that $\gamma|_{[u,u']}$ is a $(\delta,\epsilon,n)$-curve, and since $\gamma|_{[t,t']}$ is $(\delta,\epsilon)$-l.s.i.\ it follows that $\gamma|_{[u,u']}$ is too.
By \Cref{full-ball-lem}, $\Morrey{\gamma|_{[u,u']}}\leq 4\epsilon^{-1}+2n+4$.
Since $g$ consists of $\gamma|_{[u,u']}$, up to two partial geodesic edges of $\gamma$ that belong to $\gamma|_{[t,t']}$ but not $\gamma|_{[u,u']}$, and the geodesic $G_{\gamma(t'),\gamma(t)}$, \Cref{MorreyIsNorm,ball-growth-geodesic} give 
\begin{align*}
\Morrey{g}&\leq \Morrey{\gamma|_{[u,u']}} + 2\times 3 \\
&\leq 4\epsilon^{-1}+2n+10,
\end{align*}
which is the claimed inequality \eqref{MorreyTypeI}.
\end{proof}

\subsection{Type II cut operation}\label{ss-TypeII}

In a Type II cut operation, the input is a curve that is not a $(\delta,\epsilon,n)$-curve, which is split into one curve from which at least $n$ geodesic edges of length less than $\delta$ have been removed, along with one well-behaved curve satisfying a ball growth condition.
Note that repeatedly performing a Type II cut operation will take any piecewise geodesic curve into a $(\delta,\epsilon,n)$-curve with a finite number of iterations.

\begin{lemma}[Type II cut operation]\label{cut_II}
  Let $\epsilon \in (0,1)$ and $n\in\mathbb{N}$.  There exists
  \begin{align*}
    C_{\mathrm{II}}(\gamma) &: \Xgeo \setminus \Xden(\delta,\epsilon,n) \to \Xgeo \times \Xgeo
  \end{align*}
  with the following properties.
  Abbreviate $(\gamma',g):=C_{\mathrm{II}}(\gamma)$.
  Then
  \begin{align}\label{SameSumTypeII}
    [[\gamma]] = [[\gamma']] + [[g]],
  \end{align}
  where $g$ satisfies
  \begin{align}\label{MorreyTypeII}
    \Morrey{g} \leq 2(n+2\epsilon^{-1}+2),
  \end{align}
  the curves $\gamma',g$ satisfy the length bounds
  \begin{align}
    \label{LengthBoundTypeIIgamma}
    \length(\gamma') &\leq \length(\gamma)
    , \\
    \length(\gamma') +\length(g) &\leq \length(\gamma) + 4\epsilon^{-1}\delta
    \label{LengthBoundTypeIItotal}
    ,
  \end{align}
  and the number of small edges in the curve $\gamma'$ has the bound
  \begin{align}\label{SmallCountBoundTypeII}
    \nsmallpieces(\gamma',\delta) \leq \nsmallpieces(\gamma,\delta) -n
    .
  \end{align}
\end{lemma}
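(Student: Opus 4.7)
My approach is to directly exploit the hypothesis that $\gamma \notin \Xden(\delta,\epsilon,n)$ to locate a short subinterval containing many small geodesic edges, and then apply the basic cut operation at the boundary of this subinterval. First, I would fix a resolution $(s_0,\dotsc,s_k)$ of $\gamma$ into geodesic edges attaining the minimum in \Cref{nsmallpieces-def}, so that the number of edges of length less than $\delta$ in this resolution equals $\nsmallpieces(\gamma,\delta)$. By the negation of \Cref{delta-def} applied to this resolution, and the equivalent reformulation from the remark after that definition, there exist indices $i_0\leq j_0$ in $\{1,\dotsc,k\}$ such that $s_{j_0}-s_{i_0-1}\leq 2\epsilon^{-1}\delta$ and the subinterval $[s_{i_0-1},s_{j_0}]$ contains at least $n+1$ edges of length less than $\delta$.

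Next, I would shrink this subinterval to contain exactly $n+1$ small edges: let $t=s_{i-1}$ be the left endpoint of the first small edge inside $[s_{i_0-1},s_{j_0}]$, and $t'=s_j$ the right endpoint of the $(n+1)$-st small edge inside. Then $[t,t']\subset[s_{i_0-1},s_{j_0}]$, so $\length(\gamma|_{[t,t']})\leq 2\epsilon^{-1}\delta$, and $\gamma|_{[t,t']}$ is a concatenation of exactly $n+1$ edges of length $<\delta$ together with at most $2\epsilon^{-1}\delta/\delta = 2\epsilon^{-1}$ edges of length $\geq\delta$ (since each large edge contributes at least $\delta$ to the total length). I would then set $C_{\mathrm{II}}(\gamma):=C(\gamma,t,t')=(\gamma',g)$ via the basic cut operation of \Cref{BasicCut}, from which the decomposition identity \eqref{SameSumTypeII} is immediate.

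For the Morrey bound \eqref{MorreyTypeII}, the curve $g$ is the concatenation of $\gamma|_{[t,t']}$ with the single closing geodesic $G_{\gamma(t'),\gamma(t)}$, hence consists of at most $(n+1)+2\epsilon^{-1}+1=n+2\epsilon^{-1}+2$ geodesic edges; \Cref{C1-small-ball} then gives $\Morrey{g}\leq 2(n+2\epsilon^{-1}+2)$. For the length bounds, since $d(\gamma(t),\gamma(t'))\leq\length(\gamma|_{[t,t']})\leq 2\epsilon^{-1}\delta$, we have
\begin{equation*}
\length(\gamma') = \length(\gamma)-\length(\gamma|_{[t,t']})+d(\gamma(t),\gamma(t'))\leq \length(\gamma),
\end{equation*}
and
\begin{equation*}
\length(\gamma')+\length(g) = \length(\gamma)+2d(\gamma(t),\gamma(t'))\leq \length(\gamma)+4\epsilon^{-1}\delta,
\end{equation*}
yielding \eqref{LengthBoundTypeIIgamma} and \eqref{LengthBoundTypeIItotal}. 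For \eqref{SmallCountBoundTypeII}, the resolution of $\gamma'$ inherited from the chosen resolution of $\gamma$ (deleting the edges of $\gamma$ inside $[t,t']$ and inserting the single geodesic $G_{\gamma(t),\gamma(t')}$) contains at most $\nsmallpieces(\gamma,\delta)-(n+1)+1=\nsmallpieces(\gamma,\delta)-n$ edges of length less than $\delta$, and the bound follows because $\nsmallpieces(\gamma',\delta)$ is the infimum over all resolutions of $\gamma'$.

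The main subtlety is the bookkeeping for \eqref{SmallCountBoundTypeII}: the definition of $\Xden(\delta,\epsilon,n)$ is an existence statement over resolutions, so in principle its failure only produces a bad subinterval for each resolution; choosing at the outset a resolution attaining $\nsmallpieces(\gamma,\delta)$ is precisely what allows the final bound to be stated in terms of $\nsmallpieces(\gamma,\delta)$ rather than some resolution-dependent count. Everything else is a direct application of the definitions and \Cref{C1-small-ball}.
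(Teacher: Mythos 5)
Your proof is correct and takes essentially the same approach as the paper: fix a resolution attaining $\nsmallpieces(\gamma,\delta)$, use the failure of membership in $\Xden(\delta,\epsilon,n)$ (together with the remark after \Cref{delta-def}) to locate a subinterval of length at most $2\epsilon^{-1}\delta$ with endpoints at edge boundaries containing at least $n+1$ small edges, shrink to exactly $n+1$ small edges, and apply the basic cut operation. Your bookkeeping for the small-edge count, the length bounds via $d(\gamma(t),\gamma(t'))\leq 2\epsilon^{-1}\delta$, and the Morrey bound via \Cref{C1-small-ball} matches the paper's argument step for step.
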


\begin{proof}
Let $(s_0,\dotsc,s_k)$ be a resolution of $\gamma$ into geodesic edges that realizes the minimum in \Cref{nsmallpieces-def}.
Since $\gamma$ is not a $(\delta,\epsilon,n)$-curve, there exists an interval $[s,s']$ such that $\gamma|_{[s,s']}$ has length at most $2\epsilon^{-1}\delta$ and contains at least $n+1$ edges of length smaller than $\delta$.
As in the comment following \Cref{delta-def}, by shrinking $[s,s']$ if necessary, we may assume that $s,s'\in\set{s_0,\dotsc,s_k}$ are endpoints of geodesic edges.
Shrinking $[s,s']$ further if necessary, we may further assume that $\gamma|_{[s,s']}$ has exactly $n+1$ edges of length smaller than $\delta$.
As in the proof of \Cref{cut_I}, we choose $t=t(\gamma)$ and $t'=t'(\gamma)$ in some definite way, for instance the lexicographically-smallest pair of endpoints of geodesic edges with the properties above.
Then we set $C_{\mathrm{II}}(\gamma)=C(\gamma,t(\gamma),t'(\gamma))=(\gamma',g)$, so that \eqref{SameSumTypeII} follows as a property of the basic cut operation.

The curve $\gamma'$ is formed from $\gamma$ by replacing part of $\gamma$ by a single geodesic edge.
By assumption this geodesic edge is length-minimizing, so $\length(\gamma')\leq\length(\gamma)$ follows, verifying \eqref{LengthBoundTypeIIgamma}.
In the construction of $\gamma'$, the removed part $\gamma|_{[t,t']}$ contains $n+1$ geodesic edges of length smaller than $\delta$ by assumption, and these are replaced by a single geodesic edge which may have length smaller than $\delta$, for an overall decrease of at least $n$ small geodesic edges, verifying \eqref{SmallCountBoundTypeII}.

Collectively, $\gamma'$ and $g$ contain all the edges of $\gamma$ plus two additional geodesic edges each of length $d(\gamma(t),\gamma(t'))\leq\length(\gamma|_{[t,t']})\leq 2\epsilon^{-1}\delta$; this proves \eqref{LengthBoundTypeIItotal}.

Note that $\gamma_{[t,t']}$ has length at most $2\epsilon^{-1}\delta$, so it contains at most $2\epsilon^{-1}$ edges of length at least $\delta$.
By construction, $\gamma_{[t,t']}$ contains exactly $n+1$ edges of length smaller than $\delta$.
Finally $g$ consists of these edges from $\gamma|_{[t,t']}$ together with one geodesic edge $G_{\gamma(t'),\gamma(t)}$.
Thus \Cref{C1-small-ball} with $k\leq 2\epsilon^{-1}+(n+1)+1$ gives \eqref{MorreyTypeII}.
\end{proof}

\subsection{Surgery algorithm and proof of \texorpdfstring{\Cref{surgery-lem}}{Lemma \ref*{surgery-lem}}}\label{ss-SurgeryAlg}

We are now ready to prove the Surgery Lemma.

\begin{proof}[Proof of \Cref{surgery-lem}]
Let $\gamma\in\Xgeo_c$ be a closed piecewise-geodesic curve. 
Fix a resolution $(s_0,\dotsc,s_k)$ of $\gamma$ into geodesic edges and set $\delta = \min\limits_{1 \leq j \leq k}  \length(\gamma|_{[s_{j-1},s_j]})$.
By construction, $m(\gamma,\delta)=0$ and therefore $\gamma$ is a $(\delta,\epsilon,n)$-curve.

The curves $\{g_i\}_{i=1}^N$ are produced using an iterative procedure, which we call the \emph{surgery algorithm}.
The iteration is initialized with $\gamma^{(0)} = \gamma$.
At the start of step $i\geq 1$, we have an input curve $\gamma^{(i-1)}$ to be processed and previously output curves $g_1,\dotsc,g_{i-1}$.
The iteration proceeds in two phases.

In the first phase, check whether $\gamma^{(i-1)}$ is a $(\delta,\epsilon,n)$-curve.
If it is not, perform a Type II cut as in \Cref{cut_II} to produce an output curve $g_i$ and a new curve $\gamma^{(i)}$ to be processed at the next step.
The output curve satisfies a ball growth condition by \eqref{MorreyTypeII}.
Meanwhile, \eqref{LengthBoundTypeIIgamma} and \eqref{SmallCountBoundTypeII} shows that $\gamma^{(i)}$ has no additional length and strictly fewer small geodesic edges compared to $\gamma^{(i-1)}$.
Thus performing a sufficient number of Type II cuts will yield a curve $\gamma^{(i')}$ ($i'\geq i$) that is a $(\delta,\epsilon,n)$-curve.

In the second phase, we may therefore assume that, perhaps after having performed several Type II cuts and having increased $i$, the curve $\gamma^{(i-1)}$ is a $(\delta,\epsilon,n)$-curve.
Next, check whether $\gamma^{(i-1)}$ is also $(\delta,\epsilon)$-l.s.i.
If it is, then \Cref{full-ball-lem} applies and yields a ball growth condition for $\gamma^{(i-1)}$, whereupon the algorithm terminates with $N=i$ and $g_N=\gamma^{(i-1)}$ as the final output curve.
If it is not, perform a Type I cut as in \Cref{cut_I}, again producing an output curve $g_i$ with a ball growth condition \eqref{MorreyTypeI} and a new curve $\gamma^{(i)}$.
By \eqref{LengthBoundTypeIgamma} and \eqref{SmallCountBoundTypeI},  $\gamma^{(i)}$ has strictly smaller length and only a moderate number of additional small geodesic edges compared to $\gamma^{(i-1)}$.
As we shall explain, it follows that this iterative procedure, which we set out formally in \Cref{SurgeryAlg}, must eventually terminate.

Note that $\gamma^{(0)}$ is a $(\delta,\epsilon,0)$-curve by our choice of $\delta$.
It follows that no Type II cuts can occur in the first $n/3$ iterations.

\begin{breakablealgorithm}\label{SurgeryAlg}
\caption{The Surgery Algorithm}
\begin{algorithmic}[1]
\Statex \textbf{Input}: A closed piecewise-geodesic curve $\gamma$ such that all geodesic edges in the curve have length at least $\delta$.
\Statex \textbf{Output}: A number $N\in\mathbb{N}$ and a finite sequence $(g_i)_{i=1}^N$ of closed piecewise-geodesic curves satisfying the conclusions of \Cref{surgery-lem}.
\State $i= 1$;
\State $j= 0$;
\State $T_1 = 0$;
\State $T_2 = 0$;
\State $\gamma^{(0)}=\gamma$;
\While{True}
\While{$\gamma^{(i-1)}$ is not a $(\delta,\epsilon,n)$-curve}
  \State Apply a Type II cut $(\gamma',g):=C_{\mathrm{II}}(\gamma^{(i-1)})$;
  \State $g_i=g$; \Comment{By \Cref{cut_II}, $g_i$ satisfies a ball growth condition.}
  \State $\gamma^{(i)}=\gamma'$;
  \State $i\leftarrow i+1$;
  \State $T_2 \leftarrow T_2+1$;
\EndWhile
\If{$\gamma^{(i-1)}$ is $(\delta,\epsilon)$-l.s.i.}
\State $N := i$;
\State $g_N = \gamma^{(i-1)}$; \Comment{By \Cref{full-ball-lem}, $g_N$ satisfies a ball growth condition.}
\State break;
\Else
  \State Apply a Type I cut $(\gamma',g):=C_{\mathrm{I}}(\gamma^{(i-1)})$ and $\beta = \beta(\gamma^{(i-1)})$.
  \State $g_i=g$; \Comment{By \Cref{cut_I}, $g_i$ satisfies a ball growth condition.}
  \State $\gamma^{(i)}=\gamma'$;
    \State $j\leftarrow j+1$;
  \State $\beta_j = \beta$;
  \State $i\leftarrow i+1$;
  \State $T_1 \leftarrow T_1+1$;
  \EndIf
\EndWhile
\State \Return the number $N$ and the finite sequence $(g_i)_{i=1}^N$
\end{algorithmic}
\end{breakablealgorithm}

Consider any step $i<N$ at which the algorithm has not yet terminated.
Write $T_\mathrm{I}^{(i)}$ for the number of Type I cuts performed up to and including step $i$.
By \eqref{LengthBoundTypeIgamma}, \eqref{LengthBoundTypeIIgamma}, and the lower bound $\beta(\gamma)\geq\delta$ from \Cref{cut_I},
\begin{align}
  \length(\gamma^{(i)}) 
  &\leq \length(\gamma) - \sum_{j=1}^{T_\mathrm{I}^{(i)}} (1-\epsilon)\beta_j 
  \label{LgammabetajBound}
  \\
  &\leq \length(\gamma) - (1-\epsilon)\delta T_\mathrm{I}^{(i)}
  \notag 
  .
\end{align}
In particular, the total number $T_\mathrm{I}$ of Type I cuts is bounded by
\begin{equation*}
  T_\mathrm{I} \leq \frac{\length(\gamma)}{(1-\epsilon)\delta}
  .
\end{equation*}
Similarly, writing $T_\mathrm{II}^{(i)}$ for the number of Type II cuts by step $i$ and using \eqref{SmallCountBoundTypeI}, \eqref{SmallCountBoundTypeII}, and $\nsmallpieces(\gamma^{(0)},\delta)=\nsmallpieces(\gamma,\delta)=0$,
\begin{equation*}
  \nsmallpieces(\gamma^{(i)},\delta) \leq 3 T_\mathrm{I}^{(i)} - n T_\mathrm{II}^{(i)}
\end{equation*}
so that the total number $T_\mathrm{II}$ of Type II cuts is bounded by
\begin{equation}\label{T2Bound}
  T_\mathrm{II} \leq \frac{3}{n} T_\mathrm{I} \leq \frac{3}{n}\frac{\length(\gamma)}{(1-\epsilon)\delta}
  .
\end{equation}
In particular, the total number of cuts is bounded and the algorithm therefore terminates in a finite number $N=T_\mathrm{I}+T_\mathrm{II}+1$ of steps.

By \eqref{SameSumTypeI} and \eqref{SameSumTypeII}, both Type I and Type II cuts lead to $[[\gamma^{(i-1)}]] = [[\gamma^{(i)}]] + [[g_i]]$, and it follows by induction that
\begin{equation}
  [[\gamma]] = [[\gamma^{(i)}]] + \sum_{k=1}^i [[g_k]]
\end{equation}
for $i<N$.
Part~\ref{item:SurgeryMeasureIdentity} of \Cref{surgery-lem} follows by taking $i=N-1$ and recalling that $g_N=\gamma^{(N-1)}$.

For part~\ref{item:SurgeryTotalLength}, note from \eqref{LengthBoundTypeItotal} that the $j^\text{th}$ Type I cut increases the total length of the curves by at most $2\epsilon\beta_j$, while from \eqref{LengthBoundTypeIItotal} each Type II cut increases the total length by at most $4\epsilon^{-1}\delta$.
Thus
\begin{align*}
  \sum_{i=1}^N \length(g_i) &\leq \length(\gamma) + 2\epsilon\sum_{j=1}^{T_\mathrm{I}}\beta_j + 4\epsilon^{-1}\delta T_\mathrm{II}
  .
\end{align*}
From \eqref{LgammabetajBound} we have $\sum_{j=1}^{T_1} \beta_j \leq \length(\gamma)/(1-\epsilon)$.
Combining with \eqref{T2Bound},
\begin{align*}
  \sum_{i=1}^N \length(g_i) &\leq \length(\gamma) + \frac{2\epsilon \length(\gamma)}{1-\epsilon} + \frac{12\epsilon^{-1}\delta \length(\gamma)}{n(1-\epsilon)\delta}
  ,
\end{align*}
and simplifying proves part~\ref{item:SurgeryTotalLength}.

Finally part~\ref{item:SurgeryBallGrowth} follows by applying \eqref{MorreyTypeI}, \eqref{MorreyTypeII} or \Cref{full-ball-lem}, depending on whether $g_i$ was produced from a Type I cut, from a Type II cut, or from the final step of the algorithm, respectively.
\end{proof}

\section{Metric currents without boundary and closed curves}\label{CurrentsAndCurves}

\subsection{Differential forms and metric currents in \texorpdfstring{$E$}{E}}\label{differentialforms}
We here recall a few relevant notations related to the space of $k$-dimensional forms in the metric space.
In the sequel we follow the conventions of \cite{AmbrosioKirchheim}.
Throughout this paper $E$ (unless explicitly stated otherwise) denotes a complete, separable, geodesic metric space and $\operatorname{Lip}(E, \mathbb{R})$ (resp.\ $\operatorname{Lip_b}(E,\mathbb{R}))$ denotes the set of all (resp.\ bounded) Lipschitz maps $f: E \to \mathbb{R}$.
We also denote by
\begin{align*}\label{massmeasure}
  \mathcal{D}^k (E) : = \{ (f, \pi_1, \dots , \pi_k)  \colon f\in \operatorname*{Lip_b}(E, \mathbb{R}), \pi_1,\dotsc,\pi_k \in  \operatorname*{Lip}(E, \mathbb{R}) \}
\end{align*}
the space of $k$-dimensional differential forms on $E$ and by $\mathcal{M}_k(E)$, its ``dual'', the set of all real $k$-dimensional metric current of $E$ in the sense of of Ambrosio and Kirchheim \cite{AmbrosioKirchheim}.

In particular, for every $ T \in \mathcal{M}_k(E)$, there exists a finite Borel measure $\mu $ over $E$ such that
\begin{align}
  |T(f, \pi_1, \dots , \pi_k)| \leq \prod^k_{i=1} \operatorname*{Lip}(\pi_i) \int_E |f| \, d \mu
\end{align}
holds for every $(f, \pi_1, \dots , \pi_k) \in \mathcal{D}^k (E)$, where $\Lip(\pi_i)$ denotes the Lipschitz constant of the function $\pi_i$.
We define the mass measure $\mu_T$ of $T$ to be the minimum over all the finite positive Borel measure $\mu$ satisfying \eqref{massmeasure}.
Moreover, the total mass $\mathbb{M}(T)$ of $T$ is defined to be $\mu_T(E)$.

Given a $k$-dimensional current $T$, we denote by $\partial T$ its boundary defined by the formula
\begin{align*}
  \partial T(\omega):= T(d\omega)
\end{align*}
for all $\omega \in \mathcal{D}^{k-1}(E)$.
Here, $d:\mathcal{D}^{k-1}(E) \to \mathcal{D}^{k}(E)$ denotes the metric exterior derivative operator given by
\begin{align*}
  d(f , \pi_1, \pi_2, \dotsc, \pi_{k-1}) := (1 , f , \pi_1, \pi_2  ,\dotsc,\pi_{k-1})
  .
\end{align*}

We can associate to each Lipschitz curve $\gamma\in\Theta(E)$ a one-dimensional normal current $[[\gamma]]\in\mathcal{M}_1(E)$ defined as follows.
For all 1-dimensional differential forms $\omega=(f, \pi)\in\mathcal{D}^1(E)$, we define the scalar $[[\gamma]](\omega)$ to be the Riemann-Stieltjes integral
\begin{align}\label{currentcurve}
  [[\gamma]] (f , \pi) : = \int_0^{\length(\gamma)} (f \circ \gamma)(t) \; d (\pi \circ \gamma)(t)
  .
\end{align}
Note that if $\gamma$ is a concatenation of curves $\gamma_1,\dotsc,\gamma_k$ then we may split the Riemann-Stieltjes integral \eqref{currentcurve} across the subintervals corresponding to the concatenated curves, so that $[[\gamma]]=[[\gamma_1]]+\dotsb+[[\gamma_k]]$ follows immediately.
This verifies the remaining assertion of \Cref{MorreyIsNorm}.

We note that the Riemann-Stieltjes integral \eqref{currentcurve} is unchanged by increasing reparametrizations of $\gamma$, so the mapping $\gamma\mapsto [[\gamma]]$ can also be interpreted as a mapping on $\tilde{\Theta}(E)$.
On the other hand, if the curve $\tilde{\gamma}$ is the curve $\gamma$ traversed in reverse order then $[[\tilde{\gamma}]](\omega)=-[[\gamma]](\omega)$ for all $\omega\in \mathcal{D}^1(E)$, or more succinctly $[[\gamma]]+[[\tilde{\gamma}]]=0$ as asserted in \Cref{MainResultSubsect} and \eqref{ReversedGeodesics}.

From a Lipschitz curve $\gamma\in\Theta(E)$ parametrized by arc length, we can form two different measures on $E$: $\mu_\gamma$, the image of length measure on $[0,\length(\gamma)]$ under the mapping $\gamma\colon[0,\length(\gamma)]\to E$; and $\mu_{[[\gamma]]}$, the mass measure of the $1$-current $[[\gamma]]\in\mathcal{M}_1(E)$.

\begin{lemma}\label{easyestimateformass}
  Let $\gamma\in\Theta(E)$.
  Then $\mu_{[[\gamma]]}\leq\mu_\gamma$, i.e., $\mu_{[[\gamma]]} (B) \leq  \mu_{\gamma}(B)$ for all Borel sets $B$.
  In particular, $\mathbb{M}([[\gamma]])\leq \length(\gamma)$ and
  \begin{equation*}
    \abs{[[\gamma]] (f, \pi)} \leq \norm{f}_\infty \Lip(\pi) \length(\gamma)
    .
  \end{equation*}
\end{lemma}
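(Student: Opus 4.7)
The plan is to work directly from the Riemann--Stieltjes definition \eqref{currentcurve}. First I would note that because $\gamma$ is parametrized by arc length it is 1-Lipschitz in the sense $d(\gamma(s),\gamma(t))\leq |s-t|$, so for any $\pi\in\Lip(E,\mathbb{R})$ the composition $\pi\circ\gamma\colon [0,\length(\gamma)]\to\mathbb{R}$ is Lipschitz with $\Lip(\pi\circ\gamma)\leq \Lip(\pi)$. In particular $\pi\circ\gamma$ is absolutely continuous, and by Rademacher's theorem it is differentiable a.e.\ with $|(\pi\circ\gamma)'(t)|\leq\Lip(\pi)$.

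Second, I would convert the Riemann--Stieltjes integral into a Lebesgue integral,
\begin{equation*}
  [[\gamma]](f,\pi) = \int_0^{\length(\gamma)} (f\circ\gamma)(t)\,(\pi\circ\gamma)'(t)\,dt,
\end{equation*}
and estimate
\begin{equation*}
  \abs{[[\gamma]](f,\pi)} \leq \Lip(\pi) \int_0^{\length(\gamma)} \abs{f\circ\gamma}(t)\,dt = \Lip(\pi)\int_E \abs{f}\,d\mu_\gamma,
\end{equation*}
where the last equality is the change of variables for the pushforward $\mu_\gamma=\gamma_\ast(\mathcal{L}^1\restriction_{[0,\length(\gamma)]})$ from \eqref{mugammaFormula}.

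Third, the finite Borel measure $\mu_\gamma$ (of total mass $\length(\gamma)<\infty$) is then a valid competitor in \eqref{massmeasure} for $T=[[\gamma]]$ and $k=1$. By the minimality property defining the mass measure $\mu_{[[\gamma]]}$, this gives $\mu_{[[\gamma]]}(B)\leq \mu_\gamma(B)$ for every Borel set $B\subset E$. Specializing to $B=E$ yields $\mathbb{M}([[\gamma]])\leq \length(\gamma)$, and the pointwise bound $\abs{(f\circ\gamma)(t)}\leq \norm{f}_\infty$ applied in the second step gives the final estimate $\abs{[[\gamma]](f,\pi)}\leq \norm{f}_\infty\Lip(\pi)\length(\gamma)$.

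The only step requiring some care is the passage from the integral bound $\abs{[[\gamma]](f,\pi)}\leq \Lip(\pi)\int_E \abs{f}\,d\mu_\gamma$ to the pointwise inequality $\mu_{[[\gamma]]}\leq \mu_\gamma$ on Borel sets. This is the standard minimality property of the Ambrosio--Kirchheim mass measure: $\mu_T$ is characterized as the smallest (in the Borel ordering of measures) finite positive Borel measure for which \eqref{massmeasure} holds, so I would simply invoke this characterization rather than reprove it.
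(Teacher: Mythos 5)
Your proposal is correct and follows essentially the same approach as the paper's proof: both use the Lipschitz bound on $\pi\circ\gamma$ to estimate the Riemann--Stieltjes integral by $\Lip(\pi)\int_E |f|\,d\mu_\gamma$ and then invoke the minimality characterization of the Ambrosio--Kirchheim mass measure. Your extra detour through absolute continuity and a.e.\ differentiability to rewrite the Riemann--Stieltjes integral as a Lebesgue integral is harmless but not needed, since the Lipschitz bound on the integrator already gives the estimate directly.
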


It can be verified that if $\gamma$ is injective, the measures $\mu_{[[\gamma]]}$ and $\mu_\gamma$ coincide, and therefore the total mass $\mathbb{M}([[\gamma]]) = \mu_{[[\gamma]]}(E) $ of the current $[[\gamma]]$ equals the length $\length(\gamma)$.
In general, $\mu_{[[\gamma]]}$ and $\mu_{\gamma}$ may differ, for instance when the curve $\gamma$ traverses the same segment twice in opposite directions.

\begin{proof}
Let $f\in\Lipb(E,\mathbb{R}),\pi\in\Lip(E,\mathbb{R})$ be arbitrary.
Since $\gamma$ is parametrized by arc length, the composition $\pi\circ\gamma\colon\mathbb{R}\to\mathbb{R}$ has Lipschitz constant bounded by $\Lip(\pi)$.
Hence the Riemann-Stieltjes integral \eqref{currentcurve} can be bounded by
\begin{align*}
  \bigl| [[\gamma]] (f, \pi) \bigr| 
  \leq \Lip(\pi) \int_0^{\length(\gamma)} |f(\gamma(t))| \, dt
  = \Lip(\pi) \int_E |f|\, d \mu_\gamma
  ,
\end{align*}
where the last inequality is by the definition of $\mu_\gamma$. 
Thus \eqref{massmeasure} holds with $k=1$, $T=[[\gamma]]$ and $\mu=\mu_\gamma$, and hence $\mu_{[[\gamma]]}\leq \mu_\gamma$ as measures by the minimality of $\mu_{[[\gamma]]}$.
The remaining bounds follow by considering the total masses of these measures.
\end{proof}

For Lipschitz curves $\gamma_1:[a_1,b_1] \to E$ and $\gamma_2: [a_2,b_2] \to E$, define
\begin{multline*}
  d_\Theta (\gamma_1, \gamma_2) 
  \\
  : = \inf \{   \max_{t \in [a_1,b_1]} d(\gamma_1(t), \gamma_2(\phi(t))) \colon \phi\colon[a_1,b_1] \rightarrow [a_2,b_2]\text{ bijective increasing}  \}
  .
\end{multline*}
Then $d_\Theta$ is a pseudometric on the space of all Lipschitz curves and induces a metric on $\tilde{\Theta}(E)$, the space of equivalence classes of Lipschitz curves modulo increasing reparametrization.
Equivalently, the restriction $d_\Theta\vert_{\Theta(E)}$ gives a metric on $\Theta(E)$ directly.
We equip $\Theta(E)$ with the associated Borel $\sigma$-algebra.
We also define
\begin{align*}
  \Theta_l(E): = \{ \gamma \in \Theta(E): \mathbb{M} ([[\gamma]]) = l \}
\end{align*}
and we write $b(\gamma)=\gamma(0)$, $e(\gamma)=\gamma(\length(\gamma))$ for the beginning and ending points, respectively, of the curve $\gamma\in\Theta(E)$.

In \cite{PS}*{Theorem~3.1}, Paolini and Stepanov have shown that if $E$ is a complete, separable metric space and $T \in \mathcal{M}_1(E)$ satisfies $\partial T =0 $, then there exists a positive Borel measure $\overline{\eta}_1 $ on $\Theta_1(E)$ such that
\begin{align}\label{PaoT}
  T(\omega) = \int_{\Theta_1(E)} [[\gamma]] (\omega)\, d\overline{\eta}_1(\gamma)
\end{align}
holds for every $\omega \in \mathcal{D}^1 (E)$ and $\mathbb{M}([[\gamma]]) = \length(\gamma) = 1$ for $\overline{\eta}_1$-a.e $\gamma \in \Theta_1(E) $. Moreover, the mass measure $\mu_T$ of $T$ satisfies
\begin{align}\label{Paophi}
  \int_E \phi(x) \, d \mu_T(x) = \int_{\Theta_1(E)} \phi(b(\gamma)) \, d \overline{\eta}_1(\gamma) = \int_{\Theta_1(E)} \phi(e(\gamma)) \, d \overline{\eta}_1(\gamma)
\end{align}
for every measurable non-negative function $\phi\colon E\to\cointerval{0,\infty}$, and the total mass $\mathbb{M}(T)$ of $T$ is given by $\mathbb{M}(T) = \overline{\eta}_1(\Theta_1(E))$.

Paolini and Stepanov show that this measure on length-1 curves can be concatenated to form a measure on bi-infinite curves \cite{PS}*{Proposition 4.2}.
We will not use the full strength of this statement, but their construction yields as a by-product a sequence of measure $\overline{\eta}_l$ on curves of integer length $l\in\mathbb{N}$ satisfying
\begin{gather}
  \label{weaklengthl}
  T(\omega) = \frac{1}{l} \int_{\Theta_l(E)} [[\gamma]] (\omega) \, d\overline{\eta}_l (\gamma)
  \quad\text{for every }\omega \in \mathcal{D}^1 (E)
  ,
  \\
  \label{lengthlmassmeasure}
  \int_E \phi(x) \, d \mu_T(x) = \int_{\Theta_l(E)} \phi(b(\gamma)) \, d \overline{\eta}_l(\gamma) = \int_{\Theta_l(E)} \phi(e(\gamma)) \, d \overline{\eta}_l(\gamma)
  ,
  \\
  \label{lengthlExactly}
  \mathbb{M}([[\gamma]]) = \length(\gamma ) = l \quad\text{for $\overline{\eta}_l$-a.e.\ }\gamma \in \Theta_l (E)
  .
\end{gather}
In the notation of \cite{PS}*{Proposition 4.2}, the measure $\overline{\eta}_l$ is obtained as the image measure of $\hat{\eta}$ on $C(\mathbb{R},E)$ under the mapping $\gamma\in C(\mathbb{R},E)\mapsto \gamma\vert_{[0,l]}$, with $\gamma\vert_{[0,l]}$ interpreted as an element of $\Theta(E)$.
Modulo differences in notation, the identity \eqref{weaklengthl} is equivalent to \cite{PS}*{Remark~4.3} with $(m,n)=(l,0)$.
According to \cite{PS}*{Proposition~4.2(b)}, the marginal distribution under $\overline{\eta}_l$ of each subcurve $\gamma\vert_{[j-1,j]}$, $j\in\set{1,\dotsc,l}$, coincides with $\overline{\eta}_1$.
Thus \eqref{lengthlExactly} follows because $\overline{\eta}_l$ is supported on curves that are the concatenation of $l$ subcurves each of length 1.
Similarly, \eqref{lengthlmassmeasure} follows by considering $j=1$ and $j=l$,  whereupon \eqref{Paophi} shows that both $b(\gamma\vert_{[0,1]})$ and $e(\gamma\vert_{[l-1,l]})$ follow the distribution $\mu_T$.

We next show how the preceding results of Paolini and Stepanov can be used to decompose metric currents without boundary in terms of closed curves.  
This is a generalization of an assertion of Bourgain and Brezis in the Euclidean context \cites{BourgainBrezis2004,BourgainBrezis2007}.

\begin{theorem}\label{bbassertion}
  Let $E$ be a complete, separable, geodesic metric space.
  Then for $T \in \mathcal{M}_1(E)$ satisfying $T\neq 0$ and $\partial T=0$, there exists a sequence of positive integers $(n_l)_{l=1}^\infty$ tending to $\infty$ and a countable collection of oriented piecewise-geodesic closed curves $\hat{\gamma}_{i,l}$, $l\in\mathbb{N},i\in\set{1,\dotsc,n_l}$, such that $\mathbb{M}(\hat{\gamma}_{i,l}) \leq \length(\hat{\gamma}_{i,l}) \leq 2 l$ for all $i,l$,
  \begin{align} \label{closed1}
    T(\omega)= \lim_{l \to \infty} \frac{\mathbb{M}(T)}{n_l \cdot l} \sum_{i=1}^{n_l}  [[\hat{\gamma}_{i,l} ]](\omega)
  \end{align}
  for all $\omega \in \mathcal{D}^1(E)$, and
  \begin{align} \label{closed2}
    \lim_{l \to \infty} \frac{1}{n_l \cdot l} \sum_{i=1}^{n_l} \mathbb{M}([[\hat{\gamma}_{i,l}]]) = \lim_{l \to \infty} \frac{1}{n_l \cdot l} \sum_{i=1}^{n_l} \length(\hat{\gamma}_{i,l}) =1
    .
  \end{align}
\end{theorem}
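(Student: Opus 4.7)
My plan is to combine the Paolini--Stepanov representation \eqref{weaklengthl} with a discretization step followed by an endpoint-closing step. For each integer $l \geq 1$, identity \eqref{weaklengthl} writes $T(\omega) = \frac{1}{l}\int_{\Theta_l(E)}[[\gamma]](\omega)\, d\overline{\eta}_l(\gamma)$ with $\overline{\eta}_l$ a finite Borel measure on the separable metric space $\Theta_l(E)$ having total mass $\mathbb{M}(T)$; by \eqref{lengthlmassmeasure}, both endpoint marginals of $\overline{\eta}_l$ equal $\mu_T$, so the curves are \emph{closed on average}, which is what makes individual closing feasible with negligible total cost.

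First, I would approximate $\overline{\eta}_l$ by a finitely supported uniform measure $\sum_{i=1}^{n_l} \frac{\mathbb{M}(T)}{n_l}\delta_{\gamma_{i,l}^\ast}$ with $n_l \to \infty$, exploiting separability of $\Theta_l(E)$ together with the continuity of $\gamma \mapsto [[\gamma]](\omega)$ that is established in the appendix. Next, using density of piecewise-geodesic curves in $\Theta(E)$, I would replace each $\gamma_{i,l}^\ast$ by a piecewise-geodesic curve $\tilde{\gamma}_{i,l} \in \Xgeo$ obtained by connecting sufficiently many sample points of $\gamma_{i,l}^\ast$ by geodesic edges; by construction $\length(\tilde{\gamma}_{i,l}) \leq l$, and refining the sampling gives $[[\tilde{\gamma}_{i,l}]](\omega) \approx [[\gamma_{i,l}^\ast]](\omega)$. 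Finally, I would close each $\tilde{\gamma}_{i,l}$ by appending the single geodesic edge $G_{e(\tilde{\gamma}_{i,l}),b(\tilde{\gamma}_{i,l})}$ to obtain a closed piecewise-geodesic curve $\hat{\gamma}_{i,l} \in \Xgeo_c$ with $\length(\hat{\gamma}_{i,l}) \leq l + d(b,e) \leq 2l$. A diagonal argument along a countable dense subset of $\mathcal{D}^1(E)$ would then yield \eqref{closed1} up to the contribution of the closing edges.

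The main obstacle is to verify \eqref{closed2} and simultaneously the vanishing of the closing contribution in \eqref{closed1}: both reduce to the single claim that the average closing distance $\frac{1}{n_l}\sum_{i=1}^{n_l} d(b(\tilde{\gamma}_{i,l}),e(\tilde{\gamma}_{i,l}))$ is $o(l)$ as $l \to \infty$. Granting this, \Cref{easyestimateformass} gives $\bigl|[[G_{b,e}]](\omega)\bigr| \leq \|f\|_\infty \Lip(\pi)\, d(b,e)$, controlling the closing contribution in \eqref{closed1} by a constant multiple of $\frac{1}{n_l\, l}\sum_i d(b_i,e_i) \to 0$, and \eqref{closed2} follows directly from the length bound. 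To prove the closing-distance bound I would use tightness of $\mu_T$: for any $\epsilon>0$, pick a compact $K_\epsilon \subset E$ with $\mu_T(E \setminus K_\epsilon) < \epsilon \mathbb{M}(T)$. The endpoint marginals and a union bound give $\overline{\eta}_l$-mass at most $2\epsilon \mathbb{M}(T)$ to curves with some endpoint outside $K_\epsilon$. Splitting accordingly and using $d(b,e) \leq \diam K_\epsilon$ inside and $d(b,e) \leq l$ outside yields $\int d(b,e)\, d\overline{\eta}_l \leq \mathbb{M}(T)(\diam K_\epsilon + 2\epsilon l)$; passing to the discrete approximations and letting $\epsilon = \epsilon_l \to 0$ slowly as $l \to \infty$ produces the required $o(l)$ bound.
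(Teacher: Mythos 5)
Your outline tracks the paper's proof at a high level: both convert the Paolini--Stepanov integral representation \eqref{weaklengthl} into discrete averages of curves, pass to piecewise-geodesic samplings via \Cref{GeodesicSampledCurrent}, close each curve with a single geodesic edge, and reduce the closing cost to showing the average gap $d(b(\gamma),e(\gamma))$ is $o(l)$ using that both endpoint marginals of $\overline{\eta}_l$ coincide with the finite measure $\mu_T$. Your Varadarajan-style empirical approximation of $\overline{\eta}_l$ plays the same role as the paper's appeal to the Strong Law of Large Numbers, and your tightness argument for the closing distance is equivalent to the paper's use of the indicators $\phi_{r,b},\phi_{r,e}$ and $\mu_T(\set{x\colon d(x,x_0)\geq r})\to 0$.

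The genuine gap lies in the single sentence ``a diagonal argument along a countable dense subset of $\mathcal{D}^1(E)$ would then yield \eqref{closed1}.'' The discrete approximation, even after diagonalizing over $\delta$ and $l$, only gives convergence for each \emph{fixed} test pair $(\tilde{f}_k,\tilde{\pi}_j)$ from the countable family. To pass to a general $(f,\pi)$ you must show that
\begin{equation*}
  \frac{1}{n_l\cdot l}\sum_{i=1}^{n_l}\abs{\bigl.[[\gamma^{\delta_l}_{i,l}]](f,\pi)-[[\gamma^{\delta_l}_{i,l}]](\tilde{f}_k,\tilde{\pi}_j)}
\end{equation*}
vanishes \emph{uniformly in $l$} as $j,k\to\infty$, and this is not a consequence of the pointwise continuity of $\gamma\mapsto[[\gamma]](\omega)$ you cite: the rate at which $[[\gamma]](\tilde{f}_k,\tilde{\pi}_j)\to[[\gamma]](f,\pi)$ depends on $\gamma$, while the curves $\gamma^{\delta_l}_{i,l}$ change with $l$. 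This is precisely why the paper introduces the majorizing functionals $\psi_{m,Q,\epsilon,C},\tilde{\psi}_{m,Q,\epsilon,C}$ of \Cref{pi_bounds}: the same empirical-averaging device that produces the curves is applied \emph{a second time} to these majorants (see \eqref{stronglawpsi}--\eqref{diag_psi}), whose limits are then shown to vanish in the correct order via \Cref{psi_vanishes,etaRepeats}. Without that machinery, or an equivalent equicontinuity/uniform-integrability argument, the extension to general $\omega$ does not go through. A secondary omission: your reasoning gives only the upper bound $\limsup\leq 1$ in \eqref{closed2}. Since piecewise-geodesic sampling can strictly decrease length, $\length(\gamma^{\delta_l}_{i,l})\leq l$ is one-sided, and the matching lower bound requires a lower-semicontinuity argument for mass applied to \eqref{closed1}, as the paper does via Ambrosio--Kirchheim's mass formula.
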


The proof of \autoref{bbassertion} is based on the use of the strong law of large numbers to obtain initial curves, approximating these curves by piecewise-geodesic interpolations, and finally adding segments to close the curves.  To this end it will be useful to recall the following definition and a lemma concerning the convergence of piecewise-geodesic interpolations of a current.

\begin{definition}\label{sampling}
  Let $\gamma\in\Theta(E)$ be a non-constant curve parametrized by arc length, and let $0<\delta<\length(\gamma)$.
  Set $k=\lceil\length(\gamma)/\delta\rceil$ and set $s_j=\min(j\delta,\length(\gamma))$ for $j=0,\dotsc,k$.
  We define $\gamma^\delta$, the piecewise-geodesic sampling of $\gamma$ at scale $\delta$, to be the concatenation of the geodesics $G_{\gamma(s_{j-1}),\gamma(s_j)}$ for $j=1,\dotsc,k$.
\end{definition}

\begin{lemma}\label{GeodesicSampledCurrent}
  Let $\gamma\in\Theta(E)$ and let $0< \delta < \length(\gamma)$.
  Then $\length(\gamma^\delta)\leq\length(\gamma)$ and
  \begin{equation*}
    \lim_{\delta \to 0}  [[\gamma^\delta]](\omega)= [[\gamma]](\omega)
  \end{equation*}
  for every $\omega \in \mathcal{D}^1 (E)$.
\end{lemma}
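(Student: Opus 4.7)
The length bound is immediate from the arc-length parametrization: for each $j$, the 1-Lipschitz property gives $d(\gamma(s_{j-1}),\gamma(s_j))\leq s_j-s_{j-1}$, so $\length(\gamma^\delta)=\sum_{j=1}^k d(\gamma(s_{j-1}),\gamma(s_j))\leq \sum_{j=1}^k(s_j-s_{j-1})=\length(\gamma)$.

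For the pointwise convergence on $\mathcal{D}^1(E)$, fix $\omega=(f,\pi)$ and abbreviate $d_j:=d(\gamma(s_{j-1}),\gamma(s_j))$ and $G_j:=G_{\gamma(s_{j-1}),\gamma(s_j)}$. The plan is to use the additivity of $[[\,\cdot\,]]$ under concatenation (noted right after~\eqref{currentcurve}) to split both Riemann--Stieltjes integrals across the sampling points $s_0,\dotsc,s_k$, and then to compare each piece to the common ``constant-$f$'' value $f(\gamma(s_{j-1}))\bigl(\pi(\gamma(s_j))-\pi(\gamma(s_{j-1}))\bigr)$, which is the value one gets on either piece if $f$ is replaced by the constant $f(\gamma(s_{j-1}))$.

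The two comparison estimates come from the standard Riemann--Stieltjes bound, valid for bounded $h$ and Lipschitz (hence BV) $g$ on $[a,b]$, that $\bigl|\int_a^b h\,dg-h(a)(g(b)-g(a))\bigr|$ is at most the oscillation of $h$ on $[a,b]$ times the total variation of $g$ on $[a,b]$. On the $\gamma$-piece $[s_{j-1},s_j]$, the oscillation of $f\circ\gamma$ is at most $\Lip(f)(s_j-s_{j-1})\leq \Lip(f)\delta$ while the variation of $\pi\circ\gamma$ is at most $\Lip(\pi)(s_j-s_{j-1})$; on the geodesic piece $G_j$, which is an isometry of length $d_j\leq s_j-s_{j-1}\leq\delta$, the analogous bounds are $\Lip(f)d_j$ and $\Lip(\pi)d_j$. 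Each $j$-summand of $[[\gamma]](f,\pi)-[[\gamma^\delta]](f,\pi)$ is therefore at most $\Lip(f)\Lip(\pi)\,\delta\,(s_j-s_{j-1}+d_j)$, and summing yields
\begin{equation*}
\bigl|[[\gamma]](f,\pi)-[[\gamma^\delta]](f,\pi)\bigr|\leq \Lip(f)\Lip(\pi)\,\delta\,\bigl(\length(\gamma)+\length(\gamma^\delta)\bigr)\leq 2\Lip(f)\Lip(\pi)\,\delta\,\length(\gamma),
\end{equation*}
which vanishes as $\delta\to 0$.

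The only mildly delicate point is the oscillation-times-variation inequality for Lipschitz integrands, but since $\pi\circ\gamma$ and $\pi\circ G_j$ are Lipschitz and $f$ is continuous, this is the classical Darboux estimate and presents no genuine obstacle; no refinement of the sampling construction or new machinery is needed.
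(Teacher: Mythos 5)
Your proof is correct, and it takes a genuinely different route from the paper's. The paper establishes $d_\Theta(\gamma^\delta,\gamma)\leq 2\delta$ (since each geodesic edge of $\gamma^\delta$ shares endpoints with the corresponding short subcurve of $\gamma$) and then invokes a continuity result of Paolini--Stepanov, namely \cite{PS_2012}*{Lemma~4.1}, which says that the map $\gamma\mapsto[[\gamma]](\omega)$ is continuous along $d_\Theta$-convergent sequences with bounded length. Your approach instead splits both Riemann--Stieltjes integrals across the sampling partition, compares each piece to the common value $f(\gamma(s_{j-1}))\bigl(\pi(\gamma(s_j))-\pi(\gamma(s_{j-1}))\bigr)$ via the oscillation-times-variation estimate, and sums, arriving at the explicit bound $\bigl|[[\gamma]](\omega)-[[\gamma^\delta]](\omega)\bigr|\leq 2\Lip(f)\Lip(\pi)\,\delta\,\length(\gamma)$. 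Your version is more elementary and self-contained (it avoids the external citation entirely) and yields a quantitative $O(\delta)$ rate that the paper's argument does not make explicit, whereas the paper's version is shorter and naturally reuses machinery already in scope. Both proofs of the length inequality are the same observation, that geodesics minimize distance between the sampled points.
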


\begin{proof}
Each of the subcurves $\gamma_{[s_{j-1},s_j]}$ has length at most $\delta$ by construction.
Because the geodesics $G_{\gamma(s_{j-1}),\gamma(s_j)}$ are length-minimizing, $\length(\gamma^\delta) \leq \length(\gamma)$ for all $\delta>0$.
Moreover, each of the geodesic edges of $\gamma^\delta$ has length at most $\delta$ and has the same endpoints as the corresponding subcurve $\gamma_{[s_{j-1},s_j]}$.
It follows that 
\begin{equation*}
  d_\Theta(G_{\gamma(s_{j-1}),\gamma(s_j)},\gamma_{[s_{j-1},s_j]})\leq 2\delta
\end{equation*}
and consequently $d_\Theta(\gamma^\delta,\gamma)\leq 2\delta$.
In particular, $\gamma^\delta \to \gamma$ in $\Theta(E)$ as $\delta\to 0$.
Hence we may apply \cite{PS_2012}*{Lemma~4.1} to conclude that $[[\gamma^\delta]](\omega)\to [[\gamma]](\omega)$ as $\delta\to 0$, for all $\omega \in \mathcal{D}^1 (E)$.
\end{proof}

We now prove \autoref{bbassertion}.
\begin{proof}[Proof of \autoref{bbassertion}]
Fix $T \in \mathcal{M}_1{(E)}$ satisfying $T\neq 0$ and $\partial T=0$.
Since $T\neq 0$, by scaling we may assume $\mathbb{M}(T)=1$.

In order to apply appropriate diagonal arguments, we begin by reducing to a countable collection of limits.
Let $\set{\tilde{\pi}_j}_{j\in\mathbb{N}}$ be the countable dense collection of bounded Lipschitz functions given in \Cref{Lipschitz_separable}.
To make the notation more consistent, we let $\{\tilde{f}_k\}_{k\in\mathbb{N}}$ denote a copy of the same countable dense collection with a different indexing variable.
Fix an arbitrary reference point $x_0\in E$.
For $r\in\mathbb{Q}\cap\cointerval{0,\infty}$, let $\phi_{r,b},\phi_{r,e}\colon\Theta(E)\to[0,1]$ be the indicator functions
\begin{equation}\label{EndpointCutoffs}
  \phi_{r,b}(\gamma)=\begin{cases}
    1&\text{if }d(x_0,b(\gamma))\geq r,\\
    0&\text{otherwise},
  \end{cases}
  \quad
  \phi_{r,e}(\gamma)=\begin{cases}
    1&\text{if }d(x_0,e(\gamma))\geq r,\\
    0&\text{otherwise}.
  \end{cases}
\end{equation}

With these preparations, the functions
\begin{equation*}
  \gamma\mapsto [[\gamma]](\tilde{f}_k,\tilde{\pi}_j)
  ,\quad j,k\in\mathbb{N}
  ,
\end{equation*}
and the functions $\phi_{r,b},\phi_{r,e}$ over all $r\in\mathbb{Q}\cap\cointerval{0,\infty}$, form a countable collection of real-valued measurable functions on $\Theta(E)$, each of which is integrable with respect to the measures $\overline{\eta}_l$ from \eqref{weaklengthl}--\eqref{lengthlExactly}, for all $l\in\mathbb{N}$.
The same is true for the functions $\psi_{m,Q,\epsilon,C},\tilde{\psi}_{m,Q,\epsilon,C}$ from \Cref{pi_bounds}.
Throughout, the functions $\psi_{m,Q,\epsilon,C},\tilde{\psi}_{m,Q,\epsilon,C}$ are countably indexed over all $m\in\mathbb{N}$; all finite subsets $Q\subset E_\mathbb{Q}$, where $E_\mathbb{Q}$ denotes a fixed countable dense subset of the separable metric space $E$; and all $\epsilon,C\in\mathbb{Q}\cap\cointerval{0,\infty}$.
For brevity we will simply say ``all admissible choices of $m,Q,\epsilon,C$'' in what follows.

Following the same argument as in \cite{GHS}*{Theorem~2.1}, the Strong Law of Large Numbers allows us to deduce from Paolini and Stepanov's identities \eqref{weaklengthl}--\eqref{lengthlExactly} that there exist countable collections of Lipschitz curves $\gamma_{i,l}\in\Theta(E)$, $i,l\in\mathbb{N}$, such that
\begin{gather}
  \label{countable_identity}
  T(\tilde{f}_k,\tilde{\pi}_j) = \lim\limits_{ n \to \infty} \frac{1}{n \cdot l} \sum^{n}_{i = 1} [[\gamma_{i,l}]] (\tilde{f}_k,\tilde{\pi}_j)
  \quad\text{for all }j,k,l\in\mathbb{N}
  ,
  \\
  \label{stronglawmassmeasure}
  \mu_T(\set{x\in E\colon d(x,x_0)\geq r}) = \lim_{ n \to \infty} \frac{1}{n} \sum_{i=1}^n \phi_{r,b}(\gamma_{i,l}) = \lim_{ n \to \infty} \frac{1}{n} \sum_{i=1}^n \phi_{r,e}(\gamma_{i,l})
\end{gather}
for all $r\in\mathbb{Q}\cap\cointerval{0,\infty}$ and all $l\in\mathbb{N}$, and
\begin{gather}
  \label{stronglawlength}
  \mathbb{M}([[\gamma_{i,l}]]) = \length(\gamma_{i,l}) = l
  \quad\text{for all }i,l\in\mathbb{N}
  ,
\end{gather}
and also such that
\begin{equation}\label{stronglawpsi}
  \begin{aligned}
  \int_{\Theta_l(E)}  \psi_{m\cdot l,Q,\epsilon,C}(\gamma)  \, d\overline{\eta}_l (\gamma) &= \lim\limits_{ n \to \infty} \frac{1}{n} \sum^{n}_{i = 1}    \psi_{m\cdot l,Q,\epsilon,C}(\gamma_{i,l})
  ,
  \\
  \int_{\Theta_l(E)}  \tilde{\psi}_{m\cdot l,Q,\epsilon,C}(\gamma)  \, d\overline{\eta}_l (\gamma) &= \lim\limits_{ n \to \infty} \frac{1}{n} \sum^{n}_{i = 1}    \tilde{\psi}_{m\cdot l,Q,\epsilon,C}(\gamma_{i,l})
\end{aligned}
\end{equation}
hold for all $l\in\mathbb{N}$ and for all admissible choices of $m,Q,\epsilon,C$.

By \Cref{GeodesicSampledCurrent}, replacing $\gamma_{i,l}$ by the piecewise-geodesic sampling $\gamma^\delta_{i,l}$ and taking $\delta\to 0$ leaves \eqref{countable_identity} unchanged:
\begin{equation*}
  T(\tilde{f}_k,\tilde{\pi}_j) = \lim_{n \to \infty} \lim_{\delta\to 0} \frac{1}{n \cdot l} \sum^{n}_{i = 1} [[\gamma^\delta_{i,l}]] (\tilde{f}_k,\tilde{\pi}_j)
  \quad\text{for all }j,k,l\in\mathbb{N}
  .
\end{equation*}
An analogous result holds for \eqref{stronglawpsi} because of \Cref{psiSampling}.
Meanwhile the property \eqref{stronglawmassmeasure} is unaffected by piecewise-geodesic sampling since $\gamma_{i,l}$ and $\gamma^\delta_{i,l}$ have the same beginning and ending points.
Since \eqref{countable_identity} and \eqref{stronglawpsi} involve countably many limits, we may use a diagonal argument to find a sequence $\tilde{\delta}_n\to 0$ such that 
\begin{gather}
  T(\tilde{f}_k,\tilde{\pi}_j) = \lim_{n \to \infty} \frac{1}{n \cdot l} \sum^{n}_{i = 1} [[\gamma^{\tilde{\delta}_n}_{i,l}]] (\tilde{f}_k,\tilde{\pi}_j)
  ,
  \label{TidentityFixedl}
  \\
  \begin{aligned}
    \int_{\Theta_l(E)} \frac{1}{l}\psi_{m\cdot l,Q,\epsilon,C}(\gamma) \, d\overline{\eta}_l (\gamma) &= \lim\limits_{ n \to \infty} \frac{1}{n\cdot l} \sum^{n}_{i = 1}    \psi_{m\cdot l,Q,\epsilon,C}(\gamma^{\tilde{\delta}_n}_{i,l})
    ,
    \\
    \int_{\Theta_l(E)} \frac{1}{l}\tilde{\psi}_{m\cdot l,Q,\epsilon,C}(\gamma) \, d\overline{\eta}_l (\gamma) &= \lim\limits_{ n \to \infty} \frac{1}{n\cdot l} \sum^{n}_{i = 1}    \tilde{\psi}_{m\cdot l,Q,\epsilon,C}(\gamma^{\tilde{\delta}_n}_{i,l})
  \end{aligned}
  \label{psiIdentitiesFixedl}
\end{gather}
for all $j,k,l\in\mathbb{N}$ and for all admissible choices of $m,Q,\epsilon,C$.

By \Cref{etaRepeats}, the integrals in \eqref{psiIdentitiesFixedl} do not depend on $l$.
Hence we may apply a further diagonal argument to the countable many limits in  \eqref{TidentityFixedl}--\eqref{psiIdentitiesFixedl} and \eqref{stronglawmassmeasure} to obtain subsequences $n_l, l\in\mathbb{N}$ and $\delta_l := \tilde{\delta}_{n_l}$ such that $n_l\in\mathbb{N}$, $n_l\to\infty$ as $l\to\infty$ and
\begin{gather}
  T(\tilde{f}_k,\tilde{\pi}_j) = \lim_{l \to \infty} \frac{1}{n_l \cdot l} \sum_{i = 1}^{n_l} [[\gamma^{\delta_l}_{i,l}]] (\tilde{f}_k,\tilde{\pi}_j)
  \label{diag_T}
  ,
  \\
  \begin{aligned}
    \int_{\Theta_1(E)} \psi_{m,Q,\epsilon,C}(\gamma) \, d\overline{\eta}_1(\gamma) &= \lim_{ l \to \infty} \frac{1}{n_l\cdot l} \sum_{i = 1}^{n_l}    \psi_{m\cdot l,Q,\epsilon,C}(\gamma^{\delta_l}_{i,l})
    ,
    \\
    \int_{\Theta_1(E)} \tilde{\psi}_{m,Q,\epsilon,C}(\gamma) \, d\overline{\eta}_1 (\gamma) &= \lim_{ l \to \infty} \frac{1}{n_l\cdot l} \sum_{i = 1}^{n_l}    \tilde{\psi}_{m\cdot l,Q,\epsilon,C}(\gamma^{\delta_l}_{i,l})
    ,
  \end{aligned}
  \label{diag_psi}
  \\
  \mu_T(\set{x\in E\colon d(x,x_0)\geq r}) = \lim_{ l \to \infty} \frac{1}{n_l} \sum_{i=1}^{n_l} \phi_{r,b}(\gamma_{i,l}) = \lim_{ l \to \infty} \frac{1}{n_l} \sum_{i=1}^{n_l} \phi_{r,e}(\gamma_{i,l})
  \label{diag_phi}
\end{gather}
for all $j,k\in\mathbb{N}$, for all $r\in\mathbb{Q}\cap\cointerval{0,\infty}$, and for all admissible choices of $m,Q,\epsilon,C$.

We next show that \eqref{diag_T} holds with $\tilde{f}_k,\tilde{\pi}_j$ replaced by general arguments. 
By \Cref{Lipschitz_separable}, we may find subsequences $(f_k)_{k\in\mathbb{N}},(\pi_j)_{j\in\mathbb{N}}$ chosen from the countable families $\{\tilde{f}_k\}_{k \in \mathbb{N}}, \{\tilde{\pi}_j\}_{j \in \mathbb{N}}$ such that
\begin{align}
  f_k &\to f\text{ pointwise}, \quad \norm{f_k}_\infty\leq \norm{f}_\infty,\: \Lip(f_k)\leq\Lip(f),\: \Lip(f_k)\to \Lip(f)
  \label{f_converge}
  , 
  \\
  \pi_j &\to \pi\text{ pointwise}, \quad \Lip(\pi_j)\leq\Lip(\pi),\: \Lip(\pi_j) \to \Lip(\pi) 
  \label{pi_converge}
  .
\end{align}
Define
\begin{equation*}
  C:= \lceil\max\set{ \norm{f}_\infty,\Lip(f),\Lip(\pi)}\rceil + 1
  ,
\end{equation*}
so that by construction $C\in\mathbb{Q}\cap(0,\infty)$ and
\begin{equation}\label{BoundByC}
  \norm{f}_\infty,\norm{f_k},\Lip(f),\Lip(f_k),\Lip(\pi),\Lip(\pi_j)\leq C\text{ for all $j,k$}
  .
\end{equation}

We recall that by definition, a metric $1$-current $T$ is continuous with respect to the pointwise convergence \eqref{pi_converge}, while the continuity with respect to \eqref{f_converge} follows from \eqref{massmeasure} and Lebesgue's dominated convergence theorem, and therefore
\begin{equation}\label{TfpiTfkpij}
  T(f,\pi) = \lim_{k \to \infty} \lim_{j \to \infty} T(f_k,\pi_j) =    \lim_{j \to \infty} \lim_{k \to \infty} T(f_k,\pi_j)
  .
\end{equation}
Fixing $k,j$ for the moment, note that \eqref{diag_T} applies to $f_k,\pi_j$ and use multilinearity to find
\begin{align*}
  &\limsup_{l\to\infty} \abs{T(f_k,\pi_j) - \frac{1}{n_l\cdot l}\sum_{i=1}^{n_l} [[\gamma^{\delta_l}_{i,l}]](f,\pi)} 
  \\&\quad
  = \limsup_{l\to\infty}\abs{\frac{1}{n_l\cdot l}\sum_{i=1}^{n_l} \left( [[\gamma^{\delta_l}_{i,l}]](f_k,\pi_j) - [[\gamma^{\delta_l}_{i,l}]](f,\pi) \right)} 
  \\&\quad
  \leq \limsup_{ l \to \infty} \frac{1}{n_l \cdot l} \sum_{i = 1}^{n_l} \abs{[[\gamma^{\delta_l}_{i,l}]] (f_k-f,\pi_j)}
  + \limsup_{ l \to \infty} \frac{1}{n_l \cdot l} \sum_{i = 1}^{n_l} \abs{[[\gamma^{\delta_l}_{i,l}]] (f,\pi_j-\pi)}
  .
\end{align*}
Let $m\in\mathbb{N}$, let $Q\subset E_\mathbb{Q}$ be finite, and let $\epsilon\in\mathbb{Q}\cap(0,\infty)$.
By \eqref{f_converge}--\eqref{pi_converge} there exist $j_0(Q,\epsilon)$ and $k_0(Q,\epsilon)$ such that $\sup_{q\in Q}\abs{\pi_j(q)-\pi(q)}\leq\epsilon$ for all $j\geq j_0(Q,\epsilon)$ and $\sup_{q\in Q}\abs{f_k(q)-f(q)}\leq\epsilon$ for all $k\geq k_0(Q,\epsilon)$.
Together with \eqref{BoundByC}, we may therefore apply \Cref{pi_bounds} for this range of $j,k$ and with $m$ replaced by $m\cdot l$.
Thus
\begin{align*}
  &\limsup_{j,k,l\to\infty} \abs{T(f_k,\pi_j) - \frac{1}{n_l\cdot l}\sum_{i=1}^{n_l} [[\gamma^{\delta_l}_{i,l}]](f,\pi)}
  \\&\quad
  \leq \limsup_{j,k,l\to\infty} \frac{1}{n_l \cdot l} \sum_{i = 1}^{n_l} \tilde{\psi}_{m\cdot l,Q,\epsilon,C}(\gamma^{\delta_l}_{i,l})
  + \limsup_{j,k,l\to\infty} \frac{1}{n_l \cdot l} \sum_{i = 1}^{n_l} \psi_{m\cdot l,Q,\epsilon,C}(\gamma^{\delta_l}_{i,l})
  \\&\quad
  = \int_{\Theta_1(E)} \tilde{\psi}_{m,Q,\epsilon,C}(\gamma) \, d\overline{\eta}_1 (\gamma) + \int_{\Theta_1(E)} \psi_{m,Q,\epsilon,C}(\gamma) \, d\overline{\eta}_1 (\gamma)
  ,
\end{align*}
where the last equality uses \eqref{diag_psi}.
The left-hand side of this inequality does not depend on $m,Q,\epsilon$, so following \Cref{psi_vanishes} we may take $\epsilon\to 0$, $Q\nearrow E_\mathbb{Q}$, $m\to\infty$ in that order to obtain
\begin{equation*}
  \limsup_{j,k,l\to\infty} \abs{T(f_k,\pi_j) - \frac{1}{n_l\cdot l}\sum_{i=1}^{n_l} [[\gamma^{\delta_l}_{i,l}]](f,\pi)} = 0
  .
\end{equation*}
The first term does not depend on $l$ and the second term does not depend on $j,k$, so we conclude that
\begin{equation*}
  \lim_{j,k\to\infty} T(f_k,\pi_j) = \lim_{l\to\infty} \frac{1}{n_l\cdot l}\sum_{i=1}^{n_l} [[\gamma^{\delta_l}_{i,l}]](f,\pi)
\end{equation*}
and thus \eqref{TfpiTfkpij} gives
\begin{equation}\label{diagonalization_general}
  T(f,\pi) = \lim_{l\to\infty} \frac{1}{n_l\cdot l}\sum_{i=1}^{n_l} [[\gamma^{\delta_l}_{i,l}]](f,\pi) 
\end{equation}
for all $f\in\Lip(E,\mathbb{R}),\pi\in\Lipb(E,\mathbb{R})$, as claimed.

We next close the loops.
Let $\hat{\gamma}_{i,l}$ be the piecewise-geodesic closed curve formed by concatenating $\gamma^{\delta_l}_{i,l}$ with the geodesic $G_{e(\gamma_{i,l}), b(\gamma_{i,l})}$ connecting the endpoints of $\gamma_{i,l}$ in reverse order, and let $\overline{\gamma}_{i,l}=G_{b(\gamma_{i,l}), e(\gamma_{i,l})}$ denote the same geodesic traversed in the other direction.
Here we have used the fact that $\gamma^{\delta_l}_{i,l}$ and $\gamma_{i,l}$ have the same endpoints.
From \Cref{MorreyIsNorm} and \eqref{ReversedGeodesics} it follows that \eqref{diagonalization_general} can be rewritten as
\begin{align}\label{important_identity}
  T(f,\pi) = \lim_{l \to \infty} \left(\frac{1}{n_l\cdot l} \sum_{i=1}^{n_l} [[ \hat{\gamma}_{i,l} ]](f,\pi) + \frac{1}{n_l\cdot l} \sum_{i=1}^{n_l} [[ \overline{\gamma}_{i,l} ]](f,\pi)\right)
\end{align}
for all $l\in\mathbb{N},f\in\Lip(E,\mathbb{R}),\pi\in\Lipb(E,\mathbb{R})$.
As we shall demonstrate, the last sum becomes negligible in the limit $l\to\infty$, and we can see this by bounding the lengths of the geodesics $\overline{\gamma}_{i,l}$.

Since $\overline{\gamma}_{i,l}$ is a geodesic between the starting and ending points of $\gamma_{i,l}$, 
\begin{equation*}
  \length(\overline{\gamma}_{i,l})=d(b(\gamma_{i,l}), e(\gamma_{i,l}))
  .
\end{equation*}
We split according to whether the beginning and ending points are distant or not.
Recall the indicator functions $\phi_{r,b},\phi_{r,e}$ from \eqref{EndpointCutoffs}, where $r\in\mathbb{Q}\cap(0,\infty)$.
We have
\begin{equation*}
  \length(\overline{\gamma}_{i,l}) \leq 2r + l\phi_{r,b}(\gamma_{i,l}) +l\phi_{r,e}(\gamma_{i,l})
  .
\end{equation*}
To see this, note that if the beginning and ending points are both within distance $r$ of the fixed reference point $x_0$ then they are at most $2r$ apart, and otherwise they are separated by at most the length $\length(\gamma_{i,l})$, which is $l$ by \eqref{stronglawlength}.
In particular,
\begin{align*}
  &\limsup_{l\to\infty} \frac{1}{n_l\cdot l} \sum_{i=1}^{n_l} \length(\overline{\gamma}_{i,l}) 
  \\&\quad
  \leq \limsup_{l\to\infty} \left(\frac{2r}{l} + \frac{1}{n_l}\sum_{i=1}^{n_l} \phi_{r,b}(\gamma_{i,l}) + \frac{1}{n_l}\sum_{i=1}^{n_l} \phi_{r,e}(\gamma_{i,l})\right)
  \\&\quad
  = 2\mu_T(\set{x\in E\colon d(x,x_0)\geq r})
\end{align*}
by \eqref{diag_phi}.
The left-hand side does not depend on the choice of $r\in\mathbb{Q}\cap(0,\infty)$, so we may send $r\to\infty, r\in\mathbb{Q}$.
Since $\mu_T$ is a finite measure and the sets $\set{x\in E\colon d(x,x_0)\geq r}$ are decreasing in $r$ with empty intersection, continuity of measure gives $\mu_T(\set{x\in E\colon d(x,x_0)\geq r})\to 0$ as $r\to\infty$.
We conclude that
\begin{equation}\label{ExtraLengthSmall}
  \limsup_{l\to\infty} \frac{1}{n_l\cdot l} \sum_{i=1}^{n_l} \length(\overline{\gamma}_{i,l}) = 0
  .
\end{equation}

The rest of the proof follows easily.
By \Cref{easyestimateformass},
\begin{align*}
  \limsup_{l\to\infty} \abs{\frac{1}{n_l\cdot l} \sum_{i=1}^{n_l} [[ \overline{\gamma}_{i,l} ]](f,\pi)} 
  \leq 
  \norm{f}_\infty \Lip(\pi) \limsup_{l\to\infty} \frac{1}{n_l\cdot l} \sum_{i=1}^{n_l} \length(\overline{\gamma}_{i,l}) = 0
\end{align*}
so \eqref{closed1} follows from \eqref{important_identity}.

We next establish \eqref{closed2}.  
For the upper bound, we observe that the inequalities
\begin{align*}
  \mathbb{M}(\hat{\gamma}_{i,l})&\leq \length(\hat{\gamma}_{i,l}) = \length(\gamma^{\delta_l}_{i,l}) + \length(\overline{\gamma}_{i,l}) \leq \length(\gamma_{i,l}) + \length(\overline{\gamma}_{i,l}) = l+\length(\overline{\gamma}_{i,l}) 
\end{align*}
and the convergence \eqref{ExtraLengthSmall} together imply
\begin{align*}
  \lim_{l\to\infty} \frac{1}{n_l\cdot l}\sum_{i=1}^{n_l} \mathbb{M}(\hat{\gamma}_{i,l}) \leq \lim_{l\to\infty} \frac{1}{n_l\cdot l}\sum_{i=1}^{n_l}\length(\hat{\gamma}_{i,l}) \leq 1.
\end{align*}
The matching lower bound follows from a lower semi-continuity argument.
For a $1$-current $T$,  Ambrosio and Kirchheim's \cite{AmbrosioKirchheim}*{Proposition 2.7} asserts that the mass of $T$ can be computed as 
\begin{align*}
  \mathbb{M}(T) = \sup \sum_{p=1}^\infty \left|T(f_p,\pi_p)\right| 
\end{align*}
where the supremum is over sequences $f_p \in \Lipb(E)$ and $\pi_p \in \Lip(E)$ such that $\Lip(\pi_p) \leq 1$ for all $p$ and $\sum_p |f_p| \leq 1$.  
Suppose $f_p, \pi_p$ satisfy these assumptions.  
The identity \eqref{diagonalization_general} states that
\begin{align*}
 \left|T(f_p,\pi_p)\right|  = \left|\lim_{l \to \infty}  \frac{1}{n_l\cdot l}\sum_{i=1}^{n_l} [[\hat{\gamma}_{i,l}]](f_p,\pi_p) \right|
 .
\end{align*}
Applying Fatou's lemma for series (i.e., for the $\sigma$-finite measure space $\mathbb{N}$ under counting measure) and \Cref{easyestimateformass} yields
\begin{align*}
 \sum_{p=1}^\infty \left|T(f_p,\pi_p)\right|  &=  \sum_{p=1}^\infty \left|\lim_{l \to \infty} \frac{1}{n\cdot l}\sum_{i=1}^{n} [[\hat{\gamma}_{i,l}]](f_p,\pi_p) \right|\\
 &\leq  \liminf_{l \to \infty} \frac{1}{n_l\cdot l}\sum_{i=1}^{n_l}  \sum_{p=1}^\infty \left|[[\hat{\gamma}_{i,l}]](f_p,\pi_p) \right| \\
 &\leq \liminf_{l \to \infty}  \frac{1}{n\cdot l} \sum_{i=1}^{n}\mathbb{M}([[\hat{\gamma}_{i,l}]])\\
 &\leq 1
 .
\end{align*}
Taking the supremum over $f_p,\pi_p$ and recalling that $\mathbb{M}(T)=1$ by assumption,
\begin{align*}
  1 = \mathbb{M}(T) \leq \liminf_{l \to \infty}  \frac{1}{n_l\cdot l} \sum_{i=1}^{n_l}\mathbb{M}([[\hat{\gamma}_{i,l}]]) \leq 1,
\end{align*} 
which completes the proof of \eqref{closed2}.  

Finally, note that the uniform upper bound $\mathbb{M}(\hat{\gamma}_{i,l})\leq \length(\hat{\gamma}_{i,l}) \leq 2l$ follows from $\length(\gamma^{\delta_l}_{i,l}) \leq \length(\gamma_{i,l}) = l$ and the fact that concatenating a path with a geodesic between its endpoints can at most double the length.
\end{proof}

\section{Proof of the main result}\label{ProofOfMainResult}

Finally, we use \Cref{surgery-eta} and \Cref{bbassertion} to prove our main result.

\begin{proof}[Proof of \Cref{approximationMetric}]
Fix $T$ and $0<\epsilon<1$.
The case $T=0$ is trivial since we may take $\lambda_{i,n}=0$ for all $i,n$.
Otherwise, let $(n_l)_{l\in\mathbb{N}}$ and $(\hat{\gamma}_{j,l})_{l\in\mathbb{N},1\leq j\leq n_l}$ be the positive integers and closed piecewise-geodesic curves guaranteed by \Cref{bbassertion}.
Apply \Cref{surgery-eta} to each $\hat{\gamma}_{j,l}$ with $\eta=\tfrac{1}{2}\epsilon$ to produce positive integers $\tilde{N}_{j,l}$ and finite sequences $(\tilde{\gamma}_{j,l,k})_{1\leq k\leq N_{j,l}}$ of closed piecewise-geodesic curves.
For each $l\in\mathbb{N}$, set $N_l = \sum_{j=1}^{n_l} \hat{N}_{j,l}$ and let $(\bar{\gamma}_{i,l})_{1\leq i\leq N_l}$ be the sequence of length $N_l$ containing the curves $\tilde{\gamma}_{j,l,k}$ for $1\leq j\leq n_l$, $1\leq k\leq N_{j,l}$, say in lexicographic order.
Then by construction and by \Cref{surgery-eta}\ref{item:SurgeryMeasureIdentity-eta}
\begin{equation}\label{Tbargamma}
  T(\omega) = \lim_{l\to\infty} \frac{\mathbb{M}(T)}{n_l \cdot l} \sum_{j=1}^{n_l}  [[\hat{\gamma}_{j,l} ]](\omega) 
  = \lim_{l\to\infty} \frac{\mathbb{M}(T)}{n_l \cdot l} \sum_{i=1}^{N_l}  [[\bar{\gamma}_{i,l} ]](\omega) 
  .
\end{equation}
Similarly, \Cref{surgery-eta}\ref{item:SurgeryTotalLength-eta}--\ref{item:SurgeryBallGrowth-eta} give
\begin{gather}
  \label{lengthlimsup}
  \limsup_{l\to\infty} \frac{1}{n_l\cdot l}\sum_{i=1}^{N_l} \length(\bar{\gamma}_{i,l})
  \leq (1+\tfrac{1}{2}\epsilon) \lim_{l\to\infty} \frac{1}{n_l\cdot l}\sum_{j=1}^{n_l} \length(\hat{\gamma}_{j,l}) 
  = 1+\tfrac{1}{2}\epsilon
  ,
  \\
  \label{MorreyBounded}
  \Morrey{\bar{\gamma}_{i,l}}\leq 4C'/\epsilon^2 \quad\text{for all $i,l$}
  .
\end{gather}

By construction, $N_l\geq n_l$, and hence $N_l\to\infty$ as $l\to\infty$.
Along with \eqref{lengthlimsup}, this shows that we can choose a subsequence $(l_r)_{r\in\mathbb{N}}$ such that 
\begin{equation}\label{ScaledLengthSum}
  \frac{1}{n_{l_r}\cdot l_r}\sum_{i=1}^{N_{l_r}} \length(\bar{\gamma}_{i,{l_r}}) \leq 1+\epsilon
  \quad\text{for all }r\in\mathbb{N}
\end{equation}
and such that $(N_{l_r})_{r\in\mathbb{N}}$ is strictly increasing.

With these preparations, we can now construct the curves $\gamma_{i,n}$ and scalars $\lambda_{i,n}$, $1\leq i\leq n$.
If $n=N_{l_r}$ for some (necessarily unique) $r$, then set 
\begin{equation*}
  \gamma_{i,n} = \bar{\gamma}_{i,l_r}
  , 
  \quad
  \lambda_{i,n} = \frac{\mathbb{M}(T) \cdot \length(\bar{\gamma}_{i,l_r})}{n_{l_r}\cdot l_r}
  ,
  \quad 1\leq i\leq N_{l_r}
  .
\end{equation*}
If $N_{l_r}<n<N_{l_{r+1}}$, extend the sequences in a trivial way by setting $(\lambda_{i,n}, \gamma_{i,n})=(\lambda_{i,{N_{l_r}}}, \gamma_{i,{N_{l_r}}})$ for $i\leq N_{l_r}$ and $\lambda_{i,n}=0$ for $i>N_{l_r}$.
Similarly, if $n<N_{l_1}$ then set $\lambda_{i,n}=0$ for all $i$.
In the latter two cases with $\lambda_{i,n}=0$, the curves $\gamma_{i,n}$ are irrelevant; for definiteness, they can be taken to be curves of zero length, so that $\Morrey{\gamma_{i,n}}\leq 4C'/\epsilon^2$ holds trivially.

If $N_{l_r}\leq n<N_{l_{r+1}}$ then 
\begin{align*}
  \sum_{i=1}^n |\lambda_{i,n}| = \sum_{i=1}^{N_{l_r}} \lambda_{i,N_{l_r}} = \sum_{i=1}^{N_{l_r}} \frac{\mathbb{M}(T) \cdot \length(\bar{\gamma}_{i,l,r})}{n_{l_r}\cdot l_r} \leq (1+\epsilon)\mathbb{M}(T)
\end{align*}
by \eqref{ScaledLengthSum}.
Similarly 
\begin{align*}
  \lim_{n \to \infty} \sum_{i=1}^n \lambda_{i,n} \frac{[[\gamma_{i,n}]](\omega)}{\length(\gamma_{i,n})} 
  = \lim_{r \to \infty} \sum_{i=1}^{N_{l_r}} \frac{\mathbb{M}(T)}{n_{l_r}\cdot l_r} [[\gamma_{i,n}]](\omega)
  = T(\omega)
\end{align*}
by taking the limit \eqref{Tbargamma} along a subsequence.
Finally \eqref{MorreyBounded} shows that the curves $\gamma_{i,n}$ have uniformly bounded Morrey norms, and in \Cref{approximationMetric} we may take $C=4C'$ where $C'$ is the universal constant from \Cref{surgery-eta}.
\end{proof}

\appendix

\section{Separability and Continuity Results for Lipschitz Functions}\label{appendix}

\begin{lemma}\label{Lipschitz_separable}
  Let $E$ be a complete separable metric space.  
  The space of real-valued Lipschitz functions on $E$, $\Lip(E,\mathbb{R})$, is separable in the topology of pointwise convergence and convergence of Lipschitz constants.
  That is, there exists a countable set $\{\pi_n\}_{n \in \mathbb{N}} \subset \Lipb(E,\mathbb{R})$ such that for every $\pi \in \Lip(E,\mathbb{R})$ there exists a subsequence $\{\pi_{n_k}\}_{k \in \mathbb{N}} \subset \{\pi_n\}_{n \in \mathbb{N}} $ such that
  \begin{gather*}
    \text{for every $x \in E$,}
    \quad
    \pi_{n_k}(x) \to \pi(x)
    \quad
    \text{as $k \to \infty$, and}
    \\
    \Lip(\pi_{n_k}) \to \Lip(\pi)
    \quad\text{as $k \to \infty$}
    .
  \end{gather*}
  Furthermore we may choose the subsequence so that $\Lip(\pi_{n_k}) \leq \Lip(\pi)$ for all $k$.
  If in addition $\pi \in \Lipb(E,\mathbb{R})$, then we may choose the subsequence such that
  \begin{align*}
    \norm{\pi_{n_k}}_\infty\leq\norm{\pi}_\infty
    ,
    \qquad
    \|\pi_{n_k}\|_\infty \to \|\pi\|_\infty
    \quad\text{as $k\to\infty$}
    .
  \end{align*}
\end{lemma}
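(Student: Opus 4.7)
The plan is to take as $\{\pi_n\}$ a countable family of clipped McShane--Whitney extensions with rational parameters, and then, given $\pi\in\Lip(E,\mathbb{R})$ with $L=\Lip(\pi)$, to approximate not $\pi$ itself but a rescaled copy $r_k\pi$ whose Lipschitz constant $r_kL$ is chosen rational. This scaling trick enforces the inequalities $\Lip(\pi_{n_k})\le L$ and (in the bounded case) $\|\pi_{n_k}\|_\infty\le M_0$ exactly, rather than only up to an $o(1)$ error. Fix a countable dense subset $E_\mathbb{Q}\subset E$ and define $\mathcal{F}$ to be the countable collection of all functions
\begin{equation*}
  g_{F,q,L',M}(x) := \max\Bigl(-M,\min\bigl(M,\min_{y\in F}(q(y)+L'\,d(x,y))\bigr)\Bigr)
\end{equation*}
together with their negatives and all rational constants, indexed over finite non-empty $F\subset E_\mathbb{Q}$, $q\colon F\to\mathbb{Q}$, and $L',M\in\mathbb{Q}\cap(0,\infty)$. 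Each such $g_{F,q,L',M}$ lies in $\Lipb(E,\mathbb{R})$ with $\Lip(g_{F,q,L',M})\le L'$ and $\|g_{F,q,L',M}\|_\infty\le M$, and $\mathcal{F}$ is countable; enumerate it as $\{\pi_n\}_{n\in\mathbb{N}}$.

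For the bounded case $\pi\in\Lipb(E,\mathbb{R})$ with $L>0$ and $M_0=\|\pi\|_\infty$, I would pick rational $L'_k\in(0,L)$ with $L'_k\to L$ and set $r_k:=L'_k/L\in(0,1)$. The identity
\begin{equation*}
  \min_{y\in F}\bigl(r_k\pi(y)+L'_k\,d(x,y)\bigr) \;=\; r_k\min_{y\in F}\bigl(\pi(y)+L\,d(x,y)\bigr)
\end{equation*}
exhibits the $L'_k$-McShane upper extension of $r_k\pi$ as exactly $r_k$ times the genuine $L$-McShane upper extension of $\pi$. Enumerating $E_\mathbb{Q}=\{y_1,y_2,\dotsc\}$, take $F_k=\{y_1,\dotsc,y_{N_k}\}$ with $N_k\to\infty$, pick rational $\tilde q_k(y)$ within $1/k^2$ of $r_k\pi(y)$ for each $y\in F_k$, and pick rational $\tilde M_k\in(r_kM_0,M_0)$ (non-empty since $r_k<1$) with $\tilde M_k\to M_0$. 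Setting $\pi_{n_k}:=g_{F_k,\tilde q_k,L'_k,\tilde M_k}$ yields $\Lip(\pi_{n_k})\le L'_k\le L$ and $\|\pi_{n_k}\|_\infty\le\tilde M_k\le M_0$ directly from the construction.

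Pointwise convergence is verified by combining the $L$-Lipschitz lower bound $r_k\pi(y)+L'_k\,d(x,y)\ge r_k\pi(x)$ (which holds for every $y\in F_k$) with an upper bound obtained by evaluating at any $y^*\in F_k$ within distance $\epsilon$ of $x$ (such $y^*$ exists for all large $k$ by the density of $E_\mathbb{Q}$ and the fact that $F_k$ eventually contains every fixed $y_i$). A direct calculation then yields $|A_k(x)-r_k\pi(x)|=O(1/k)$, where $A_k(x):=\min_{y\in F_k}(\tilde q_k(y)+L'_k\,d(x,y))$; combined with $r_k\to 1$, this gives $\pi_{n_k}(x)\to\pi(x)$, with the clipping contributing at most an additional $O(1/k)$ error because $\tilde M_k>r_kM_0$. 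The convergence $\Lip(\pi_{n_k})\to L$ then follows from lower semicontinuity of $\Lip$ under pointwise convergence combined with $\Lip(\pi_{n_k})\le L'_k\to L$, and $\|\pi_{n_k}\|_\infty\to M_0$ follows by invoking pointwise convergence at points $x^*\in E$ where $|\pi(x^*)|$ approaches $M_0$.

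For unbounded $\pi$, apply the bounded construction to each truncation $\pi^{(M)}=\max(-M,\min(M,\pi))\in\Lipb(E,\mathbb{R})$ and diagonalize over a sequence $M\to\infty$; since $\pi^{(M)}\to\pi$ pointwise and $\Lip(\pi^{(M)})\to L$, the resulting subsequence satisfies the required properties (the $\|\cdot\|_\infty$ control is not required here). The main obstacle is enforcing the strict inequality $\Lip(\pi_{n_k})\le\Lip(\pi)$ when $L$ is irrational: the naive McShane extension with rational $L'<L$ does not interpolate $\pi$ at the sample points, so it can deviate from $\pi$ by an amount of order $(L-L')$ times the diameter of the sample set, an error that need not vanish as more sample points are added. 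The scaling trick sidesteps this by shifting the approximation target to $r_k\pi$, whose Lipschitz constant matches the rational $L'_k$ exactly; the standard McShane estimates then apply to produce a faithful $L'_k$-Lipschitz approximation of $r_k\pi$, and $r_k\to 1$ recovers $\pi$ in the limit.
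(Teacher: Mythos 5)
Your proposal is correct and is in the same spirit as the paper's proof: both build a countable family of clipped McShane--Whitney extensions with rational parameters and then shrink the target slightly to create enough slack that rational rounding of the sample values cannot push the Lipschitz constant above $\Lip(\pi)$. The differences are modest but worth flagging. The paper works with the \emph{lower} (max-form) extension $\tau_M\bigl(\max_j(c_j - C\,d(x,e_j))\bigr)$ and creates slack \emph{asymmetrically}: it deflates only the sample values by $(1-\epsilon_k)$ while keeping $C_k$ within $\tfrac{1}{2}\epsilon_k\Lip(\pi)$ of $\Lip(\pi)$; the resulting wedge of room is exactly what lets them show the approximant \emph{interpolates} $c_{j,k}$ at the sample points, after which pointwise convergence follows from a single application of the Lipschitz comparison \eqref{density_replacement}. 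You instead use the upper (min-form) extension and rescale the whole function by $r_k=L'_k/L$, so that the rescaled target has exactly the rational Lipschitz constant $L'_k$; you then bypass exact interpolation with a two-sided sandwich bound $|A_k(x)-r_k\pi(x)|=O(1/k)$ plus 1-Lipschitzness of the truncation $\tau_{\tilde M_k}$. Your multiplicative rescaling is arguably the cleaner device; the cost is that you need a separate truncation-and-diagonalization step for unbounded $\pi$ (and an implicit special case for $L=0$, where $r_k$ is undefined), whereas the paper's parameters are set up so a single pass handles both bounded and unbounded $\pi$, with the constant-$\pi$ case peeled off at the start. Either way the substance — McShane extension on a growing finite sample of $E_\mathbb{Q}$, slight shrinkage to absorb rational rounding, lower semicontinuity of $\Lip$ to upgrade $\leq$ to convergence — is the same.
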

\begin{proof}
If $E$ is finite then $\Lip(E,\mathbb{R})$ is a finite-dimensional vector space and the result is trivial.
Otherwise, let $E_\mathbb{Q}$ denote a countably infinite dense subset of $E$, enumerated as $E_\mathbb{Q}=\set{e_j}_{j \in \mathbb{N}}$ with $e_j\neq e_{j'}$ for all $j\neq j'$.

Write $\tau_M(t)=\max\{-M, \min\{t,M\}\}$ for the signed truncation of a real number by the number $M$.
The desired family $\set{\pi_n}_{n\in\mathbb{N}}$ consists of functions of the form
\begin{equation*}
  g_{l;c_1,\dotsc,c_l;M,C}(x)=\tau_M \circ \left( \max_{1\leq j\leq l} (c_j - C d(x,e_j)) \right)
\end{equation*}
over the countable collection of indices $l\in\mathbb{N}$, $(c_1,\dotsc,c_l)\in\mathbb{Q}^l$, $M,C\in\mathbb{Q}\cap\cointerval{0,\infty}$.
By construction, $g_{l;c_1,\dotsc,c_l;M,C}\in\Lipb(E,\mathbb{R})$ with $\norm{g_{l;c_1,\dotsc,c_l;M,C}}_\infty\leq M$ and $\Lip(g_{l;c_1,\dotsc,c_l;M,C})\leq C$.

We remark that if the metric $d$ is bounded and the values $\pi(e_1),\dotsc,\pi(e_l)$ and $\Lip(\pi)$ are rational, then by choosing $c_j=\pi(e_j)$, $C=\Lip(\pi)$, and $M$ sufficiently large, the function $g_{l;c_1,\dotsc,c_l;M,C}$ coincides with the McShane-Whitney Lipschitz extension \cites{McShane, Whitney} of the restriction $\pi\vert_{\set{e_1,\dotsc,e_l}}$.
As we next explain, the general case can be handled specifying suitably close rational approximations.

Let $\pi\in\Lip(E,\mathbb{R})$ be given.
If $\norm{\pi}_\infty=0$ or $\Lip(\pi)=0$ then $\pi$ is constant, say $\pi(x)=y$ for all $x\in E$, and the result is trivial: we may choose $y_k\in\mathbb{Q}$ such that $\abs{y_k}\leq\abs{y}$ and $y_k\to y$, and then $\pi_{n_k}=\pi_{1;y_k;\abs{y_k},0}$ gives the desired sequence.

Otherwise, fix a sequence $(\epsilon_k)_{k\in\mathbb{N}}$ with $0<\epsilon_k<1$, $\epsilon_k\to 0$ as $k\to\infty$.
Define
\begin{gather*}
  r_k=\min\set{1,\min\set{d(e_j,e_{j'})\colon j,j'\in\set{1,\dotsc,k}, j\neq j'}}
  ,
  \\
  \delta_k = \epsilon_k \min\set{\norm{\pi}_\infty, \tfrac{1}{4}r_k\Lip(\pi)}
  ;
\end{gather*}
note that $0<r_k<\infty$ and $\delta_k>0$ for all $k$, and $\delta_k\to 0$ as $k\to\infty$ even if $\pi$ is unbounded.
Then we may choose $C_k\in\mathbb{Q}\cap\cointerval{0,\infty}$ and $c_{j,k}\in\mathbb{Q}$, $j\in\set{1,\dotsc,k}$, such that
\begin{gather*}
  (1-\tfrac{1}{2}\epsilon_k)\Lip(\pi) \leq C_k \leq \Lip(\pi)
  ,
  \\
  \abs{c_{j,k} - (1-\epsilon_k)\pi(e_j)} \leq \delta_k
  ,
\end{gather*}
and set $M_k=\max\set{\abs{c_{j,k}}\colon j\in\set{1,\dotsc,k}}$ and $\pi_{n_k}=g_{k;c_{1,k},\dotsc,c_{k,k};M_k,C_k}$.
By construction, $\Lip(\pi_{n_k})\leq C_k\leq\Lip(\pi)$, as claimed.
If $\pi$ is bounded, then from $\abs{c_{j,k}}\leq (1-\epsilon_k)\pi(e_j)+\delta_k\leq (1-\epsilon_k)\norm{\pi}_\infty+\epsilon_k\norm{\pi}_\infty$ we obtain $M_k\leq \norm{\pi}_\infty$ and hence $\norm{\pi_{n,k}}_\infty\leq\norm{\pi}_\infty$, also as claimed.

We next show that $\pi_{n_k}(e_{j'})=c_{j',k}$ for $j'\in\set{1,\dotsc,k}$.
To this end note that for $j,j'\in\set{1,\dotsc,k}$ distinct,
\begin{align*}
  \abs{c_{j',k}-c_{j,k}} &\leq 2\delta_k + (1-\epsilon_k)\abs{\pi(e_{j'})-\pi(e_j)}
  \\&
  \leq \tfrac{1}{2}\epsilon_k r_k\Lip(\pi) + (1-\epsilon_k)\Lip(\pi)d(e_{j'},e_j)
  \\&
  \leq \tfrac{1}{2}\epsilon_k d(e_{j'},e_j)\Lip(\pi) + (1-\epsilon_k)\Lip(\pi)d(e_{j'},e_j)
  \\&
  = (1-\tfrac{1}{2}\epsilon_k)\Lip(\pi)d(e_{j'},e_j) 
  \\&
  \leq C_k d(e_{j'},e_j)
\end{align*}
by the choice of $C_k$.
In particular, $c_{j',k}=c_{j',k}-C_k d(e_{j'},e_{j'})\geq c_{j,k}-C_k d(e_{j'},e_j)$, so that the maximum defining $g_{k;c_{1,k},\dotsc,c_{k,k};M_k,C_k}(e_{j'})$ is attained at $j=j'$.
Thus 
\begin{equation*}
  \pi_{n_k}(e_{j'})=g_{k;c_{1,k},\dotsc,c_{k,k};M_k,C_k}(e_{j'})=c_{j',k}
  ,
  \quad
  j'\in\set{1,\dotsc,k}
  ,
\end{equation*}
as claimed.

By construction, $\lim_{k\to\infty, k\geq j}c_{j,k}=\pi(e_j)$ for all $j$, and hence $\lim_{k\to\infty}\pi_{n_k}(e_j)=\pi(e_j)$ for all $j$.
For other $x\in E$, the general identity
\begin{align}\label{density_replacement}
  \abs{\pi(x) - \pi'(x)} \leq \inf_{q\in Q} \left( \abs{\pi(q) - \pi'(q)} + (\Lip(\pi)+\Lip(\pi'))d(x,q) \right),
\end{align}
applied with $Q=\set{e_1,\dotsc,e_l}$ gives
\begin{equation*}
  \limsup_{k\to\infty} \abs{\pi(x)-\pi_{n_k}(x)} \leq 2\Lip(\pi) \min_{1\leq j\leq l} d(x,e_j)
  .
\end{equation*}
The left-hand side does not depend on $l$, so by taking $l\to\infty$ and using the density of $E_\mathbb{Q}$ we find $\lim_{k\to\infty} \pi_{n_k}(x)=\pi(x)$.

Finally, we have already shown $\Lip(\pi_{n_k})\leq\Lip(\pi), \norm{\pi_{n_k}}_\infty\leq\norm{\pi}_\infty$.
The reverse direction follows easily from pointwise convergence: take $x,y\in E$ distinct and note
\begin{equation*}
  \liminf_{k\to\infty}\Lip(\pi_{n_k}) \geq \liminf_{k\to\infty} \frac{\abs{\pi_{n_k}(x)-\pi_{n_k}(y)}}{d(x,y)} = \frac{\abs{\pi(x)-\pi(y)}}{d(x,y)}
  .
\end{equation*}
The left-hand side does not depend on $x,y$, so taking the supremum over $x\neq y$ gives $\Lip(\pi)\leq \liminf_{k\to\infty}\Lip(\pi_{n_k})\leq\limsup_{k\to\infty}\Lip(\pi_{n_k})\leq\Lip(\pi)$.
A similar argument shows $\norm{\pi_{n_k}}_\infty\to\norm{\pi}_\infty$.
\end{proof}

\begin{lemma}\label{pi_bounds}
For $\gamma \in \Theta(E)$, define
\begin{align*}
\psi_{m,Q,\epsilon,C}(\gamma) &:= C\sum_{i=1}^m \min \left\{ 2C \frac{\length(\gamma)}{m}, 2\epsilon+ 2C\left(d(\gamma(s_i),Q) + d(\gamma(s_{i-1}),Q)\right)  \right\}\\
  &\qquad+2C^2\frac{\length(\gamma)^2}{m}
  ,\\
  \tilde{\psi}_{m,Q,\epsilon,C}(\gamma)&:= 2C^2\frac{\length(\gamma)^2}{m} + C\frac{\length(\gamma)}{m} \sum_{i=1}^m \min\set{2C, \epsilon+ 2Cd(\gamma(s_i),Q)}
  ,
\end{align*}
where $m \in \mathbb{N}$, $s_i=i\length(\gamma)/m$ for $i=0,\ldots,m$, $\epsilon>0$, $C>0$, and $Q \subset E_\mathbb{Q}$ is a finite set.  Then
\begin{align}
\abs{ \big. [[\gamma]](f,\pi-\pi') } &\leq \psi_{m,Q,\epsilon,C}(\gamma) \label{pi_estimate}\\
\abs{ \big. [[\gamma]](f-f',\pi) } &\leq \tilde{\psi}_{m,Q,\epsilon,C}(\gamma) \label{f_estimate}
\end{align}
for all $f, f' \in \Lipb(E,\mathbb{R})$, $\pi,\pi' \in \Lip(E,\mathbb{R})$ such that
\begin{align}
\norm{f}_\infty, \norm{f'}_\infty, \Lip(f), \Lip(f') &\leq C \label{f_assumptions}\\
\Lip(\pi),\Lip(\pi') &\leq C \label{pi_assumptions}\\
\sup_{q \in Q} \abs{\pi(q) - \pi'(q)} &\leq \epsilon,\label{pi_on_Q}\\
\sup_{q \in Q} \abs{f(q) - f'(q)} &\leq \epsilon. \label{f_on_Q}
\end{align}
\end{lemma}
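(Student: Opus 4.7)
The plan is to start from the Riemann--Stieltjes representation \eqref{currentcurve} of $[[\gamma]](f,\pi)$ and compare each integral to a Riemann sum over the uniform partition $s_i=i\length(\gamma)/m$. Since $\gamma$ is parametrized by arc length, the compositions $f\circ\gamma$, $f'\circ\gamma$, $\pi\circ\gamma$, $\pi'\circ\gamma$ inherit Lipschitz constants bounded by $C$ from \eqref{f_assumptions}--\eqref{pi_assumptions}. On any subinterval $[s_{i-1},s_i]$ of length $\length(\gamma)/m$, the oscillation of one composition is at most $C\length(\gamma)/m$ and the total variation of another is also at most $C\length(\gamma)/m$ (or $2C\length(\gamma)/m$ for a difference), so the standard Riemann--Stieltjes error estimate produces an error of $2C^2\length(\gamma)^2/m^2$ per interval, summing to the $2C^2\length(\gamma)^2/m$ term that appears in both $\psi$ and $\tilde\psi$.

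For \eqref{pi_estimate}, I would replace $[[\gamma]](f,\pi-\pi')$ by the left-endpoint Riemann sum
\[
\sum_{i=1}^m f(\gamma(s_{i-1}))\bigl[(\pi-\pi')(\gamma(s_i))-(\pi-\pi')(\gamma(s_{i-1}))\bigr]
\]
at the cost of the $2C^2\length(\gamma)^2/m$ error above, and bound this sum using $|f(\gamma(s_{i-1}))|\leq C$ together with two estimates on each increment of $(\pi-\pi')\circ\gamma$: the Lipschitz estimate, yielding $2C\length(\gamma)/m$; and the estimate obtained from \eqref{pi_on_Q} via $|(\pi-\pi')(x)|\leq|(\pi-\pi')(q)|+2Cd(x,q)\leq\epsilon+2Cd(x,q)$ for any $q\in Q$, optimized over $q\in Q$ and combined with the triangle inequality, yielding $2\epsilon+2C[d(\gamma(s_i),Q)+d(\gamma(s_{i-1}),Q)]$. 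Taking the minimum term-by-term gives the first summand of $\psi_{m,Q,\epsilon,C}(\gamma)$ and completes \eqref{pi_estimate}.

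For \eqref{f_estimate}, I would instead use the right-endpoint Riemann sum $\sum_{i=1}^m (f-f')(\gamma(s_i))[\pi(\gamma(s_i))-\pi(\gamma(s_{i-1}))]$, again at the cost of the same Riemann--Stieltjes error, and bound it by $|\pi(\gamma(s_i))-\pi(\gamma(s_{i-1}))|\leq C\length(\gamma)/m$ together with the two estimates $|(f-f')(\gamma(s_i))|\leq 2C$ from \eqref{f_assumptions} and $|(f-f')(\gamma(s_i))|\leq\epsilon+2Cd(\gamma(s_i),Q)$ from \eqref{f_on_Q} plus Lipschitz continuity. The choice of right rather than left endpoints is made precisely so that only $d(\gamma(s_i),Q)$ appears, matching the definition of $\tilde\psi_{m,Q,\epsilon,C}$. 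No substantial obstacle arises: the argument is essentially careful bookkeeping of the uniform Riemann--Stieltjes approximation combined with two-sided bounds on $\pi-\pi'$ and $f-f'$ coming from their Lipschitz continuity and their smallness on $Q$.
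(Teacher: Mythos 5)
Your proposal is correct and follows essentially the same route as the paper: decompose the Riemann--Stieltjes integral into a sample-point Riemann sum plus an error bounded by $\Lip(f)\Lip(\pi)\ell(\gamma)^2/m$, then bound the main sum by taking the minimum of the Lipschitz estimate and the estimate obtained through $Q$. The only cosmetic difference is that for \eqref{pi_estimate} the paper leaves the sample point $s_i^* \in [s_{i-1},s_i]$ arbitrary (it is immaterial since one just bounds $|f|\le C$), while for \eqref{f_estimate} both you and the paper pick $s_i^*=s_i$ precisely so that only $d(\gamma(s_i),Q)$ appears, matching $\tilde\psi$.
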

\begin{proof}
For any $s_i^*\in[s_{i-1},s_i]$, we have
\begin{align}
  [[\gamma]](f,\pi) &= \int_0^{\length(\gamma)} f(\gamma(t)) \, d(\pi\circ\gamma)(t)
  \notag\\&
  = \sum_{i=1}^m f(\gamma(s_i^*)) (\pi(\gamma(s_i))-\pi(\gamma(s_{i-1}))) 
  \notag\\&\quad
  + \sum_{i=1}^m \int_{s_{l-1}}^{s_i} (f(\gamma(t))-f(\gamma(s_i^*))) \, d(\pi\circ\gamma)(t)
  .
  \label{RiemannStieltjesSum}
\end{align}
Since $\pi$ is parametrized by arc length it has Lipschitz constant bounded by 1, so that $\Lip(f\circ\gamma)\leq\Lip(f)$ and $\Lip(\pi\circ\gamma)\leq\Lip(\pi)$.
Hence each term in the second sum has absolute value bounded by $\Lip(f)\Lip(\pi)(s_i-s_{i-1})^2$, giving
\begin{equation}\label{mSumBound}
  \abs{\sum_{i=1}^m \int_{s_{l-1}}^{s_i} (f(\gamma(t))-f(\gamma(s_i^*))) \, d(\pi\circ\gamma)(t)} \leq \Lip(f)\Lip(\pi)\frac{\length(\gamma)^2}{m}
  .
\end{equation}

We first establish \eqref{pi_estimate}.
Replacing $\pi$ by $\pi-\pi'$ in \eqref{RiemannStieltjesSum}--\eqref{mSumBound} and noting that $\Lip(\pi-\pi')\leq\Lip(\pi)+\Lip(\pi')$,
\begin{align}
\abs{ \big. [[\gamma]](f,\pi-\pi') } &\leq \norm{f}_\infty \sum_{i=1}^m \abs{(\pi(\gamma(s_i))-\pi'(\gamma(s_i)))-(\pi(\gamma(s_{i-1}))-\pi'(\gamma(s_{i-1})))} \notag \\
&\quad +  \Lip(f) (\Lip(\pi)+\Lip(\pi'))\frac{\length(\gamma)^2}{m}\label{pi_minus_piprime}
.
\end{align}
For the summands in the right-hand side of \eqref{pi_minus_piprime}, we use two different bounds.
Under assumptions \eqref{pi_assumptions}--\eqref{pi_on_Q}, we may apply the bound \eqref{density_replacement} twice to obtain
\begin{align*}
  &\abs{(\pi(x)-\pi'(x)) - (\pi(y)-\pi'(y))} 
  \\&\quad
  \leq \inf_{q\in Q} \left( \abs{\pi(q) - \pi'(q)} + (\Lip(\pi)+\Lip(\pi'))d(x,q) \right)
  \\&\qquad
  +\inf_{q\in Q} \left( \abs{\pi(q) - \pi'(q)} + (\Lip(\pi)+\Lip(\pi'))d(y,q)\right)
  \\&\quad
  \leq 2\epsilon + 2C\left(d(x,Q) + d(y,Q)\right)
  .
\end{align*}
Alternatively, the Lipschitz bounds for $\pi,\pi'$ separately give
\begin{align*}
  \abs{(\pi(x)-\pi(y)) - (\pi'(x)-\pi'(y))} &\leq (\Lip(\pi)+\Lip(\pi')) d(x,y)
  \leq 2C d(x,y)
  .
\end{align*}
Applying the minimum of these two upper bounds to \eqref{pi_minus_piprime} gives
\begin{align*}
  &\abs{ \big. [[\gamma]](f,\pi-\pi') } 
  \\&\quad
  \leq \norm{f}_\infty \sum_{i=1}^m \min \left\{ 2C d(\gamma(s_i),\gamma(s_{i-1})),
  2\epsilon + 2C\left(d(\gamma(s_i),Q)+ d(\gamma(s_{i-1}),Q)\right) \right\}\\
  &\qquad+\Lip(f) (\Lip(\pi)+\Lip(\pi'))\frac{\length(\gamma)^2}{m},
\end{align*}
and \eqref{pi_estimate} follows from a further application of assumptions \eqref{f_assumptions}--\eqref{pi_assumptions} and the upper bound $d(\gamma(s_i),\gamma(s_{i-1}))\leq s_i-s_{i-1}=\frac{\length(\gamma)}{m}$.

For \eqref{f_estimate}, take $s_i^*=s_i$ in \eqref{RiemannStieltjesSum}--\eqref{mSumBound} with $f$ replaced by $f-f'$, noting that $\Lip(f-f')\leq\Lip(f)+\Lip(f')$ and $\Lip(\pi\circ\gamma)\leq\Lip(\pi)$.
Then, under assumptions \eqref{f_assumptions}--\eqref{pi_assumptions},
\begin{align}
  \abs{ \big. [[\gamma]](f-f',\pi) } &\leq \sum_{i=1}^m \abs{f(\gamma(s_i))-f'(\gamma(s_i))} \Lip(\pi)(s_i-s_{i-1}) 
  \notag\\&\quad 
  +  (\Lip(f)+\Lip(f')) \Lip(\pi)\frac{\length(\gamma)^2}{m}
  \notag\\&
  \leq \frac{C\length(\gamma)}{m} \sum_{i=1}^m \abs{f(\gamma(s_i))-f'(\gamma(s_i))} + 2C^2\frac{\length(\gamma)^2}{m}
  .
  \label{f_minus_fprime}
\end{align}
Apply \eqref{density_replacement} to $f,f'$ and use assumptions \eqref{f_assumptions} and \eqref{f_on_Q} to obtain
\begin{align*}
  |f(\gamma(s_i))-f'(\gamma(s_i))| &\leq \inf_{q\in Q} \left( \abs{f(q) - f'(q)} + (\Lip(f)+\Lip(f'))d(\gamma(s_i),q) \right)\\
  &\leq   \epsilon+ 2Cd(\gamma(s_i),Q) 
  .
\end{align*}
Alternatively, the uniform bounds for $f,f'$ separately give
\begin{equation*}
  \abs{f(\gamma(s_i))-f'(\gamma(s_i))} \leq \norm{f}_\infty+\norm{f'}_\infty \leq 2C
  .
\end{equation*}
Applying the minimum of these two upper bounds to \eqref{f_minus_fprime} gives \eqref{f_estimate}.
\end{proof}

\begin{lemma}\label{psi_vanishes}
Let $\overline{\eta}_l$ be a finite Borel measure on $\Theta_l(E)$.  
For $\psi_{m,Q,\epsilon,C}$, $\tilde{\psi}_{m,Q,\epsilon,C}$ the functions defined in \Cref{pi_bounds}, 
\begin{align}\label{mdeltaq}
  \lim_{m \to \infty} \lim_{Q \nearrow E_{\mathbb{Q}}} \lim_{\epsilon \to 0}  \int_{\Theta_l(E)} \psi_{m,Q,\epsilon,C}(\gamma) \, d\overline{\eta}_l (\gamma) =0
\end{align}
and 
\begin{align}\label{deltaq}
  \lim_{m \to \infty} \lim_{Q \nearrow E_{\mathbb{Q}}} \lim_{\epsilon \to 0}  \int_{\Theta_l(E)} \tilde{\psi}_{m,Q,\epsilon,C}(\gamma) \, d\overline{\eta}_l (\gamma) =0
  .
\end{align}
\end{lemma}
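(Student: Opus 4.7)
The plan is to evaluate the iterated limits from the inside out, applying dominated convergence at each stage. The key observation is that as written, the lemma is applied only to the Paolini--Stepanov measure $\overline{\eta}_l$, for which $\length(\gamma)=l$ holds $\overline{\eta}_l$-a.e.\ by \eqref{lengthlExactly}; under this identification the integrands are pointwise bounded by explicit constants, making all interchanges of limits and integrals legitimate. Throughout I would work with the explicit pointwise bounds
\begin{equation*}
  \psi_{m,Q,\epsilon,C}(\gamma) \leq 2C^2 l + \tfrac{2C^2 l^2}{m},
  \qquad
  \tilde{\psi}_{m,Q,\epsilon,C}(\gamma) \leq 2C^2 l + \tfrac{2C^2 l^2}{m},
\end{equation*}
obtained from taking the first argument of each min. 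These are constant and hence $\overline{\eta}_l$-integrable since $\overline{\eta}_l$ is finite.

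First I would take $\epsilon \to 0$ with $m$ and $Q$ fixed. For each $\gamma$ the integrands converge pointwise to
\begin{equation*}
  \psi_{m,Q,0,C}(\gamma) = C\sum_{i=1}^m \min\set{\tfrac{2Cl}{m},\; 2C\bigl(d(\gamma(s_i),Q)+d(\gamma(s_{i-1}),Q)\bigr)} + \tfrac{2C^2 l^2}{m}
\end{equation*}
and similarly for $\tilde{\psi}_{m,Q,0,C}(\gamma)$, and dominated convergence lets me pass this limit inside the integral.

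Next, for the limit $Q \nearrow E_\mathbb{Q}$: I would exhaust $E_\mathbb{Q}$ by an increasing sequence of finite sets $Q_n$. Since $E_\mathbb{Q}$ is dense in $E$, $d(\gamma(s_i), Q_n)$ decreases monotonically to $d(\gamma(s_i),E_\mathbb{Q}) = 0$ for every $\gamma$ and every $i$. Hence each min term in the two sums tends pointwise to $0$, dominated uniformly by $2Cl/m$ (respectively $2C$), and dominated convergence gives
\begin{equation*}
  \lim_{Q\nearrow E_\mathbb{Q}} \int_{\Theta_l(E)} \psi_{m,Q,0,C}(\gamma)\, d\overline{\eta}_l(\gamma)
  = \int_{\Theta_l(E)} \tfrac{2C^2 l^2}{m}\, d\overline{\eta}_l(\gamma)
  = \tfrac{2C^2 l^2}{m}\, \overline{\eta}_l(\Theta_l(E)),
\end{equation*}
and the same calculation works verbatim for $\tilde{\psi}$.

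Finally, taking $m \to \infty$ sends $2C^2 l^2 \overline{\eta}_l(\Theta_l(E))/m$ to $0$, which gives both \eqref{mdeltaq} and \eqref{deltaq}. The only step requiring thought is justifying the uniform integrable bound on the integrands; this reduces to confirming that $\length(\gamma)$ is $\overline{\eta}_l$-essentially equal to $l$ (or at least in $L^2(\overline{\eta}_l)$), which is precisely the content of \eqref{lengthlExactly} for the measure appearing in the subsequent application. With that in hand the entire argument is a straightforward triple application of dominated convergence together with density of $E_\mathbb{Q}$ in $E$.
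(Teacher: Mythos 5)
Your argument is correct and matches the paper's proof in essentially all details: iterated dominated convergence, with the $\min$ structure supplying the dominating function, density of $E_\mathbb{Q}$ killing the distance terms pointwise, and the $1/m$ factor on the residual term finishing the argument. You also correctly flag the subtlety that the dominating bound needs $\length(\gamma)$ to be essentially bounded by $l$ — which the paper's proof invokes only implicitly with the phrase ``restricted to curves $\gamma$ of length at most $l$,'' and which is not automatic from $\gamma\in\Theta_l(E)$ (since $\mathbb{M}([[\gamma]])=l$ only forces $\length(\gamma)\geq l$) but is supplied by \eqref{lengthlExactly} for the Paolini--Stepanov measure in the application.
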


\begin{proof}
Restricted to curves $\gamma$ of length at most $l$, both functions satisfy the uniform upper bound
\begin{equation*}
  \max\set{\psi_{m,Q,\epsilon,C}(\gamma),\tilde{\psi}_{m,Q,\epsilon,C}(\gamma)} \leq 2C^2 l+2C^2 l^2
  .
\end{equation*}
For each fixed $x\in E$, the fact that $E_\mathbb{Q}$ is a dense subset implies the pointwise convergence $d(x,Q)\to 0$ as $Q\nearrow E_\mathbb{Q}$, and it follows that for each fixed $\gamma\in\Theta_l(E)$ there is the pointwise convergence $\psi_{m,Q,\epsilon,C}(\gamma),\tilde{\psi}_{m,Q,\epsilon,C}(\gamma)\to 2C^2 \length(\gamma)^2/m$ as $\epsilon\to 0,Q\nearrow E_\mathbb{Q}$.
Since $\overline{\eta}_l$ is a finite measure, Lebesgue's dominated convergence theorem gives
\begin{align*}
  \lim_{Q \nearrow E_{\mathbb{Q}}} \lim_{\epsilon \to 0}  \int_{\Theta_l(E)} \psi_{m,Q,\epsilon,C}(\gamma) \, d\overline{\eta}_l (\gamma) &= \frac{2C^2 l^2}{m} \overline{\eta}_l(\Theta_l(E))
\end{align*}
and similarly for $\tilde{\psi}_{m,Q,\epsilon,C}(\gamma)$.
The further limit $m\to\infty$ yields 0, as claimed.
\end{proof}

\begin{lemma}\label{psiSampling}
  Let $\gamma\in\Theta(E)$, with piecewise-geodesic sampling $\gamma^\delta$ as in \Cref{sampling}.
  Then for all admissible choices of $m,Q,\epsilon,C$,
  \begin{equation*}
    \lim_{\delta\to 0} \psi_{m,Q,\epsilon,C}(\gamma^\delta)=\psi_{m,Q,\epsilon,C}(\gamma)
    ,\quad
    \lim_{\delta\to 0} \tilde{\psi}_{m,Q,\epsilon,C}(\gamma^\delta)=\psi_{m,Q,\epsilon,C}(\gamma)
    .
  \end{equation*}
\end{lemma}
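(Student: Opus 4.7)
The plan is to show that the functions $\psi_{m,Q,\epsilon,C}$ and $\tilde{\psi}_{m,Q,\epsilon,C}$ depend continuously on $\length(\gamma)$ and on the sample points $\gamma(s_i)$ (where $s_i = i\length(\gamma)/m$), and then to establish that $\length(\gamma^\delta) \to \length(\gamma)$ and $\gamma^\delta(s_i^\delta) \to \gamma(s_i)$ in $E$ for each $i$ as $\delta \to 0$, where $s_i^\delta = i\length(\gamma^\delta)/m$.

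First I would verify that $\length(\gamma^\delta) \to \length(\gamma)$ as $\delta \to 0$. By \Cref{sampling}, $\length(\gamma^\delta) = \sum_{j=1}^k d(\gamma(t_{j-1}), \gamma(t_j))$ is a polygonal approximation of $\gamma$ with mesh $\delta$, and it is standard that such approximations of a Lipschitz (rectifiable) curve converge to the arc length as $\delta \to 0$. In particular, $s_i^\delta \to s_i$ for each $i$.

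Next, to establish $\gamma^\delta(s_i^\delta) \to \gamma(s_i)$, I would build the natural reparametrization $\phi^\delta \colon [0, \length(\gamma^\delta)] \to [0, \length(\gamma)]$ sending $u_j := \sum_{l \leq j} d(\gamma(t_{l-1}), \gamma(t_l))$ to $t_j$, with linear interpolation in between. For $u \in [u_{j-1}, u_j]$, both $\gamma^\delta(u)$ and $\gamma(\phi^\delta(u))$ lie within distance $\delta$ of $\gamma(t_{j-1})$, which gives $d(\gamma^\delta(u), \gamma(\phi^\delta(u))) \leq 2\delta$ (this is essentially what underlies the proof of \Cref{GeodesicSampledCurrent}). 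The per-piece ``defect'' $(t_j - t_{j-1}) - (u_j - u_{j-1}) \geq 0$ sums telescopically to $\length(\gamma) - \length(\gamma^\delta)$, and a short computation then shows $|\phi^\delta(u) - u| \leq \length(\gamma) - \length(\gamma^\delta)$ uniformly in $u$. Combined with the $1$-Lipschitz property of $\gamma$ (as an arc-length parametrized curve) and the bound $|s_i^\delta - s_i| \leq \length(\gamma) - \length(\gamma^\delta)$, this yields $d(\gamma^\delta(s_i^\delta), \gamma(s_i)) \leq 2\delta + 2(\length(\gamma) - \length(\gamma^\delta))$, which tends to $0$.

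Finally, I would invoke continuity of each ingredient appearing in the definitions of $\psi$ and $\tilde{\psi}$: the function $d(\cdot, Q)$ is $1$-Lipschitz so $d(\gamma^\delta(s_i^\delta), Q) \to d(\gamma(s_i), Q)$, the minimum is jointly continuous, and $\length(\gamma^\delta) \to \length(\gamma)$ as established. Since each sum ranges over the finite set $i = 1, \ldots, m$, the two claimed convergences follow at once. The main minor obstacle I anticipate is the possibility of degenerate geodesic pieces with $u_j = u_{j-1}$, which would force $\phi^\delta$ to be only weakly increasing rather than strictly so; this can be sidestepped by consolidating degenerate pieces with their neighbors before defining $\phi^\delta$, or by viewing $\phi^\delta$ as a correspondence valid in the pseudometric infimum defining $d_\Theta$. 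Beyond that bookkeeping, the argument is pure continuity.
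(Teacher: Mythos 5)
Your proof is correct and follows essentially the same route as the paper's (very terse) proof: the paper simply observes that $\gamma^\delta(i\length(\gamma^\delta)/m)\to\gamma(i\length(\gamma)/m)$ and that $\psi,\tilde\psi$ are continuous functions of these sample values (and, implicitly, of $\length(\gamma^\delta)\to\length(\gamma)$), whereas you supply the details: the standard fact that inscribed polygonal lengths converge, the explicit reparametrization $\phi^\delta$ with the quantitative bound $d(\gamma^\delta(s_i^\delta),\gamma(s_i))\leq 2\delta+2(\length(\gamma)-\length(\gamma^\delta))$, and the note about degenerate pieces. Nothing is missing; you have simply filled in what the paper leaves to the reader.
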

\begin{proof}
The result follows by observing that $\gamma^\delta(i\length(\gamma^\delta)/m)\to\gamma(i\length(\gamma)/m)$ as $\delta\to 0$, for all $i\in\set{0,\dotsc,m}$, and that $\psi_{m,Q,\epsilon,C},\tilde{\psi}_{m,Q,\epsilon,C}$ can be expressed as continuous functions of these values.
\end{proof}

\begin{lemma}\label{etalCopieseta1}
  Let $h\colon \Theta_1(E)\to\mathbb{R}$ be a bounded measurable function, let $l\in\mathbb{N}$, and let $j\in\set{1,\dotsc,l}$.
  Then
  \begin{equation*}
    \int_{\Theta_l(E)} h(\gamma\vert_{[j-1,j]}) \, d\overline{\eta}_l(\gamma)
    = \int_{\Theta_1(E)} h(\gamma) \, d\overline{\eta}_1(\gamma)
    .
  \end{equation*}
\end{lemma}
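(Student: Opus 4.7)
The plan is to recognize that the statement is essentially a direct translation, through the change-of-variables formula for pushforward measures, of the marginal-distribution identity attributed to Paolini--Stepanov in the discussion preceding \eqref{weaklengthl}--\eqref{lengthlExactly}. Recall that the excerpt states: ``According to \cite{PS}*{Proposition~4.2(b)}, the marginal distribution under $\overline{\eta}_l$ of each subcurve $\gamma|_{[j-1,j]}$, $j\in\set{1,\dotsc,l}$, coincides with $\overline{\eta}_1$.'' This is exactly the pushforward statement we need.

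First, I would set up notation. Define the restriction map $R_j\colon \Theta_l(E)\to\Theta_1(E)$ by $R_j(\gamma)=\gamma|_{[j-1,j]}$, interpreted modulo reparametrization as an element of $\Theta_1(E)$ (so we may assume arc-length parametrization on $[0,1]$). The key check is that $R_j$ is Borel measurable with respect to the topology induced by $d_\Theta$: this is straightforward because the natural map $C(\mathbb{R},E)\to \Theta_1(E)$ used by Paolini and Stepanov is continuous on the set where $\overline{\eta}_l$ concentrates (curves of length exactly $l$ parametrized by arc length), and $d_\Theta(R_j(\gamma),R_j(\tilde\gamma))\le d_\Theta(\gamma,\tilde\gamma)$ when both are restricted accordingly.

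Next, the cited Paolini--Stepanov marginal identity says precisely that the pushforward measure $(R_j)_*\overline{\eta}_l$ on $\Theta_1(E)$ equals $\overline{\eta}_1$. By the standard change-of-variables formula for pushforward measures, for any bounded measurable $h\colon\Theta_1(E)\to\mathbb{R}$,
\begin{equation*}
  \int_{\Theta_l(E)} h(R_j(\gamma))\,d\overline{\eta}_l(\gamma) = \int_{\Theta_1(E)} h(\gamma')\,d((R_j)_*\overline{\eta}_l)(\gamma') = \int_{\Theta_1(E)} h(\gamma')\,d\overline{\eta}_1(\gamma'),
\end{equation*}
which is exactly the claimed identity.

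The only genuinely delicate point is the measurability of $R_j$ and the verification that the Paolini--Stepanov construction really does yield $(R_j)_*\overline{\eta}_l=\overline{\eta}_1$ (as opposed to some superficially different but equivalent measure); both issues are already addressed in the exposition immediately preceding the lemma statement, where the authors recall that $\overline{\eta}_l$ is obtained as the image of $\hat{\eta}$ under $\gamma\in C(\mathbb{R},E)\mapsto \gamma|_{[0,l]}$, and then quote \cite{PS}*{Proposition~4.2(b)}. So the main (and only) obstacle is being careful about the measurability of the restriction map on $\Theta_l(E)$, after which the proof is a one-line invocation of the abstract change-of-variables formula.
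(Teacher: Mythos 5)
Your proof is correct and takes essentially the same approach as the paper: identify the lemma as the integral form of the pushforward identity $\pi_{[j-1,j]\#}\overline{\eta}_l=\overline{\eta}_1$ and invoke the change-of-variables formula. The paper's proof is slightly more explicit at the key step, deriving the pushforward identity from the shift-invariance $g^+_\#\hat{\eta}=\hat{\eta}$ in \cite{PS}*{Proposition~4.2} via the factorization $\pi_{[j-1,j]}=\pi_{[0,1]}\circ g^+\circ\dotsb\circ g^+$, whereas you quote the resulting marginal-distribution statement directly; both are valid, but if you wanted a self-contained argument rather than an appeal to the earlier exposition, you would want to spell out that factorization.
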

\begin{proof}
Modulo differences in notation, this follows directly from \cite{PS}*{Proposition~4.2}.
For $\gamma\in C(\mathbb{R},E)$, write $\pi_{[a,b]}(\gamma)=\gamma\vert_{[a,b]}$ and write $g^+(\gamma)$ for the function $\tilde{\gamma}$ given by $\tilde{\gamma}(t)=\gamma(t+1)$.
By construction, $\overline{\eta}_l$ is the image measure $\pi_{[0,l]\#}\hat{\eta}$, where $\hat{\eta}$ is the measure given by \cite{PS}*{Proposition~4.2} satisfying $g^+_\#\hat{\eta} = \hat{\eta}$.
Noting that $\pi_{[j-1,j]}\circ\pi_{[0,l]}=\pi_{[j-1,j]}=\pi_{[0,1]}\circ \underbrace{g^+\circ\dotsb\circ g^+}_{j-1\text{ times}}$, we obtain
\begin{equation*}
  \pi_{[j-1,j]\#}\overline{\eta}_l =\pi_{[j-1,j]\#}\hat{\eta}
  = \pi_{[0,1]\#} g^+_\# \dotsb g^+_\# \hat{\eta} = \pi_{[0,1]\#}\hat{\eta}=\overline{\eta}_1
\end{equation*}
and the statement of the lemma is the same conclusion in integral form.
\end{proof}

\begin{corollary}\label{etaRepeats}
  For all admissible choices of $m,Q,\epsilon,C$ and all $l\in\mathbb{N}$,
  \begin{align*}
    \int_{\Theta_l(E)} \frac{1}{l}\psi_{m\cdot l,Q,\epsilon,C}(\gamma) \, d\overline{\eta}_l (\gamma) &= 
    \int_{\Theta_1(E)} \psi_{m,Q,\epsilon,C}(\gamma) \, d\overline{\eta}_1 (\gamma)
    \\
    \int_{\Theta_l(E)} \frac{1}{l}\tilde{\psi}_{m\cdot l,Q,\epsilon,C}(\gamma) \, d\overline{\eta}_l (\gamma) &= 
    \int_{\Theta_1(E)} \tilde{\psi}_{m,Q,\epsilon,C}(\gamma) \, d\overline{\eta}_1 (\gamma)
  \end{align*}
\end{corollary}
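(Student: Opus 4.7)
The plan is to reduce the identities to an additivity property: for any $\gamma\in\Theta_l(E)$ parametrized by arc length (so $\length(\gamma)=l$), I will show the pointwise identities
\begin{align*}
  \psi_{m\cdot l,Q,\epsilon,C}(\gamma) &= \sum_{j=1}^l \psi_{m,Q,\epsilon,C}\bigl(\gamma\vert_{[j-1,j]}\bigr)
  ,
  \\
  \tilde{\psi}_{m\cdot l,Q,\epsilon,C}(\gamma) &= \sum_{j=1}^l \tilde{\psi}_{m,Q,\epsilon,C}\bigl(\gamma\vert_{[j-1,j]}\bigr)
  .
\end{align*}
Granting these identities, the corollary is immediate: integrate against $\overline{\eta}_l$ (the integrand is bounded on $\Theta_l(E)$ by the uniform bound noted in the proof of \Cref{psi_vanishes}), swap the finite sum with the integral, apply \Cref{etalCopieseta1} to each subcurve $\gamma\vert_{[j-1,j]}$ to rewrite each term as $\int_{\Theta_1(E)}\psi_{m,Q,\epsilon,C}\,d\overline{\eta}_1$ (respectively with $\tilde\psi$), then divide by $l$. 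Since $\overline{\eta}_l$-almost every $\gamma$ has $\length(\gamma)=l$ by \eqref{lengthlExactly}, restricting the integrand to $\length(\gamma)=l$ causes no loss.

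To verify the pointwise identities, I will simply unfold the definitions. For $\gamma\in\Theta_l(E)$, the sampling points in $\psi_{m\cdot l,Q,\epsilon,C}(\gamma)$ are $s_i=i\length(\gamma)/(m\cdot l)=i/m$ for $i=0,\dotsc,m\cdot l$. Writing $i=(j-1)m+k$ with $j\in\set{1,\dotsc,l}$ and $k\in\set{1,\dotsc,m}$, one has $\gamma(i/m)=\gamma\vert_{[j-1,j]}(k/m)$ because the subcurve $\gamma\vert_{[j-1,j]}$ is parametrized by arc length and starts at parameter $j-1$ in $\gamma$. Splitting the sum in the definition of $\psi_{m\cdot l,Q,\epsilon,C}(\gamma)$ along the blocks indexed by $j$ yields the sum of the ``min-sums'' appearing in $\psi_{m,Q,\epsilon,C}(\gamma\vert_{[j-1,j]})$, while the constant term $2C^2\length(\gamma)^2/(m\cdot l)=2C^2 l/m$ matches the $l$-fold sum of the constant terms $2C^2\length(\gamma\vert_{[j-1,j]})^2/m=2C^2/m$ (since each subcurve has length $1$). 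The verification for $\tilde{\psi}$ is analogous, with the prefactor $C\length(\gamma)/(m\cdot l)=C/m$ on the block of $m$ terms matching exactly the prefactor $C\length(\gamma\vert_{[j-1,j]})/m=C/m$ in $\tilde{\psi}_{m,Q,\epsilon,C}(\gamma\vert_{[j-1,j]})$, and again the quadratic constants summing correctly.

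There is no substantive obstacle here: the two functionals $\psi$ and $\tilde\psi$ were specifically designed so that the constants in front of $\length(\gamma)^2/m$ would accommodate this block-decomposition. The only care needed is to confirm that the sampling points of the ``fine'' decomposition of $\gamma$ line up exactly with the sampling points of each ``coarse'' subcurve, which is just a matter of parametrization. Once the pointwise block-additivity is established, the corollary follows from Fubini-type manipulations and a single application of \Cref{etalCopieseta1}.
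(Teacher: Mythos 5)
Your proposal matches the paper's proof essentially line for line: both establish the pointwise block-additivity $\psi_{m\cdot l,Q,\epsilon,C}(\gamma)=\sum_{j=1}^l\psi_{m,Q,\epsilon,C}(\gamma\vert_{[j-1,j]})$ via the reindexing $i=(j-1)m+k$, and then conclude by integrating and applying \Cref{etalCopieseta1}. Your explicit appeal to \eqref{lengthlExactly} to justify taking $\length(\gamma)=l$ for $\overline{\eta}_l$-a.e.\ $\gamma$ is a small additional care point that the paper leaves implicit.
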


\begin{proof}
Let $\gamma\in\Theta_l(E)$.
Recall the definition of $\psi_{m\cdot l,Q,\epsilon,C}(\gamma)$ from \Cref{pi_bounds}, with the points $s_i=il/(m\cdot l)=i/m$, $i=0,\dotsc,m\cdot l$.
Changing the indices of summation $i\in\set{1,\dotsc,m\cdot l}$ by $i=(j-1)m+k$ with $j\in\set{1,\dotsc,l}$ and $k\in\set{1,\dotsc,m}$,
\begin{align*}
  &\psi_{m\cdot l,Q,\epsilon,C}(\gamma) 
  \\&\quad
  = 
  C \sum_{i=1}^{m\cdot l} \min \left\{ 2C \frac{l}{m\cdot l}, 2\epsilon+ 2C\left(d(\gamma(i/m),Q) + d(\gamma((i-1)/m),Q)\right)  \right\}
  \\
  &\qquad
  +2C^2\frac{l^2}{m\cdot l}
  \\&
  = \sum_{j=1}^l \left[ C \sum_{k=1}^m \min \left\{ \frac{2C}{m}, 2\epsilon+ 2C\left(d(\gamma(j-1+\tfrac{k}{m}),Q) + d(\gamma(j-1+\tfrac{k-1}{m}),Q)\right)  \right\}
  \right.
  \\
  &\left.
  \qquad
  +\frac{2C^2}{m} \right]
  \\&
  =\sum_{j=1}^l \psi_{m,Q,\epsilon,C}(\gamma\vert_{[j-1,j]})
  .
\end{align*}
The result therefore follows from \Cref{etalCopieseta1}.
\end{proof}

\section*{Acknowledgements}
Y.-W. Chen is supported by the National Science and Technology Council of Taiwan under research grant number 113-2811-M-002-027.
J.~Goodman is supported by the Marsden Fund grants 20-UOO-079, 22-UOA-052, and 23-UOA-148, administered by the Royal Society Te Ap\=arangi, New Zealand.
D.~Spector is supported by the National Science and Technology Council of Taiwan under research grant numbers 110-2115-M-003-020-MY3/113-2115-M-003-017-MY3 and the Taiwan Ministry of Education under the Yushan Fellow Program.


\bibliographystyle{amsplain}

\end{document}